\newtheorem{theorem}{Theorem}[section]
\newtheorem{lemma}[theorem]{Lemma}
\newtheorem{definition}{Definition}
\newtheorem{corollary}[theorem]{Corollary}
\newtheorem{proposition}[theorem]{Proposition}
\newtheorem{remark}{Remark}
\newtheorem{assumption}[theorem]{Assumption}
\newtheorem{example}{Example}
\newcommand{\transpose}{\mathsf{T}} 
\newcommand*{\QEDB}{\hfill\ensuremath{\square}}
\newcommand{\ol}{\overline}
\newcommand{\bs}{\boldsymbol}
\newcommand{\ta}{\widetilde{a}}
\newcommand{\tb}{\widetilde{b}}
\newcommand{\tc}{\widetilde{c}}
\newcommand{\mbP}{\mathbb{P}}
\newcommand{\calC}{\mathcal{C}}
\newcommand{\calP}{\mathcal{P}}
\newcommand{\calS}{\mathcal{S}}
\newcommand{\mFJ}{\mathcal{F}_J}
\newcommand{\trans}{\mathsf{T}} 
\DeclareSymbolFont{bbold}{U}{bbold}{m}{n}
\DeclareSymbolFontAlphabet{\mathbbold}{bbold}
\newcommand\oprocendsymbol{\hbox{$\square$}}
\newcommand\oprocend{\relax\ifmmode\else\unskip\hfill\fi\oprocendsymbol}
\newcommand*{\QEDA}{\hfill\ensuremath{\blacksquare}}%
\newcommand{\QED}{\hfill \mbox{\raggedright \rule{.1in}{.1in}}}
\newenvironment{proof}{\vspace{1ex}\noindent{\itshape Proof:}\hspace{0.5em}}
{\hfill\QED\vspace{1ex}}
\begin{document}
\begin{frontmatter}
  \title{{\bf \LARGE Network Theoretic Analysis of Maximum a Posteriori Detectors for Optimal Input Detection}\tnoteref{t1}}\tnotetext[t1]{This material is based upon
    work supported in part by ARO award 71603NSYIP and in part by
    UCOP award LFR-18-548175.}

  
 \author[Riverside]{Rajasekhar Anguluri}\ead{ranguluri@engr.ucr.edu}  
  \author[Riverside]{Vaibhav Katewa}\ead{vkatewa@engr.ucr.edu} 
  \author[Pullman]{Sandip Roy}\ead{sroy@eecs.wsu.edu}  
  \author[Riverside]{Fabio Pasqualetti}\ead{fabiopas@engr.ucr.edu}  


\address[Riverside]{Department of Mechanical Engineering, University of California, Riverside, CA, USA}  
\address[Pullman]{School of Electrical Engineering and Computer Science, Washington State University, Pullman, WA, USA}     


\begin{abstract} 

 This paper considers maximum-a-posteriori (MAP) and linear discriminant based MAP detectors to detect changes in the mean and covariance of a stochastic input, driving specific network nodes, using noisy measurements from sensors non-collocated with the input nodes. 
We explicitly characterize both detectors' performance in terms of the network edge weights and input and sensor nodes' location. In the asymptotic measurement regime, when the input and measurement noise are jointly Gaussian, we show that the detectors' performance can be studied using the input to output gain of the system's transfer function matrix. Using this result, we obtain conditions for which the detection performance associated with the sensors on a given network cut is better (or worse) than that of the sensors associated with the subnetwork
induced by the cut and not containing the input nodes. Our results also provide structural insights into the sensor placement from a detection-theoretic viewpoint. We validate our theoretical findings via multiple numerical examples.
 \end{abstract}

 \begin{keyword}
   Statistical hypotheses testing  \sep mean detection \sep covariance detection \sep network systems\sep sensor placement
 \end{keyword}
\end{frontmatter}

\section{Introduction}

{ Security of cyber-physical networks is of timely and
  utmost importance \cite{FP-FD-FB:10y}. In recent years, researchers
  have proposed a number of model-based and heuristic approaches for
  detecting and mitigating attacks against the actuators and the
  sensors in the network (see \cite{HSS-DR-TE-VP-JQ:19} and the
  references therein). Despite the success of these studies in
  revealing the performance and the limitations of attack detection
  mechanisms, several challenges remain, particularly in
  distinguishing malicious signals from ambient data, selecting
  optimal sensor locations to maximize the detection performance
  \cite{SR-JAT-MX:16, LY-SR-SS:18}, and deriving simple graphical
  rubrics to readily evaluate and optimize network security
  \cite{RD-JAT-SR:15, RA-RD-SR-FP:16}.


  This study contributes to a growing research effort on
  characterizing dynamic properties of network systems, including
  observability, estimation, detection, and input-output behaviors
  \cite{SR-JAT-MX:16, SR-MX-SS:18, LY-SJ-BA:13, AV-CJ-MX-SR-SW:19}. In
  particular, we provide network theoretic insights into the detection
  of changes in the statistical properties of a stationary stochastic
  input driving certain network nodes, with the ultimate objective of
  informing the placement of sensors for detection.


  Our work is also associated with the recent studies on constrained
  sensor selection and actuator placement in network systems
  \cite{THS-FLC-JL:16, HZ-RA-SS:17, JAT-SR-YW:17}. It is also aligned
  with network-theoretic studies that consider metrics for detection
  and estimation
  \cite{SR-MX-SS:18,THS-FLC-JL:16,AV-CJ-MX-SR-SW:19,FP-SZ-FB:14,SZ-FP:17,HJV-MK-HL:17}. Compared
  to these works, we pursue an explicit characterization of the
  relationships between the detection performance of a set of
  sensors and the graphical structure of the system.


\smallskip 
\noindent
\textbf{Contributions:}\footnote {In a preliminary version of this
  paper \cite{RA-RD-SR-FP:16}, we considered a SISO system, and
  studied the MAP detector's performance for changes in mean of the
  stochastic input, assuming noiseless measurements. Instead, in this
  paper, we consider a MIMO system, and study the detector's
  performance for changes occurring in both mean and covariance, under
  noisy and noiseless measurements. In addition, this paper also
  includes results on networks with non-negative edge weights.}  The
main contributions of this work are as follows. First, we consider a
binary hypothesis testing problem for a discrete time Gaussian process
driving the linear network dynamics through certain network nodes. We
primarily consider the scenario where hypothesis on either the mean or
the covariance of the input process must be detected using the
measurements (possibly corrupted with white Gaussian noise) collected
from output nodes that are at least at a specified distance apart from
the input nodes. We characterize the \textit{maximum a posteriori}
(MAP) detector, and quantify its performance as a function of the gain
of the input-output transfer matrix of the network system. These
results are significant in their own rights. For instance, besides our
results, there are only limited works related to detecting changes in
the covariance of unknown input signals, and this problem is highly
relevant in the context of cyber-physical security.

Second, we study the MAP detector's performance as a function of the
sensors location. In the absence of noise, regardless of the network
structure and edge weights, we show that the performance of the
detector associated with a set of sensors forming a cut of the network
(nodes on the cut shall be referred as to cutset nodes) is as good as
the performance obtained by measuring all nodes of the subnetwork
identified by the cut and not containing the nodes affected by the
input nodes (referred as partitioned set nodes). Instead, in the
presence of noise, depending upon the transfer matrix gain between the
cutset nodes and the partitioned nodes, we show that the detection
performance of sensors on the cutset nodes may be better or worse than
those of sensors on the partitioned nodes. Finally, we demonstrate our
theoretical findings on Toeplitz line networks and some illustrative
numerical examples.

Our analysis leads to the interesting results that, depending on the network weights and structure, and the intensity of sensor noise, the detection performance may improve as the graphical distance between the input nodes and the sensors location increases. In fact, our results (i) inform the optimal positioning of sensors for the detection of failure of system components or malicious tampering modeled by unknown stochastic inputs, (ii) allow for the detection of unexpected changes of the system structure, because such changes would modify the original detection profile, and (iii) provide network design guidelines to facilitate or prevent measurability of certain network signals.}

\smallskip 
\noindent
\textbf{Mathematical notation:} The cardinality of a set $A$ is denoted by $\text{card}(A)$. The set of natural numbers, real numbers, and complex numbers are denoted as $\mathbb{N}$, $\mathbb{R}$, and $\mathbb{C}$, respectively. The eigenspectrum and the spectral radius of a matrix $M \in \mathbb{C}^{n \times n}$ are denoted by $\text{spec}(M)$ and $\overline{\lambda}(M)$, resp. A symmetric positive (resp. semi) definite matrix $M$ is denoted as $M\succ0$ (resp. $M\succeq0$). Instead, a non-negative matrix $M$ is denoted as $M \geq 0$. Let $M_1,\ldots,M_n$ be matrices of different dimensions, then $\mathrm{diag}(M_1, \cdots,M_N)$ represents a block diagonal matrix. The Kronecker product of $M_1$ and $M_2$ is denoted by $M_1\otimes M_2$. An $n\times n$ identity matrix is denoted by $I$ or $I_n$. The norm on the Banach space of matrix-valued functions, that are essentially bounded on the unit circle $\{z \in {\mathbb C}: |z|=1\}$, is defined as $\|F(z)\|_{\infty}:=\text{ess sup}\|F(z)\|_2$ \cite{GI-KMA-GS:93}. All finite dimensional vectors are denoted by bold faced symbols. The set $\{{\bf e}_1,\ldots, {\bf e}_n\}$ denotes the standard basis vectors of $\mathbb{R}^n$. Let ${\bf x}=(x_1,\ldots,x_n)^\transpose$ and ${\bf y}=(y_1,\ldots,y_n)^\transpose$. Then, ${\bf x}\leq {\bf y}$ if and only if $x_i\leq y_i$, $i=1,\ldots,n$. The probability of an event $\mathcal{E}$ is denoted by $\mathrm{Pr}[\mathcal{E}]$. The conditional mean and covariance of a random variable (vector) $X$ is denoted by $\mathbb{E}[X|\mathcal{E}]$ and $\mathrm{Cov}[X|\mathcal{E}]$, respectively. For $Z\sim\mathcal{N}(0,1)$, $Q_{\mathcal{N}}(\tau)$ denotes $\text{Pr}\left[Z\geq \tau\right]$. For $Y \sim \chi^2(p)$, central chi-squared distribution with $q$ degrees of freedom, $Q_{\chi^2}(p,\tau)$ denotes $\text{Pr}\left[Y\geq \tau\right]$. 

\section{Preliminaries and problem setup}\label{sec: problem setup and preiliminary notions}
Consider a network
represented by the digraph
$\mathcal{G}:=(\mathcal{V}, \mathcal{E})$, where
$\mathcal{V}:= \{1,\ldots,n\}$ and
$\mathcal{E}\subseteq \mathcal{V}\times \mathcal{V}$ are the node and edge sets. Let $g_{ij}\in \mathbb{R}$ be the
weight assigned to the edge $(i,j)\in \mathcal{E}$, and
define the \textit{weighted adjacency matrix} of $\mathcal{G}$ as
$G:=[g_{ij}]$, where $g_{ij}=0$ whenever $(i,j)\notin \mathcal{E}$. 
Let $\mathcal{K}:= \{k_1,\ldots,k_r\}\subseteq \mathcal{V}$ be the set of input nodes, which receive $r$ inputs. Let $w(i,j)$ denote a path on $\mathcal{G}$ from node $i$ to $j$, and let $|w(i,j)|$ be the number of edges of $w(i,j)$. Define the distance between input node set $\cal {K}$ and a set of nodes $\cal {S}\subseteq \cal{V}$ as $\text{dist}({\cal K}, {\cal S}):=\text{min}\{|w(i,j)|: i\in {\cal K}, j\in{\cal S}\}$. 

We associate to each node $i$ a state $x_i \in \mathbb{R}$, and let the network evolve with discrete linear dynamics
\begin{align}\label{eq: system}
{\bf x}[k+1]=G{\bf x}[k]+\Pi {\bf w}[k], 
\end{align}
where ${\bf x}=[ x_1 \cdots x_n]^T \in \mathbb{R}^n$ contains the states of the nodes at time $k \in \mathbb{N}$, ${\bf x}[0]\sim \mathcal{N}({\bf 0},\Sigma_0)$ is the initial state, and ${\bf w}[k] \in \mathbb{R}^{r}$ is the input vector. The input matrix $\Pi=[{\bf e}_{k_1}, \ldots, {\bf e}_{k_r}]$ indicates the location of the input nodes. The input ${\bf w}[k]$ be governed by one of the following two competing statistical hypotheses: 
\begin{align}\label{eq: input hypothesis}
\begin{split}
H_1: \quad & {\bf w}[k]\overset{\text{i.i.d}}{\sim} \mathcal{N}\left({\boldsymbol \mu_1},{ \Sigma_1}\right), \quad  k=0,1,\ldots,N,\\
H_2:  \quad & {\bf w}[k]\overset{\text{i.i.d}}{\sim} \mathcal{N}\left({\boldsymbol \mu_2},{ \Sigma_2}\right), \quad  k=0,1,\ldots,N,
\end{split}
\end{align}
where the moments $\boldsymbol \mu_i \in \mathbb{R}^r$ and $\Sigma_i \in \mathbb{R}^{r \times r}(\succ 0)$, $i\in \{1,2\}$, are completely known. In other words, the competing hypotheses are simple. However, the true hypothesis is assumed to be unknown over the interval $k=0,1,\ldots,N$. We are concerned with detecting the true hypothesis on the input signal, using measurements from the sensors that are not collocated with the input nodes. 

We assume that the nodes $\mathcal{J}:= \{j_1,\ldots,j_m\}\subseteq \mathcal{V}$ are accessible for sensor placement (one sensor for each node), if $\text{dist}({\cal K}, {\cal J})\geq d$, where $d \in \mathbb{N}$. We refer to $\cal J$ as the sensor set. The output of these sensors is given by 
\begin{align}\label{eq: measurement model}
{\bf y}_\mathcal{J}[k] & = C{\bf x}[k]+{\bf v}[k], 
\end{align}
where  $C=[{\bf e}_{j_1}, \ldots, {\bf e}_{j_m}]^\transpose$ and ${\bf v}[k]\sim \mathcal{N}({\bf 0},\sigma^2_vI)$. Let the process $\{{\bf x}[0], {\bf w}[0], {\bf v}[0], {\bf w}[1], {\bf v}[1], \ldots\}$ be uncorrelated. To detect the true hypothesis, we task sensors with a detector, which maps the following time aggregated measurements
\begin{align}\label{eq: measurements recorded}
{\bf Y}^\transpose_\mathcal{J}=\begin{bmatrix}
{\bf y}^\transpose_\mathcal{J}[1] & {\bf y}^\transpose_\mathcal{J}[2]& \cdots {\bf y}^\transpose_\mathcal{J}[N]
\end{bmatrix}, 
\end{align}
to a detected hypothesis $\widehat H$. We will consider the \textit{maximum a posteriori probability} (MAP) detector, which is given by the following decision rule:
\begin{align}\label{eq: MAP}
\text{Pr}(\{H_{2} \, \text{is true}\}|{\bf Y}_{\cal J})&\overset{\widehat H=H_{2}}{\underset{\widehat H= H_{1}}{\gtrless}}\text{Pr}(\{H_{1} \, \text{is true}\}|{\bf Y}_{\cal J}). 
\end{align}

For a predetermined set of input nodes $\cal K$, the focus of our analysis is to characterize the performance of the detector \eqref{eq: MAP}, in terms of the network's adjacency matrix $G$. The performance of the detector \eqref{eq: MAP} is measured by its error probability, which is given by 
\begin{align}\label{eq: general error probability}
\mathbb{P}_e(\cal J)& = \sum_{i \in \{1,2\}}\text{Pr}(\widehat{H}\ne H_i| \{H_{i} \, \text{is true}\})\pi_i. 
\end{align}
where $\pi_i=\text{Pr}(\{H_i \text{ is true}\})$ is the prior probability. 

For any sensor set $\cal J$ that satisfies $\text{dist}({\cal K}, {\cal J})\geq d$, one expects that the MAP detector's performance \eqref{eq: general error probability} is maximum when $\text{dist}({\cal K}, {\cal J})=d$. However, for certain network configurations, studies have shown that the gain of transfer function, which is closely related to the \textit{signal-to-noise} ratio (SNR) of a detector, is maximum when the input and output nodes are farther apart \cite{JA-SR:15}. Hence, it remains unclear whether the closeness of the sensors to the input nodes improves the performance of the detector. 

In this paper, we show that the graphical proximity indeed modulate the MAP detector's performance, for certain classes of the detection problems \eqref{eq: input hypothesis}. In particular, we characterize networks for which the detection performance obtained when sensors are located on a \textit{node cutset} is better (or worse) than the performance obtained when sensors are placed on nodes of the subnetwork induced by the \textit{node cutset} that does not contain the input nodes (precise statements are provided in Section \ref{sec: network_analysis}). See Fig \ref{fig: cutset} for an illustration of \textit{node cutset} and the subnetwork (partitioned nodes) induced by it.  

Throughout the paper, we will distinguish the performance of sensors with measurement noise $\sigma^2_v=0$ and without noise $\sigma^2_v>0$. The essential reason, as we will show later, is that only in the presence of noise, we have a scenario where the performance of a MAP detector associated with the cutset nodes may be worse than that of the partitioned nodes. We end this section with an example that shows the impact of noise on the detection performance. 

\begin{figure}
	\centering
	\includegraphics[width=0.9\linewidth]{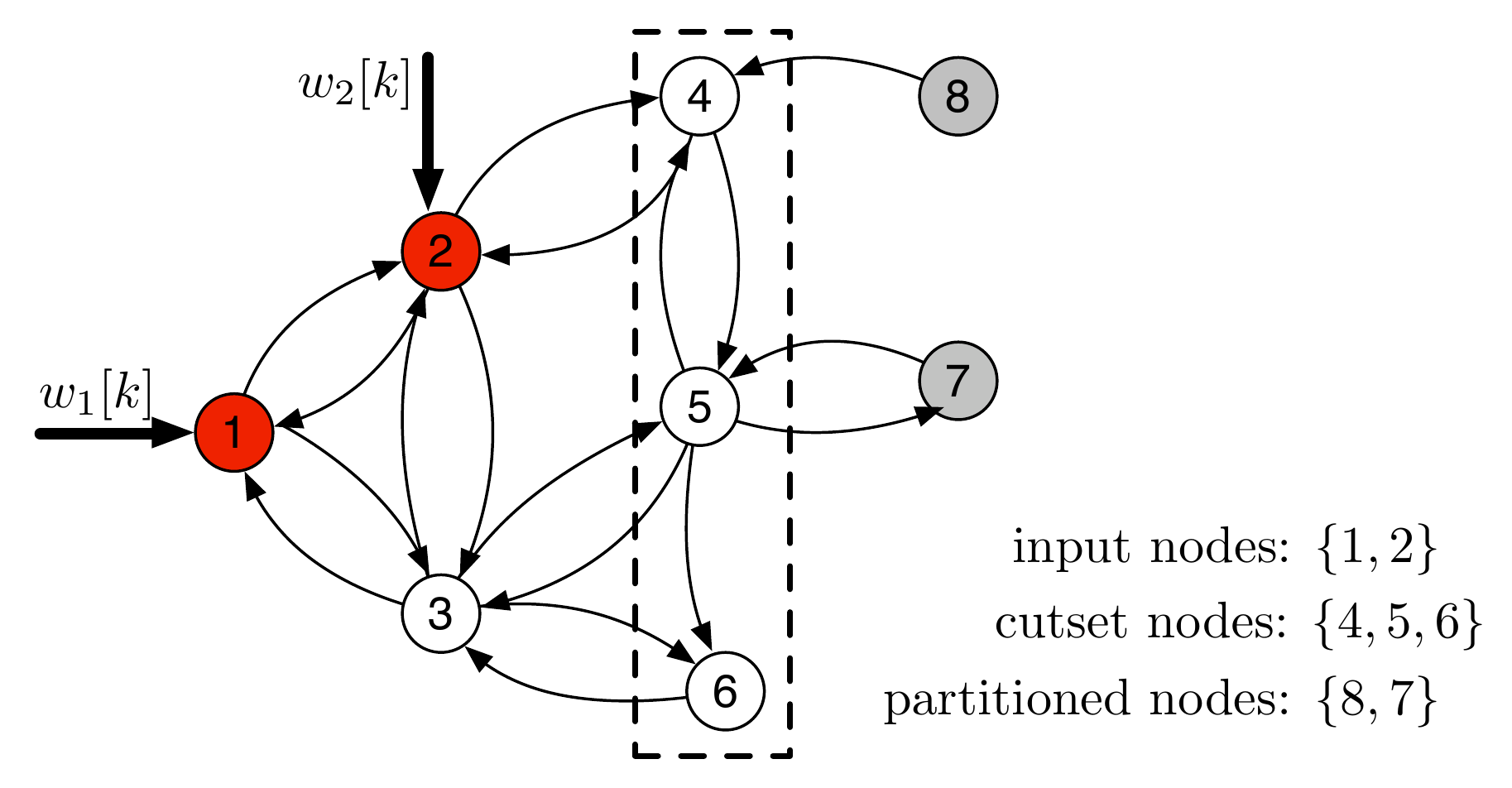}
	\caption{\footnotesize  Illustration of network partitions induced by a node cutset.}
	\label{fig: cutset}
\end{figure}

\begin{example}\label{exam: classical detection setup}
	Let $H_i: w[k]\sim \mathcal{N}(\mu_i, \sigma^2)$ and $y[k]=\alpha w[k]\!+\!v[k]$, where $\alpha$ is the tuning parameter and $v(k)\!\sim \!\mathcal{N}(0,\sigma^2_v)$. For $\pi_i=0.5$, the MAP detector's error probability, based on $N$ samples of $y$, is $0.5\,Q_{\cal N}(0.5 \,\eta)$. Here $\eta^2\!=\!N(\mu_1\!-\!\mu_2)^2/(\sigma^2+\alpha^{-2}\sigma^2_v)$ 
	denotes the SNR, and $Q_{\cal N}(0.5 \,\eta)$ is decreasing in $\eta$. For $\sigma^2_v=0$, the error probability does not depend on $\alpha$, and is lesser than the case where $\sigma^2>0$. However, when $\sigma_v^2>0$, $Q_{\cal N}(\cdot)$ depends on $\alpha$. In particular, when $\alpha$ is small, $Q_{\cal N}(\cdot)$ is high, and vice versa. Thus, in the presence of noise, the error probability can be reduced by properly tuning $\alpha$. \oprocend
\end{example}


\section{Detection performance of the MAP detector}\label{sec:
  detector} 
{In this section, we derive the algebraic expressions of
  the MAP decision rules and their error probabilities for two special
  cases of the hypotheses in \eqref{eq: input hypothesis}. The first
  case is the \textit{mean shift model}, in which the covariance
  matrices in \eqref{eq: input hypothesis} are equal, but the mean
  vectors are different. The MAP detector (see \eqref{eq: MAP mean
    detector} for the decision rule) for this case is the optimal
  detector in the (Bayesian) sense that, for the simple hypotheses
  $H_1$ and $H_2$ defined as in \eqref{eq: input hypothesis}, no other
  detector outperforms the detection performance of \eqref{eq: MAP
    mean detector}. The second case is the \textit{covariance shift
    model} in which the mean vectors in \eqref{eq: input hypothesis}
  are equal, but the covariance matrices are different. For this
  latter case, we will rely on the sub optimal LD-MAP detector (see
  below) for deciding the hypothesis. The reason for working with
  these models is twofold: (i) the error probability expressions are
  analytically tractable and (ii) these models are widely used for
  detection and classification problems that arise in practice
  \cite{HLV:04, LLS:91}. The probability expressions derived in this
  section will be used for the network analysis of the MAP detector's
  performance (Section \ref{sec: network_analysis}). Our work draws on
  the extensive literature on hypothesis testing using Multivariate
  Gaussian data, but uses a specific simplified detector which allows
  development of network-theoretic results. Extension of our framework
  and corresponding results to more general setting is mentioned in
  Remarks \ref{rmk: other covariance structures} and~\ref{rmk:
    non-Gaussian}.

  The results mentioned in the section are obtained from a standard
  application of hypothesis testing tools for linear models. Yet, to
  the best of our knowledge, the asymptotic characterization of the
  error probability of the MAP detector (Lemma \ref{lma: special cases
    error probability}) is novel, and it serves as the starting point
  for the results presented in the subsequent sections.}

\smallskip 
\begin{definition}{\bf \textit{(Linear discriminant function-based MAP detector: LD-MAP)}}\label{def: LD-MAP} A LD-MAP detector is as in \eqref{eq: MAP} with ${\bf Y}_{\cal J}$ \eqref{eq: measurements recorded} replaced by the discriminant function $y={\bf b}^\transpose{\bf Y}_{\cal J}$, where the vector \footnote{In the literature of pattern recognition and communications, ${\bf b}$ is commonly referred as to the Fisher's discriminant and optimal SINR beam former, respectively  \cite{RD-PH-DS:00, YB-VUR-TK:88}.} ${\bf b}\in \mathbb{R}^{mN}$ is the maximizer of the following information divergence criterion: 
	\begin{align}\label{eq: I-divergence}
	I\!=\!\pi_1\mathbb{E}\left[ \left.\ln \frac{f_{H_1}(y)}{f_{H_2}(y)} \right\vert H_1\right]\!+\!\pi_2\mathbb{E}\left[ \left.\ln \frac{f_{H_2}(y)}{f_{H_1}(y)}\right\vert H_2\right], 
	\end{align}
	where $f_{H_i}(y)$ is the density of $y$ given $H_i$ and $I>0$.
\end{definition}

\smallskip 
\begin{remark}{\bf \textit{(Optimal discriminant vector)}}\label{rmk: optimal discriminant} 
	For any arbitrary vector ${\bf b}$, the $I$-divergence measure \eqref{eq: I-divergence} indicates how well a LD-MAP detector is performing in deciding between $H_1$ and $H_2$. Thus by maximizing \eqref{eq: I-divergence}, we are finding a best detector among the class of LD-MAP detectors parameterized by ${\bf b}$ \cite{LLS:91}. \oprocend
\end{remark}

We now state a lemma that provides us with the algebraic expressions of the MAP detectors associated with the mean shift model, and the LD-MAP detector associated with the covariance shift model.

\begin{proposition}{\bf \textit{(Mean and covariance of ${\bf Y}_{\cal J}$)}}\label{prop: moments of measurements} Let ${\bf Y}_{\cal J}$ and $H_i$ be defined as in \eqref{eq: measurements recorded} and \eqref{eq: input hypothesis}, resp. Then, 
	\begin{align}\label{eq: moments}
	\begin{split}
	{\overline{\boldsymbol \mu}}_i&\!\triangleq \!\mathbb{E}[{\bf Y}_{\cal J}|H_{i}]\!=\!\mathcal{F} \left({\bf 1}_N \otimes {\boldsymbol \mu}_i\right) \text{ and }\\
	{\overline{ \Sigma}}_i&\!\triangleq\!\mathrm{Cov}[{\bf Y}_{\cal J}|H_{i}]\!=\!\mathcal{O}\Sigma_0\mathcal{O}^\transpose+\mathcal{F}\left(I_{N }\otimes{\Sigma}_i\right) \mathcal{F}^{\transpose}+\sigma^2_v I, \\
	\end{split}
	\end{align}
	where, the observability and impulse response matrices are
	\begin{align*}
	\mathcal{O}\!=\!\begin{bmatrix}
	CG \\ C G^2\\ \vdots \\ CG^N
	\end{bmatrix}\text{and }
	\mathcal{F}\!=\!\begin{bmatrix}
	C\Pi & 0 & \ldots & 0\\
	CG\Pi & C\Pi & \ldots & 0\\
	\vdots & \vdots & \ddots & \vdots\\
	CG^{N-1}\Pi & CG^{N-2}\Pi & \ldots & C\Pi
	\end{bmatrix}.
	\end{align*}
\end{proposition}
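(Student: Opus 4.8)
The plan is to derive a closed-form representation of ${\bf Y}_{\cal J}$ as an affine function of the primitive random quantities ${\bf x}[0]$, the stacked inputs, and the stacked measurement noise, and then read off the two moments by linearity. First I would unroll the recursion \eqref{eq: system} to obtain the variation-of-constants formula ${\bf x}[k] = G^k {\bf x}[0] + \sum_{j=0}^{k-1} G^{k-1-j}\Pi{\bf w}[j]$, verified by a one-line induction on $k$. Substituting this into the measurement equation \eqref{eq: measurement model} for each $k = 1, \ldots, N$ and stacking the results as in \eqref{eq: measurements recorded} yields
\begin{align*}
{\bf Y}_{\cal J} = \mathcal{O}\,{\bf x}[0] + \mathcal{F}\,{\bf W} + {\bf V},
\end{align*}
where ${\bf W} = ({\bf w}^\transpose[0], \ldots, {\bf w}^\transpose[N-1])^\transpose$ and ${\bf V} = ({\bf v}^\transpose[1], \ldots, {\bf v}^\transpose[N])^\transpose$.

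Second, I would verify the explicit forms of $\mathcal{O}$ and $\mathcal{F}$ by matching block rows. The term $CG^k{\bf x}[0]$ occupies the $k$-th block row, producing exactly the observability matrix $\mathcal{O}$. For the input contribution, the $k$-th block row equals $\sum_{j=0}^{k-1} CG^{k-1-j}\Pi{\bf w}[j]$; collecting the coefficients of ${\bf w}[0], \ldots, {\bf w}[N-1]$ shows that the $(k,\ell)$ block of $\mathcal{F}$ is $CG^{k-\ell}\Pi$ for $k \geq \ell$ and zero otherwise, which is precisely the lower-triangular block-Toeplitz matrix in the statement.

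Third, with this representation the moments follow from linearity and the stated uncorrelatedness of $\{{\bf x}[0], {\bf w}[0], {\bf v}[0], {\bf w}[1], {\bf v}[1],\ldots\}$. Under $H_i$ each ${\bf w}[j]$ has mean ${\boldsymbol \mu}_i$, so $\mathbb{E}[{\bf W}|H_i] = {\bf 1}_N \otimes {\boldsymbol \mu}_i$; since ${\bf x}[0]$ and ${\bf V}$ are zero-mean, taking expectations gives $\overline{\boldsymbol \mu}_i = \mathcal{F}({\bf 1}_N \otimes {\boldsymbol \mu}_i)$. For the covariance, the uncorrelatedness hypothesis forces all cross-covariance terms among ${\bf x}[0]$, ${\bf W}$, and ${\bf V}$ to vanish, so $\mathrm{Cov}[{\bf Y}_{\cal J}|H_i] = \mathcal{O}\,\mathrm{Cov}[{\bf x}[0]]\,\mathcal{O}^\transpose + \mathcal{F}\,\mathrm{Cov}[{\bf W}|H_i]\,\mathcal{F}^\transpose + \mathrm{Cov}[{\bf V}]$. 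Substituting $\mathrm{Cov}[{\bf x}[0]] = \Sigma_0$, the i.i.d.\ structure $\mathrm{Cov}[{\bf W}|H_i] = I_N \otimes \Sigma_i$, and $\mathrm{Cov}[{\bf V}] = \sigma^2_v I$ delivers the claimed expression for $\overline{\Sigma}_i$.

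The argument is essentially routine linear-Gaussian bookkeeping; the only points requiring care are the index accounting in the convolution sum, so that $\mathcal{F}$ emerges block-lower-triangular with the correct powers of $G$, and the explicit invocation of the uncorrelatedness assumption to discard the cross terms in the covariance. I do not anticipate any genuine obstacle beyond this bookkeeping.
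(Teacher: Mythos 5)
Your proposal is correct and follows exactly the paper's own argument: unroll the state recursion to write ${\bf Y}_{\cal J}=\mathcal{O}{\bf x}[0]+\mathcal{F}{\bf w}_{0:N-1}+{\bf v}_{1:N}$ and then take the conditional mean and covariance, using the uncorrelatedness assumption to kill the cross terms. You simply spell out the index bookkeeping that the paper leaves implicit; there is no substantive difference.
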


\smallskip

\begin{lemma}{\bf \textit{(MAP detectors)}}\label{lemma: MAP detector} Let $\pi_1$ and $\pi_2$ be non-zero priors, and define $\gamma=\ln(\pi_1/\pi_2)$. Let ${\bf Y}_{\cal J}$ be  as in \eqref{eq: measurements recorded}, and let $({{\boldsymbol \mu}}_i, {{ \Sigma}}_i)$ and $({\overline{\boldsymbol \mu}}_i, {\overline{ \Sigma}}_i)$ be as in \eqref{eq: input hypothesis} and  \eqref{eq: moments}, resp. 
	\begin{enumerate}
		\item The MAP detector associated with the \textit{mean shift model} ($ {\Sigma}_1={\Sigma}_2$ but $\bs{\mu}_1\ne \bs{\mu}_2$) is given by: 
		\begin{align}\label{eq: MAP mean detector}
		\left(2\, \ol{\bs \mu}_\Delta^\transpose {\overline \Sigma}_c^{-1}\right)  {\bf Y}_{\cal J} \overset{\widehat{H}=H_{2}}{\underset{\widehat{H}=H_{1}}{\gtrless}}2\gamma + \ol{\bs \mu}_\Delta^\transpose{\ol \Sigma}_c^{-1}\left( \ol{\bs \mu}_1+\ol{\bs \mu}_2\right),
		\end{align}
		where $\ol {\bs \mu}_\Delta= \ol {\bs \mu}_2-\ol {\bs \mu}_1$ and $\ol \Sigma_c\triangleq \ol \Sigma_1=\ol \Sigma_1$.
		\item The LD-MAP detector associated with the \textit{covariance shift model} ($ {\Sigma}_1\!\ne\!{\Sigma}_2$ but $\bs{\mu}_1\!=\!\bs{\mu}_2$) is given by: 
		\begin{align}\label{eq: MAP covariance detector}
		\ln \left(\frac{d_1}{d_2}\right)-2\gamma \overset{\widehat{H}=H_{2}}{\underset{\widehat{H}=H_{1}}{\gtrless}}(y-{\bf b}^\transpose{\bs {\ol \mu}}_c)^2\left[\frac{1}{d_2}-\frac{1}{d_1}\right], 
		\end{align}
		where $y={\bf b}^\transpose{\bf Y}_{\cal J}$, $d_i={\bf b}^\transpose \ol \Sigma_i {\bf b}$, and $\ol {\bs \mu}_c\triangleq  \ol {\bs \mu}_1=\ol {\bs \mu}_2$. 
	\end{enumerate}
\end{lemma}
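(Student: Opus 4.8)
The plan is to reduce both decision rules to the log-likelihood-ratio form of the MAP test and then exploit the Gaussian structure of ${\bf Y}_{\cal J}$ provided by Proposition~\ref{prop: moments of measurements}. Starting from \eqref{eq: MAP}, Bayes' rule together with the definition $\gamma=\ln(\pi_1/\pi_2)$ gives the equivalent threshold test $\ln\bigl(f_{H_2}({\bf Y}_{\cal J})/f_{H_1}({\bf Y}_{\cal J})\bigr)\gtrless\gamma$, with $\widehat H=H_2$ declared when the ratio exceeds $\gamma$. Thus both parts reduce to computing a log-likelihood ratio of two Gaussians and rearranging.

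For part (i), I would use that ${\bf Y}_{\cal J}\mid H_i\sim\mathcal{N}(\ol{\bs\mu}_i,\ol\Sigma_i)$ and that $\Sigma_1=\Sigma_2$ forces the measurement covariances to coincide, $\ol\Sigma_1=\ol\Sigma_2=\ol\Sigma_c$. Substituting the two densities, the Gaussian normalizing constants cancel and so does the quadratic term ${\bf Y}_{\cal J}^\transpose\ol\Sigma_c^{-1}{\bf Y}_{\cal J}$, leaving a term linear in ${\bf Y}_{\cal J}$ plus a constant. Collecting the linear part gives $\ol{\bs\mu}_\Delta^\transpose\ol\Sigma_c^{-1}{\bf Y}_{\cal J}$, and the constant simplifies through the symmetric-matrix identity $\ol{\bs\mu}_1^\transpose\ol\Sigma_c^{-1}\ol{\bs\mu}_1-\ol{\bs\mu}_2^\transpose\ol\Sigma_c^{-1}\ol{\bs\mu}_2=-\ol{\bs\mu}_\Delta^\transpose\ol\Sigma_c^{-1}(\ol{\bs\mu}_1+\ol{\bs\mu}_2)$. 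Multiplying the resulting inequality by two yields \eqref{eq: MAP mean detector}.

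For part (ii), I would first observe that $y={\bf b}^\transpose{\bf Y}_{\cal J}$ is, under each hypothesis, a scalar Gaussian. Since $\bs\mu_1=\bs\mu_2$ forces $\ol{\bs\mu}_1=\ol{\bs\mu}_2=\ol{\bs\mu}_c$, the two conditional densities share the mean ${\bf b}^\transpose\ol{\bs\mu}_c$, while their variances $d_i={\bf b}^\transpose\ol\Sigma_i{\bf b}$ differ because $\Sigma_1\neq\Sigma_2$. Forming $\ln\bigl(f_{H_2}(y)/f_{H_1}(y)\bigr)$ for these one-dimensional Gaussians, the term linear in $y$ now cancels (the means agree) while the normalizing constants survive and produce $\tfrac12\ln(d_1/d_2)$; what remains is $\tfrac12\ln(d_1/d_2)-\tfrac12(y-{\bf b}^\transpose\ol{\bs\mu}_c)^2(1/d_2-1/d_1)$, and comparing to $\gamma$ and multiplying by two gives \eqref{eq: MAP covariance detector}.

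The computations themselves are routine, so the only points needing care are the two structural cancellations: equal covariances eliminate the quadratic term and render the mean-shift test linear in ${\bf Y}_{\cal J}$, whereas equal means eliminate the linear term and render the covariance-shift test quadratic in $y$. The remaining work is bookkeeping—tracking the factor of two and the orientation of $\gtrless$ so that $\widehat H=H_2$ labels the correct region in each display.
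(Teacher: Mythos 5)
Your proposal is correct and follows essentially the same route as the paper: reduce \eqref{eq: MAP} to the log-likelihood-ratio test with threshold $\gamma$ (the paper invokes the mixed Bayes formula for this step), then compute the Gaussian log-likelihood ratios of ${\bf Y}_{\cal J}$ and of the scalar $y={\bf b}^\transpose{\bf Y}_{\cal J}$, using $\ol\Sigma_1=\ol\Sigma_2$ to cancel the quadratic term in part (i) and $\ol{\bs\mu}_1=\ol{\bs\mu}_2$ to cancel the linear term in part (ii). The algebra, including the identity for the constant term and the factor of two, checks out against \eqref{eq: MAP mean detector} and \eqref{eq: MAP covariance detector}.
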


\smallskip 
The detectors \eqref{eq: MAP mean detector} and \eqref{eq: MAP covariance detector} are functions of the sufficient statistics $2\, \ol{\bs \mu}_\Delta^\transpose {\overline \Sigma}_c^{-1}{\bf Y}_{\cal J}$ and $y-{\bf b}^\transpose{\bs {\ol \mu}}_c$, respectively. This
means that, given these statistics, other information in ${\bf Y}_{\cal J}$ is not needed for deciding between $H_1$ and $H_2$. In order to characterize the error probabilities of the detectors in Lemma \ref{lemma: MAP detector}, we make the following assumption:

\begin{assumption}\label{assump: error probability assumption}
	The LTI system \eqref{eq: system} is stable. Further, 
	\begin{enumerate}
		\item for the \textit{mean shift model}, $\lim_{N\to \infty}N\|{\bs \mu}_2-{\bs \mu}_1\|_2= c$, where $0<c<\infty$, and $G^k=0$ for some $k\in {\mathbb N}$, and 
		\item for the \textit{covariance shift model}, $\Sigma_1\succ0$ and $\Sigma_2=0$.
	\end{enumerate}
\end{assumption}

\begin{lemma}{\bf \textit{(Error probability: infinite horizon)}}\label{lma: special cases error probability} 
	Let $\pi_1=\pi_2=0.5$ and ${\bf x}[0]=0$. Let $T(z)=C(zI-G)^{-1}\Pi$, where $z \notin \text{spec}(G)$. 
	The error probability of the MAP detector \eqref{eq: MAP mean detector} and the LD-MAP detector \eqref{eq: MAP covariance detector} as $N\to\infty$ are 
	\begin{align} 
	\mbP_{e_m}(\cal J)&\!=\!0.5\,Q_{\cal N}(0.5\,\eta) \text{ and }\label{eq: P_em}\\
	\mbP_{e_v}(\cal J)&\!=\!0.5\left[ 1-Q_{\chi^2}\left(1, \tau \right)\right] +0.5\, Q_{\chi^2}\left(1, \tau R\right)\label{eq: P_ev}, 
	\end{align}
	respectively, where $\tau=\ln R/(R-1)$. The SNRs are
	\begin{align} 
	\eta^2&=N\widetilde {\bs \mu}_{\Delta}^\transpose\left(  [ L^\transpose L+\sigma^2_vI] ^{-1}L^\transpose L\right)\widetilde{ \bs \mu}_{\Delta} \label{eq: eta asymp}\text{ and }\\
	R&=1+{\sigma^{-2}_v}\|T(z)\Sigma_1^{\frac{1}{2}}\|^2_\infty,  \label{eq: Rb asymp}
	\end{align}
	where $L=T(1)\Sigma_c^{\frac{1}{2}}$ and $\widetilde {\bs \mu}_{\Delta}=\Sigma_c^{-\frac{1}{2}}[{\bs \mu}_2-{\bs \mu}_1]$, and $\Sigma_c^{\frac{1}{2}}$ and $\Sigma_1^{\frac{1}{2}}$ are the positive square roots of $\Sigma_c$ and $\Sigma_1$, respectively. 
\end{lemma}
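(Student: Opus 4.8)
\emph{Proof plan.} The plan is to handle the two models separately, in each case first reducing the detector to a scalar statistic whose conditional law is explicitly Gaussian (mean shift) or chi-squared (covariance shift), and only afterwards passing to $N\to\infty$ through the block-Toeplitz structure of $\mathcal{F}$ and $\ol\Sigma_i$. For the \emph{mean shift model}, Lemma~\ref{lemma: MAP detector}(i) shows that \eqref{eq: MAP mean detector} thresholds the scalar statistic $t=2\,\ol{\bs\mu}_\Delta^\transpose\ol\Sigma_c^{-1}{\bf Y}_{\cal J}$. Since ${\bf x}[0]=0$, Proposition~\ref{prop: moments of measurements} gives ${\bf Y}_{\cal J}\sim\calN(\ol{\bs\mu}_i,\ol\Sigma_c)$ under $H_i$ with common covariance $\ol\Sigma_c=\mathcal{F}(I_N\otimes\Sigma_c)\mathcal{F}^\transpose+\sigma_v^2 I$, so $t$ is scalar Gaussian with common conditional variance $4\,\ol{\bs\mu}_\Delta^\transpose\ol\Sigma_c^{-1}\ol{\bs\mu}_\Delta$ and conditional means separated by $2\,\ol{\bs\mu}_\Delta^\transpose\ol\Sigma_c^{-1}\ol{\bs\mu}_\Delta$. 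With $\gamma=0$ the threshold sits at the midpoint, so each conditional error probability is $\Qn$ of half the Mahalanobis distance; averaging over the equal priors yields \eqref{eq: P_em} with $\eta^2=\ol{\bs\mu}_\Delta^\transpose\ol\Sigma_c^{-1}\ol{\bs\mu}_\Delta$. It then remains only to identify the limit of this quadratic form.

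Here the analytic core enters. Writing $\ol{\bs\mu}_\Delta=\mathcal{F}({\bf 1}_N\otimes\bs\mu_\Delta)$ and using the nilpotency $G^k=0$ of Assumption~\ref{assump: error probability assumption}(i), the block partial sums $\sum_j CG^{j-1}\Pi$ stabilise after finitely many terms to $C(I-G)^{-1}\Pi=T(1)$; hence every block of $\ol{\bs\mu}_\Delta$ except the first few equals $T(1)\bs\mu_\Delta$, i.e. $\ol{\bs\mu}_\Delta$ is a DC vector up to an $O(1)$ boundary correction. As $\ol\Sigma_c$ is a banded block-Toeplitz matrix with symbol $T(z)\Sigma_c T(z)^*+\sigma_v^2 I$ on $|z|=1$, applying $\ol\Sigma_c^{-1}$ to a DC vector is, in the interior, multiplication by $(LL^\transpose+\sigma_v^2 I)^{-1}$ with $L=T(1)\Sigma_c^{1/2}$, the boundary blocks contributing only $O(1)$ against the $O(N)$ bulk. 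This gives $\ol{\bs\mu}_\Delta^\transpose\ol\Sigma_c^{-1}\ol{\bs\mu}_\Delta=N\,(T(1)\bs\mu_\Delta)^\transpose(LL^\transpose+\sigma_v^2 I)^{-1}(T(1)\bs\mu_\Delta)+o(N)$, and \eqref{eq: eta asymp} then follows from $T(1)\bs\mu_\Delta=L\,\widetilde{\bs\mu}_\Delta$ and the push-through identity $L^\transpose(LL^\transpose+\sigma_v^2 I)^{-1}L=(L^\transpose L+\sigma_v^2 I)^{-1}L^\transpose L$; the scaling in Assumption~\ref{assump: error probability assumption}(i) keeps the limiting separation finite.

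For the \emph{covariance shift model}, Assumption~\ref{assump: error probability assumption}(ii) and ${\bf x}[0]=0$ give $\ol\Sigma_2=\sigma_v^2 I$ and $\ol\Sigma_1=\mathcal{F}(I_N\otimes\Sigma_1)\mathcal{F}^\transpose+\sigma_v^2 I$, with $\ol{\bs\mu}_c=\ol{\bs\mu}_1=\ol{\bs\mu}_2$. A direct evaluation of \eqref{eq: I-divergence} for scalar centred Gaussians yields $I=\tfrac14(R+R^{-1}-2)$ with $R=d_1/d_2$, strictly increasing in $R>1$; hence the optimal ${\bf b}$ maximises the generalised Rayleigh quotient ${\bf b}^\transpose\ol\Sigma_1{\bf b}/{\bf b}^\transpose\ol\Sigma_2{\bf b}$, so $R=\lambda_{\max}(\ol\Sigma_2^{-1}\ol\Sigma_1)=1+\sigma_v^{-2}\lambda_{\max}(\mathcal{F}(I_N\otimes\Sigma_1)\mathcal{F}^\transpose)$. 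For the error probability I would center the statistic, $u=y-{\bf b}^\transpose\ol{\bs\mu}_c$, so that $u^2/d_i\sim\chi^2(1)$ under $H_i$; with $\gamma=0$ the quadratic rule \eqref{eq: MAP covariance detector} reduces to a one-sided comparison of $u^2$ against the threshold $d_1\tau$ with $\tau=\ln R/(R-1)$, and evaluating the two $\chi^2(1)$ tails under $H_1$ and $H_2$ (using $d_1/d_2=R$) reproduces \eqref{eq: P_ev} exactly, for every $N$.

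Finally, for the asymptotics of $R$, factor $\mathcal{F}(I_N\otimes\Sigma_1)\mathcal{F}^\transpose=\mathcal{F}(I_N\otimes\Sigma_1^{1/2})\,[\mathcal{F}(I_N\otimes\Sigma_1^{1/2})]^\transpose$, so its largest eigenvalue is the squared spectral norm of the banded block-Toeplitz matrix $\mathcal{F}(I_N\otimes\Sigma_1^{1/2})$, whose symbol on $|z|=1$ is a unimodular multiple of $T(z)\Sigma_1^{1/2}$. The classical convergence of finite block-Toeplitz section norms to the $\infty$-norm of the generating symbol then gives $\lambda_{\max}\to\|T(z)\Sigma_1^{1/2}\|_\infty^2$, hence \eqref{eq: Rb asymp}. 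I expect the main obstacle to be precisely these two Toeplitz limits: controlling the boundary/edge effects so that the interior DC behaviour governs the mean case and the symbol sup-norm governs the covariance case. The banding forced by $G^k=0$ together with the stability of $G$ are exactly what render both limits tractable and let me invoke the $\infty$-norm characterisation of the symbol.
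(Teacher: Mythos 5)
Your proposal is correct in substance and reaches all four formulas; the finite-$N$ reductions (the Gaussian statistic $t=2\,\ol{\bs\mu}_\Delta^\transpose\ol\Sigma_c^{-1}{\bf Y}_{\cal J}$ for the mean model, the centred $\chi^2(1)$ statistic and the Rayleigh-quotient characterization $\widehat R=\lambda_{\max}(\ol\Sigma_1\ol\Sigma_2^{-1})$ for the covariance model, and the block-Toeplitz section-norm limit giving $\|T(z)\Sigma_1^{1/2}\|_\infty$) coincide with the paper's proof. Where you genuinely diverge is the asymptotics of $\widehat\eta^2=\ol{\bs\mu}_\Delta^\transpose\ol\Sigma_c^{-1}\ol{\bs\mu}_\Delta$: the paper never inverts a Toeplitz symbol here. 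Instead it writes $\mathcal{F}({\bf 1}_N\otimes\bs\mu_\Delta)=S_N\bs\mu_\Delta+[{\bf 1}_N\otimes\overline K]\bs\mu_\Delta$ with $\overline K=T(1)\Pi$-type partial sums, proves the \emph{exact} identity $N\overline K^\transpose\overline K=[\overline K^\transpose\overline K\Sigma_c+\sigma_v^2I]F+P$ by multiplying $[{\bf 1}_N\otimes\overline K]^\transpose$ through $\ol\Sigma_c$, and then kills the remainder $\epsilon(N)$ using the nilpotency of $G$ (so $S_N$ has finitely many nonzero block rows) together with $\|\bs\mu_\Delta\|_2\to0$ from Assumption~\ref{assump: error probability assumption}. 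Your route replaces this with the heuristic that $\ol\Sigma_c^{-1}$ acts on a DC vector as multiplication by the inverse symbol $(LL^\transpose+\sigma_v^2I)^{-1}$ up to $O(1)$ boundary terms; this is believable for a banded positive-definite asymptotically block-Toeplitz matrix, and your push-through identity correctly converts $L^\transpose(LL^\transpose+\sigma_v^2I)^{-1}L$ into the stated $(L^\transpose L+\sigma_v^2I)^{-1}L^\transpose L$, but as written it is the one step that is asserted rather than proved: you would need the off-diagonal decay of the finite-section inverse and a quantitative bound on the boundary contribution, whereas the paper's algebraic identity avoids Toeplitz inverse theory entirely and isolates the error in three explicitly finite-rank matrices $Q_1,Q_2,Q_3$. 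Two smaller omissions: you do not justify interchanging $N\to\infty$ with $Q_{\cal N}$ and $Q_{\chi^2}$ (the paper does this via the monotonicity of $\widehat\eta$, $\widehat R$, $\widehat\tau$, $\widehat\tau\widehat R$ in $N$, Proposition~\ref{prop: eta and R versus N}, which also guarantees the limits exist), and your $I$-divergence computation $I=\tfrac14(R+R^{-1}-2)$ should note that maximizing it selects $\lambda_{\max}$ rather than $1/\lambda_{\min}$ only because $\ol\Sigma_2=\sigma_v^2I\preceq\ol\Sigma_1$ forces all generalized eigenvalues to be at least one; the paper outsources this to a citation.
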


\smallskip 
The assumptions $\pi_i=0.5$ and ${\bf x}[0]=0$ are for the ease of presentation, and the probability expressions can be easily adjusted to include other priors and initial conditions. The assumption $N\|{\bs \mu}_2-{\bs \mu}_1\|_2\to c$ ensures that $\mbP_{e_m}({\cal J})<0.5$. Instead, the assumption $G^k=0$ is to eliminate  the remainder terms in the computation of $\eta$. We emphasize that the only restriction on $k$ is that it should be finite, but can be arbitrarily large. We now state a corollary to the above lemma in which we do not assume $\Sigma_2=0$ in the covariance shift model (see Remark \ref{rmk: other covariance structures}).

\smallskip 
\begin{corollary}{\bf \textit{(SNRs: identical input statistics)}}\label{cor: special cases error probability} 
	Let $H_i$ in \eqref{eq: input hypothesis} be ${\bf w}[k]\sim \mathcal{N}(\mu_i{\bf 1}, \sigma_i^2D)$, where $\mu_i$ and $\sigma_i^2$ are scalars, and $D>0$. For the \textit{covariance shift model} let $\sigma_1^2>\sigma_2^2$. Then,
	\begin{align} 
	\eta^2_s&=\left( N\mu_\Delta^2 \right) {\bs 1}^\transpose [\sigma^2_cL^\transpose L+\sigma^2_vI] ^{-1}L^\transpose L{\bs 1}, \label{eq: identical eta asymp}\\
	R_s&=\frac{\sigma_1^2\|T(z)D^{\frac{1}{2}}\|_\infty+\sigma^2_v}{\sigma_2^2\|T(z)D^{\frac{1}{2}}\|_\infty+\sigma^2_v}\label{eq: identical R asymp}, 
	\end{align}
	where $\mu_c=\mu_i$, $\sigma_c=\sigma_i$, $L=T(1)D^{\frac{1}{2}}$, and $\mu_\Delta=\mu_2-\mu_1$. 
\end{corollary}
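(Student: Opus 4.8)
The plan is to derive both SNRs by specializing the general asymptotic formulas \eqref{eq: eta asymp} and \eqref{eq: Rb asymp} of Lemma~\ref{lma: special cases error probability} to the structured statistics $\boldsymbol{\mu}_i = \mu_i\mathbf{1}$ and $\Sigma_i = \sigma_i^2 D$. For the mean-shift SNR \eqref{eq: identical eta asymp} this is a direct substitution and scalar bookkeeping; for the covariance-shift SNR \eqref{eq: identical R asymp} I must first lift the restriction $\Sigma_2 = 0$ used in Lemma~\ref{lma: special cases error probability} (the extension anticipated in Remark~\ref{rmk: other covariance structures}) and then specialize.

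For \eqref{eq: identical eta asymp} I would substitute $\Sigma_c = \sigma_c^2 D$ and $\boldsymbol{\mu}_2 - \boldsymbol{\mu}_1 = \mu_\Delta\mathbf{1}$ into the Lemma's quantities. Then $\Sigma_c^{1/2} = \sigma_c D^{1/2}$, so the Lemma's matrix $T(1)\Sigma_c^{1/2}$ equals $\sigma_c L$ with $L = T(1)D^{1/2}$ as defined in the corollary, whence $L^\transpose L \mapsto \sigma_c^2 L^\transpose L$; likewise $\widetilde{\boldsymbol{\mu}}_\Delta = \Sigma_c^{-1/2}(\boldsymbol{\mu}_2 - \boldsymbol{\mu}_1) = (\mu_\Delta/\sigma_c)D^{-1/2}\mathbf{1}$. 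Inserting these into \eqref{eq: eta asymp}, the factor $\sigma_c^{-2}$ carried by $\widetilde{\boldsymbol{\mu}}_\Delta$ cancels the $\sigma_c^2$ carried by the rightmost factor $L^\transpose L$, leaving $N\mu_\Delta^2$ multiplied by a quadratic form in $D^{-1/2}\mathbf{1}$ with kernel $[\sigma_c^2 L^\transpose L + \sigma_v^2 I]^{-1}L^\transpose L$, i.e.\ \eqref{eq: identical eta asymp}. This part is purely algebraic and contains no real difficulty.

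For \eqref{eq: identical R asymp} I would first invoke Proposition~\ref{prop: moments of measurements} with $\mathbf{x}[0]=0$ to write $\overline{\Sigma}_i = \sigma_i^2 M + \sigma_v^2 I$, where $M = \mathcal{F}(I_N\otimes D)\mathcal{F}^\transpose$ is common to both hypotheses. Since the means coincide, the discriminant $y = \mathbf{b}^\transpose\mathbf{Y}_{\cal J}$ differs between $H_1$ and $H_2$ only through its variance $d_i = \mathbf{b}^\transpose\overline{\Sigma}_i\mathbf{b}$, and the $I$-divergence \eqref{eq: I-divergence} of Definition~\ref{def: LD-MAP} reduces to a strictly increasing function of $R = d_1/d_2$ on $[1,\infty)$ (note $R\ge 1$ because $M\succeq 0$ and $\sigma_1^2>\sigma_2^2$). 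Hence, by Remark~\ref{rmk: optimal discriminant}, the best LD-MAP detector maximizes the generalized Rayleigh quotient $\mathbf{b}^\transpose(\sigma_1^2 M + \sigma_v^2 I)\mathbf{b}\,/\,\mathbf{b}^\transpose(\sigma_2^2 M + \sigma_v^2 I)\mathbf{b}$. As both matrices are polynomials in the single symmetric matrix $M$, they are simultaneously diagonalizable, so the maximizing ratio equals $(\sigma_1^2\lambda + \sigma_v^2)/(\sigma_2^2\lambda + \sigma_v^2)$ evaluated at $\lambda = \lambda_{\max}(M)$, this being increasing in $\lambda$. Passing to the limit $N\to\infty$ then yields \eqref{eq: identical R asymp}.

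The main obstacle is the last step: establishing $\lambda_{\max}(M)\to\|T(z)D^{1/2}\|_\infty^2$ as $N\to\infty$. I would identify $\mathcal{F}$ as the finite lower-triangular block-Toeplitz truncation of the causal convolution operator whose symbol is $T(z) = C(zI-G)^{-1}\Pi$, so that $M$ is the truncation of the self-adjoint operator with symbol $T(z)D\,T(z)^{*} = (T(z)D^{1/2})(T(z)D^{1/2})^{*}$. The convergence of the top eigenvalue of such finite sections to the essential supremum over the unit circle of the symbol's spectral norm—namely $\|T(z)D^{1/2}\|_\infty^2$—is the analytic heart of the argument, and it is here that stability of \eqref{eq: system} in Assumption~\ref{assump: error probability assumption} is used, since it guarantees a bounded symbol. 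A secondary, more bookkeeping obstacle is justifying the relaxation $\Sigma_2 \neq 0$: one must recompute the decision threshold and the two conditional laws of the statistic $(y-\mathbf{b}^\transpose\overline{\boldsymbol{\mu}}_c)^2$ so that \eqref{eq: P_ev} persists with $R = d_1/d_2$, which is precisely what Remark~\ref{rmk: other covariance structures} flags.
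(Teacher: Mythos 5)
Your proposal is correct and follows essentially the route the paper intends: direct substitution into Lemma~\ref{lma: special cases error probability} for $\eta_s$, and, for $R_s$, the simultaneous-diagonalizability / generalized-Rayleigh-quotient argument anticipated in Remark~\ref{rmk: other covariance structures} combined with the block-Toeplitz eigenvalue limit already carried out verbatim in the Lemma's proof (with $\Sigma_1^{1/2}$ replaced by $D^{1/2}$), so the ``analytic heart'' you flag requires no new work. Your careful bookkeeping in fact exposes two apparent typos in the stated corollary: the quadratic form in \eqref{eq: identical eta asymp} should be evaluated at $D^{-1/2}{\bf 1}$ rather than ${\bf 1}$, and $\|T(z)D^{\frac{1}{2}}\|_\infty$ in \eqref{eq: identical R asymp} should appear squared --- both coincide with the paper's expressions when $D=I$ (as in the simulations), and only the squared version reduces to \eqref{eq: Rb asymp} upon setting $\sigma_2^2=0$.
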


The error probabilities for the identical statistics case can be obtained by substituting $\eta_s$ and $R_s$ to $\eta$ and $R$ in \eqref{eq: P_em} and \eqref{eq: P_ev}, respectively. The effect of sensor noise is also evident from the SNR expressions in the above corollary. In particular, by setting $\sigma^2_v=0$ in \eqref{eq: identical eta asymp} and \eqref{eq: identical R asymp}, the probabilities do not depend on the network matrix $G$. 

Notice that the expressions of $\mbP_{e_m}(\cal J)$ and $\mbP_{e_m}(\cal J)$ in above lemma are valid even when $N$ is finite. However, in this case, $\eta$ and $R$ are complicated functions of the adjacency matrix $G$. Instead, the elegance of SNRs in Lemma \ref{lma: special cases error probability} and Corollary \ref{cor: special cases error probability} is that they depend on the adjacency matrix $G$ through the well understood transfer matrix $T(z)$. Thus, when $N \to \infty$, one can easily understand the impact of network structure on the detection performance by analyzing $T(z)$. By interpreting the quadratic function in $\eta$ (or $\eta_s$) and $\|\cdot||_{\infty}$ in $R$ (or $R_s$) as a measure of gain, one expects that higher gains results in minimum error probabilities. This intuition is made precise in the following proposition:

\begin{proposition}\label{prop: error probability vs eta and R}
	$\mbP_{e_m}(\cal J)$ and $\mbP_{e_v}(\cal J)$ are decreasing in the SNRs $\eta$ (or $\eta_s$) and $R$ (or $R_s$), respectively. 
\end{proposition}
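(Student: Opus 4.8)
The plan is to handle the two detectors separately, since the mean-shift case is essentially immediate and all the genuine work lies in the covariance-shift case. Recall that, by definition, $Q_{\cal N}(\cdot)$ is the standard normal tail function and hence strictly decreasing on $\mathbb{R}$, while the nonnegative SNR enters $\mbP_{e_m}(\cal J) = 0.5\, Q_{\cal N}(0.5\,\eta)$ only through the increasing map $\eta \mapsto 0.5\,\eta$. Composing a strictly decreasing function with an increasing one, I immediately obtain that $\mbP_{e_m}(\cal J)$ is decreasing in $\eta$, and therefore also in $\eta_s$, which is merely the instance of $\eta$ produced by Corollary \ref{cor: special cases error probability}. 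No obstacle arises here.

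For the covariance-shift case I would first recast \eqref{eq: P_ev} in terms of the chi-square CDF. Writing $F(x) := 1 - Q_{\chi^2}(1,x) = \mathrm{Pr}[Y < x]$ with $Y \sim \chi^2(1)$, the expression \eqref{eq: P_ev} becomes $\mbP_{e_v}(\cal J) = 0.5\bigl[\,1 + F(\tau) - F(\tau R)\,\bigr]$, where $\tau = \ln R/(R-1)$. In this form the entire dependence on $R$ flows through the two arguments $\tau(R)$ and $\tau(R)\,R$, and the sign of $\frac{d}{dR}\mbP_{e_v}(\cal J)$ is controlled entirely by how these two arguments move.

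The crux is then two elementary monotonicity facts on the relevant domain $R>1$ (note $R = 1 + \sigma_v^{-2}\|T(z)\Sigma_1^{\frac{1}{2}}\|^2_\infty > 1$, and likewise $R_s>1$ when $\sigma_1^2>\sigma_2^2$): that $\tau(R)$ is \emph{decreasing} while $\tau(R)\,R = R\ln R/(R-1)$ is \emph{increasing}. Differentiating, $\tau'(R)$ has the sign of $(1-1/R)-\ln R$ and $(\tau R)'(R)$ has the sign of $(R-1)-\ln R$; both follow at once from the standard logarithm bounds $(R-1)/R < \ln R < R-1$, valid for all $R>1$. Since $F$ is a CDF, its density is nonnegative, so $F(\tau(R))$ is decreasing and $F(\tau(R)R)$ is increasing in $R$. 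Hence $1 + F(\tau) - F(\tau R)$ is decreasing, and so $\mbP_{e_v}(\cal J)$ is decreasing in $R$ (and in $R_s$).

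I expect the main obstacle to be purely the identification of the right structure in the second step: once \eqref{eq: P_ev} is rewritten as $0.5[1+F(\tau)-F(\tau R)]$ and one recognizes that $\tau$ decreases while $\tau R$ increases, the two terms $F(\tau)$ and $-F(\tau R)$ contribute with the same nonpositive sign to the derivative and the conclusion is forced by the chain rule. The only genuine computation is verifying the scalar inequalities $(R-1)/R<\ln R<R-1$, which is routine; at the boundary $R=1$ one has $\tau\to 1$ and $\mbP_{e_v}(\cal J)=0.5$, consistent with a chance-level detector.
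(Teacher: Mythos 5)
Your proof is correct and follows essentially the same route as the paper: the mean-shift case is the immediate monotonicity of $Q_{\mathcal N}$, and the covariance-shift case rests on showing $\frac{d\tau}{dR}<0$ and $\frac{d(\tau R)}{dR}>0$ for $R>1$ via the bounds $(R-1)/R<\ln R<R-1$, exactly as in the paper's computation \eqref{eq: tau R derivatives}. Rewriting \eqref{eq: P_ev} through the CDF $F(x)=1-Q_{\chi^2}(1,x)$ rather than the tail function is only a cosmetic repackaging of the same argument.
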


The above proposition also helps us to compare the performance of the MAP and LD-MAP detectors associated with different sensor sets. This fact will be exploited greatly in the next section. 

\smallskip 
\begin{remark}{\bf \textit{(LD-MAP detector's error probability for other covariance matrix structures)}}\label{rmk: other covariance structures} We now comment on extending $\mbP_{e_v}(\cal J)$ \eqref{eq: P_ev} for including other covariance matrices. The case $\Sigma_1=0$ and $\Sigma_2>0$ can be handled using the proof of Lemma \ref{lma: special cases error probability}. For the scenario where neither of $\Sigma_1$ or $\Sigma_2$ is zero, if we have $N<\infty$ and $\lambda_{\text{max}}(\ol \Sigma_1\ol \Sigma_2^{-1})>\lambda_\text{min}(\ol \Sigma_1\ol \Sigma_2^{-1})$, then $\mbP_{e_v}({\cal J})$ remains the same as in \eqref{eq: P_ev}, with $R=\lambda_{\text{max}}(\ol \Sigma_1\ol \Sigma_2^{-1})$. For other cases we refer the reader to \cite{LLS:91}. However, the main difficulty in analyzing any of these error probabilities lies in the fact that resulting expressions of SNRs ($R$) are not amenable to analysis. If one assumes $\ol \Sigma_1$ and $\ol\Sigma_2$ to be simultaneously diagonalizable, as is the case with Corollary \ref{cor: special cases error probability}, an expression of $R$ similar to \eqref{eq: identical R asymp} may be obtained.  \QEDB
\end{remark}

\section{Network analysis of the MAP detector}\label{sec: network_analysis}

In this section, we characterize networks for which the MAP detector's performance associated with the sensors that are close to the input nodes is better (or worse) than those of sensors that are farther apart. We distinguish two separate cases when the sensors are without noise ($\sigma^2_v>0$) and with noise ($\sigma^2_v=0$). To make the notion of closeness precise, we introduce the notion of a node cutset. 

\smallskip 
\begin{definition}{\bf \textit{(Node cutset)}}\label{def: sep cut set}
	For the graph $\mathcal G:=(\cal V, \cal E)$ with input nodes $\cal K$, the nodes ${\cal C}_d\subseteq {\cal V}$, with $d>1$, form a \textit{node cutset} if there exist a non empty source set ${\cal S}\subseteq {\cal V}$ and a non empty partitioned set ${\cal P}\subseteq {\cal V}$ such that ${\cal V}={\cal S}\sqcup{\cal C}_d\sqcup{\cal P}$, where $\sqcup$ denotes the disjoint union, and 
	%
	%
	\begin{itemize}
		\item [(i)] $\cal K\subseteq \cal S$ and $\text{dist}({\cal K}, {\cal C}_d)\geq d$, and 
		\item [(ii)] every path from $\cal S$ to $\cal P$ contains a node in ${\cal C}_d$. 
	\end{itemize}
\end{definition}

\smallskip 
The requirement (i) ensures that the node cutset is at least $d$ edges away from the input nodes. To illustrate Definition \ref{def: sep cut set}, consider the network in Fig \ref{fig: cutset}. For the input nodes ${\cal K}=\{1,2\}$, the nodes ${\cal C}_1=\{4, 5, 6\}$ forms a node cutset. However, the nodes $\{5,6,7\}$ ceases to form a node cutset, since they failed to satisfy requirement (ii) in the above definition. 

\subsection{Noiseless measurements}
In this section, we state our results on network theoretic characterization of the MAP detectors assuming that the measurement noise in \eqref{eq: system} is negligible, i.e., $\sigma^2_v=0$.  It should be noted that, if a result holds true for the general detection problem \eqref{eq: input hypothesis}, we do not state the analogous result for the \textit{mean} and \textit{covariance shift models}.  

\smallskip 
%

\begin{theorem}{\bf \textit{(Performance of sensors on the node cutset vs the partitioned set: noiseless measurements)}}\label{thm: network_theoretic_result_noiseless}  Consider the general detection problem \eqref{eq: input hypothesis}. Let ${\cal C}_d$ and $\cal P$ be as in Definition \ref{def: sep cut set}, and assume that the measurements from both these node sets are noiseless ($\sigma^2_v=0$). Associated with these measurements, let $\mathbb{P}_e\left( {\mathcal{C}_d}\right)$ and $\mathbb{P}_e\left( {\mathcal{P}}\right)$ be the respective error probabilities that are computed using \eqref{eq: general error probability}. Then, $\mathbb{P}_e\left( {\mathcal{C}_d}\right)\leq \mathbb{P}_e\left( {\mathcal{P}}\right)$. 
\end{theorem}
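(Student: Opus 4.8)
The plan is to show that, because $\mathcal{C}_d$ separates the input nodes from $\mathcal{P}$, the cutset measurements form a sufficient statistic relative to the partitioned measurements: every path by which the input influences a node of $\mathcal{P}$ first passes through $\mathcal{C}_d$, so (in the noiseless case) the record at $\mathcal{P}$ is a deterministic, degraded image of the record at $\mathcal{C}_d$. Since the MAP rule \eqref{eq: MAP} attains the minimum Bayes error \eqref{eq: general error probability} among all detectors, a data-processing argument then yields $\mathbb{P}_e(\mathcal{C}_d)\le\mathbb{P}_e(\mathcal{P})$. I will write $\mathcal{O}_{\mathcal{C}_d},\mathcal{F}_{\mathcal{C}_d}$ and $\mathcal{O}_{\mathcal{P}},\mathcal{F}_{\mathcal{P}}$ for the matrices of Proposition \ref{prop: moments of measurements} specialized to the two output selections, and $T_{\mathcal{C}_d}(z),T_{\mathcal{P}}(z)$ for the corresponding transfer matrices.

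The technical core is a factorization of the input-output maps through the cut. Partition the state as $(\mathbf{x}_{\mathcal{S}},\mathbf{x}_{\mathcal{C}_d},\mathbf{x}_{\mathcal{P}})$ and block $G$ accordingly. I would first record the combinatorial consequence of Definition \ref{def: sep cut set}(ii): in $(G^k)_{\mathcal{P},\mathcal{K}}$, every length-$k$ walk from an input node in $\mathcal{K}\subseteq\mathcal{S}$ to a node of $\mathcal{P}$ must visit $\mathcal{C}_d$. Decomposing each such walk at its \emph{last} visit to $\mathcal{C}_d$ gives the convolution identity $(G^k)_{\mathcal{P},\mathcal{K}}=\sum_{t=0}^{k-1}(G_{PP}^{\,k-t-1}G_{PC})\,(G^{t})_{\mathcal{C}_d,\mathcal{K}}$, since after the last crossing the walk is confined to $\mathcal{P}$ (it can neither re-enter $\mathcal{C}_d$ nor reach $\mathcal{P}$ from $\mathcal{S}$ without recrossing). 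In $z$-transform form this is $T_{\mathcal{P}}(z)=R(z)\,T_{\mathcal{C}_d}(z)$ with $R(z)=C_{\mathcal{P}}(zI-G_{PP})^{-1}G_{PC}$, and at the level of impulse responses it reads $\mathcal{F}_{\mathcal{P}}=\mathcal{R}\,\mathcal{F}_{\mathcal{C}_d}$ for the block lower-triangular convolution matrix $\mathcal{R}$ built from $R$.

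With $\sigma_v^2=0$ and $\mathbf{x}[0]=0$ the measurements are $\mathbf{Y}_{\mathcal{C}_d}=\mathcal{F}_{\mathcal{C}_d}\mathbf{W}$ and $\mathbf{Y}_{\mathcal{P}}=\mathcal{F}_{\mathcal{P}}\mathbf{W}=\mathcal{R}\,\mathbf{Y}_{\mathcal{C}_d}$, where $\mathbf{W}$ is the stacked input, so $\mathbf{Y}_{\mathcal{P}}$ is a deterministic linear image of $\mathbf{Y}_{\mathcal{C}_d}$. Hence any detector acting on $\mathbf{Y}_{\mathcal{P}}$ coincides with a detector acting on $\mathbf{Y}_{\mathcal{C}_d}$ (precompose with $\mathcal{R}$), and because the MAP detector minimizes \eqref{eq: general error probability} over all detectors of $\mathbf{Y}_{\mathcal{C}_d}$, we obtain $\mathbb{P}_e(\mathcal{C}_d)\le\mathbb{P}_e(\mathcal{P})$. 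Equivalently, by \eqref{eq: moments}, $\overline{\boldsymbol\mu}_i^{\mathcal{P}}=\mathcal{R}\,\overline{\boldsymbol\mu}_i^{\mathcal{C}_d}$ and $\overline{\Sigma}_i^{\mathcal{P}}=\mathcal{R}\,\overline{\Sigma}_i^{\mathcal{C}_d}\mathcal{R}^{\transpose}$, exhibiting the $\mathcal{P}$-experiment as a noise-free garbling of the $\mathcal{C}_d$-experiment, whence monotonicity of the Bayes error follows (Blackwell).

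The step I expect to be most delicate is extending this cleanly to a random initial state $\mathbf{x}[0]\sim\mathcal{N}(\mathbf{0},\Sigma_0)$, as allowed by \eqref{eq: general error probability}. Then $\mathbf{Y}_{\mathcal{P}}=\mathcal{R}\,\mathbf{Y}_{\mathcal{C}_d}+(\mathcal{O}_{\mathcal{P}}-\mathcal{R}\,\mathcal{O}_{\mathcal{C}_d})\mathbf{x}[0]$, and the residual term is $H$-independent but in general correlated with $\mathbf{Y}_{\mathcal{C}_d}$ through $\mathbf{x}[0]$, because walks inside $\mathcal{P}$ that never touch $\mathcal{C}_d$ force $\mathcal{O}_{\mathcal{P}}\neq\mathcal{R}\,\mathcal{O}_{\mathcal{C}_d}$. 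To preserve the garbling one must verify that $\overline{\Sigma}_i^{\mathcal{P}}-\mathcal{R}\,\overline{\Sigma}_i^{\mathcal{C}_d}\mathcal{R}^{\transpose}=\mathcal{O}_{\mathcal{P}}\Sigma_0\mathcal{O}_{\mathcal{P}}^{\transpose}-\mathcal{R}\,\mathcal{O}_{\mathcal{C}_d}\Sigma_0\mathcal{O}_{\mathcal{C}_d}^{\transpose}\mathcal{R}^{\transpose}$ is positive semidefinite; this quantity is independent of $i$ (which is precisely what permits a single garbling kernel), it vanishes when $\mathbf{x}[0]=0$, and in general requires either a structural hypothesis on $\Sigma_0$ or an appeal to the initial state being common nuisance noise. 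I would therefore present the argument first for $\mathbf{x}[0]=0$, as in Lemma \ref{lma: special cases error probability}, and dispatch the general initial condition as a separate verification.
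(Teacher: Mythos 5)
Your proposal is correct and takes essentially the same route as the paper: the paper obtains the same factorization $\mathbf{Y}_{\mathcal{P}}=\widehat{M}\,\mathbf{Y}_{\mathcal{C}_d}+M\,\widehat{\mathbf{Y}}[0]$ directly by unrolling the block recursion \eqref{eq: subsystem} (rather than by your walk-counting decomposition at the last visit to the cut) and then concludes via Lemma \ref{lma: dependent_measurements_poe}, the theorem of irrelevance. The random-initial-state residual you flag as the delicate step is precisely the term $M\,\widehat{\mathbf{Y}}[0]$, which the paper dispatches by declaring it hypothesis-independent and absorbing it into the perturbation $\mathbf{v}$ of that lemma, so your separate verification for $\mathbf{x}[0]\neq\mathbf{0}$ is, if anything, more careful than the published argument.
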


This comparison result is a mere consequence of the following well known result in the binary hypotheses detection problem, known as \textit{theorem of irrelevance} \cite{JM-IM:65} and the \textit{invariance of MAP decision rule} \cite{JM-DC:78}. 

\begin{lemma}{\bf \textit{(Error probability of the MAP detector: dependent measurements)}}\label{lma: dependent_measurements_poe} 
	Let $M_1$ and $M_2$ be any two arbitrary simple hypotheses with non-zero priors. Let $\delta_1$ be the error probability of a MAP detector relying on the measurement ${\bf Y} \in \mathbb{R}^{p_1}$, and $\delta_2$ be such a quantity associated with the measurement ${\bf Z}=g({\bf Y})+{\bf v}$, where $g(.) : \mathbb{R}^{p_1} \to \mathbb{R}^{p_2}$ and ${\bf v}$ is stochastically  independent of the hypotheses. Then, $\delta_1\leq \delta_2$.
\end{lemma}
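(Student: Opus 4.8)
The plan is to reduce the claim to two standard facts about Bayes-optimal (MAP) detection: that the MAP rule minimizes the average error probability among all decision rules based on a given observation, and that enlarging the observation can never increase this minimum. Concretely, I would introduce the augmented observation $({\bf Y}, {\bf Z})$ and let $\delta_{12}$ denote the error probability of the MAP detector based on $({\bf Y}, {\bf Z})$. I would then prove the two inequalities $\delta_1 = \delta_{12}$ and $\delta_{12} \le \delta_2$, whose combination yields $\delta_1 \le \delta_2$.

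First I would establish the conditional-independence structure $M \to {\bf Y} \to {\bf Z}$. Since ${\bf Z} = g({\bf Y}) + {\bf v}$ with ${\bf v}$ independent of the hypotheses (and, as independent sensor noise, of ${\bf Y}$), the conditional law of ${\bf Z}$ given ${\bf Y}={\bf y}$ is that of $g({\bf y}) + {\bf v}$ and does not depend on which hypothesis holds. Hence $\Pr[M_i \mid {\bf Y}, {\bf Z}] = \Pr[M_i \mid {\bf Y}]$ for $i \in \{1,2\}$. By the theorem of irrelevance, ${\bf Z}$ carries no information about the hypothesis beyond what is already in ${\bf Y}$; and by the invariance of the MAP decision rule, which depends on the data only through the posterior, the MAP rule based on $({\bf Y}, {\bf Z})$ agrees almost surely with the MAP rule based on ${\bf Y}$ alone. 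Therefore their error probabilities coincide, i.e. $\delta_{12} = \delta_1$.

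Next I would use monotonicity of the Bayes error in the available information. Any decision rule that uses only ${\bf Z}$ is in particular a rule that uses $({\bf Y}, {\bf Z})$, obtained by discarding ${\bf Y}$; in particular the MAP rule on ${\bf Z}$, which attains $\delta_2$, is one such rule. Since the MAP detector on $({\bf Y}, {\bf Z})$ minimizes the error probability $\sum_{i} \pi_i \Pr[\widehat H \ne M_i \mid M_i]$ over all detectors of $({\bf Y}, {\bf Z})$ — the minimum being attained pointwise by selecting the hypothesis of largest posterior — we get $\delta_{12} \le \delta_2$. Combining with the previous step gives $\delta_1 = \delta_{12} \le \delta_2$, as claimed.

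The main obstacle is the first step: one must verify $\Pr[M_i \mid {\bf Y}, {\bf Z}] = \Pr[M_i \mid {\bf Y}]$ rigorously from the stated independence of ${\bf v}$, taking care that the noise is independent of the observation ${\bf Y}$ and not merely of the labels, and then argue that equality of the posteriors forces equality of the MAP rules and hence of their error probabilities. The second step is essentially bookkeeping once the Bayes optimality of MAP is invoked. Notably, no Gaussianity, and no structure on the map $g(\cdot)$, is required; the argument is purely measure-theoretic and therefore applies to the noiseless/noisy cutset comparison in Theorem \ref{thm: network_theoretic_result_noiseless} upon the appropriate identification of ${\bf Y}$, $g$, and ${\bf v}$.
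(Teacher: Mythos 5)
Your proposal is correct and follows essentially the same route as the paper: both introduce the augmented observation $({\bf Y},{\bf Z})$, show that the MAP detector on the pair performs identically to the MAP detector on ${\bf Y}$ alone (the paper by showing the likelihood ratios of $({\bf Y},{\bf Z})$ and ${\bf Y}$ coincide because ${\bf v}$ is independent of the hypotheses, you equivalently via equality of the posteriors under the Markov structure $M\to{\bf Y}\to{\bf Z}$), and then invoke Bayes optimality of the MAP rule on the richer observation to get $\delta_{12}\leq\delta_2$. The caveat you flag --- that one really needs ${\bf v}$ independent of $({\bf Y},M_i)$, not merely of the labels --- is the same implicit assumption made in the paper's step (a).
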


From Lemma \ref{lma: dependent_measurements_poe}, it also follows that Theorem \ref{thm: network_theoretic_result_noiseless} holds true even (i) for the case of  non-Gaussian input and measurements (provided that the joint density exists), and (ii) if the set $\cal P$ is replaced with ${\cal P} \cup \widetilde {\cal C}_d$, where $ \widetilde {\cal C}_d\subseteq {\cal C}_d$.

Theorem \ref{thm: network_theoretic_result_noiseless} implies that, in the absence of noise, nodes near the input location
achieve better detection performance compared
to those far away from the inputs, \textit{irrespective of the edge weights in the adjacency matrix $G$ and the measurement horizon $N$}. Here, the notion of closeness is to be understood in the sense of \textit{node cutsets}, since, $d\leq \text{dist}({\cal K, \cal C}_d)<\text{dist}({\cal K, \cal P})$. Thus, if node cutsets exist in a graph and the measurements are noiseless, one should always place sensors on the cutsets. Thus, if a budget is associated with the sensor placement, it makes sense to find a cutset ${\cal C}_d$ of minimum cardinality. 

\begin{proposition}{\bf \textit{(Error probability of the oracle detector)}}\label{prop: oracle}  Consider the general detection problem \eqref{eq: input hypothesis}, and let $\delta_1$ be the error probability of a MAP detector which can directly access the inputs ${\bf w}[k]$, $k=0,\ldots,N$. For any sensor set $\cal J$, let $\delta_2$ and $\delta_3$ be the error probabilities associated with the noiseless $(\sigma^2_v=0)$ and noisy $(\sigma^2_v>0)$ measurements ${\bf Y}_{\cal J}$ \eqref{eq: measurements recorded}, respectively. Then, $\delta_1\leq \delta_2\leq \delta_3$. 
\end{proposition}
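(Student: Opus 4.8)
The plan is to derive the chain $\delta_1 \le \delta_2 \le \delta_3$ as two successive applications of Lemma \ref{lma: dependent_measurements_poe}, each time viewing the coarser measurement as a (possibly noisy) processing of a finer one, so that ``less information'' translates into ``higher error probability.''

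First, for $\delta_1 \le \delta_2$, I would collect the inputs accessed by the oracle into the stacked vector ${\bf W}=[{\bf w}^\transpose[0]\ \cdots\ {\bf w}^\transpose[N]]^\transpose$. Unrolling the recursion \eqref{eq: system} and reading out through $C$ with $\sigma^2_v=0$, the noiseless measurement admits the representation ${\bf Y}_{\cal J}=g({\bf W})+\mathcal{O}\,{\bf x}[0]$, where $g$ is the linear input-to-output map induced by the impulse-response matrix $\mathcal{F}$ of Proposition \ref{prop: moments of measurements} and $\mathcal{O}\,{\bf x}[0]$ is the free response. Because $\{{\bf x}[0],{\bf w}[k],{\bf v}[k]\}$ is jointly Gaussian and uncorrelated, ${\bf x}[0]\sim\mathcal{N}({\bf 0},\Sigma_0)$ is independent of ${\bf W}$, and its law does not depend on the hypothesis; hence $\mathcal{O}\,{\bf x}[0]$ plays exactly the role of the hypothesis-independent perturbation ${\bf v}$ in Lemma \ref{lma: dependent_measurements_poe}. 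Applying that lemma with ${\bf Y}={\bf W}$ (the oracle's datum) then yields $\delta_1\le\delta_2$.

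Second, for $\delta_2 \le \delta_3$, I would observe that the noisy measurement is literally the noiseless one plus additive sensor noise: stacking \eqref{eq: measurement model} gives ${\bf Y}_{\cal J}^{\,\sigma^2_v>0}={\bf Y}_{\cal J}^{\,\sigma^2_v=0}+{\bf V}$, where ${\bf V}=[{\bf v}^\transpose[1]\ \cdots\ {\bf v}^\transpose[N]]^\transpose\sim\mathcal{N}({\bf 0},\sigma^2_v I)$ is, again by the uncorrelatedness assumption, independent of the hypothesis. Taking $g$ to be the identity map and ${\bf v}={\bf V}$ in Lemma \ref{lma: dependent_measurements_poe}, now with the noiseless measurement in the role of ${\bf Y}$, gives $\delta_2\le\delta_3$. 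Concatenating the two inequalities completes the argument.

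The only delicate point, and the step I expect to be the main obstacle, is verifying the independence hypotheses of Lemma \ref{lma: dependent_measurements_poe} in the first inequality: one must confirm that the free-response term $\mathcal{O}\,{\bf x}[0]$ is genuinely \emph{independent} of both the hypothesis and the input ${\bf W}$, not merely uncorrelated with them. This is precisely where the joint-Gaussian, mutually-uncorrelated modeling of $\{{\bf x}[0],{\bf w}[k],{\bf v}[k]\}$ is essential, since for Gaussian vectors uncorrelatedness upgrades to independence; without Gaussianity one would have to impose this independence directly, in the spirit of the non-Gaussian extension noted after Lemma \ref{lma: dependent_measurements_poe}.
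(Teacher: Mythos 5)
Your proposal is correct and follows exactly the route the paper intends: the paper states Proposition \ref{prop: oracle} without a printed proof, treating it (like Theorem \ref{thm: network_theoretic_result_noiseless}) as a direct consequence of Lemma \ref{lma: dependent_measurements_poe}, applied once with ${\bf Y}_{\cal J}=\mathcal{F}{\bf w}_{0:N-1}+\mathcal{O}{\bf x}[0]$ viewed as a processed version of the oracle's data with hypothesis-independent perturbation $\mathcal{O}{\bf x}[0]$, and once with the noisy output as the noiseless output plus ${\bf v}_{1:N}$. Your remark that the uncorrelatedness assumption must be upgraded to independence via joint Gaussianity for the lemma's hypothesis to hold is a fair and correctly resolved point of care.
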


\smallskip 
Proposition \ref{prop: oracle} states that sensor noise degrades the performance of the MAP detector (this fact is also illustrated in Example \ref{exam: classical detection setup}). It also implies that measuring the inputs directly is always better than measuring the noisy/noiseless states (dynamics) of the nodes. Of course, given this fact, it is always beneficial to place the sensors at the input nodes, rather than dealing with the \textit{node cutsets} and the {\it partitioned sets}. 

\subsection{Noisy measurements}

We now consider the case of noisy measurements ($\sigma^2_v>0$). Notice that our results will be specific to the MAP and LD-MAP detectors associated with the \textit{mean} and \textit{covariance shift models}, respectively. Possible extensions to the general detection problem \eqref{eq: input hypothesis} are mentioned in the remarks. We now introduce some additional notation. For a cutset ${\cal C}_d$, let ${\bf x}_c[k]$, ${\bf x}_s[k]$, and ${\bf x}_p[k]$ denote the states of the node sets ${\cal C}_d$, $\cal S$, and $\cal P$, respectively. Let $M$ be a permutation matrix such that ${\bf x}[k]=M[{\bf x}_s[k]^\transpose, {\bf x}_c[k]^\transpose, {\bf x}_p[k]^\transpose]^\transpose$, where ${\bf x}[k]$ is the state vector of \eqref{eq: system}. Then, from \eqref{eq: system} it also follows that 
\begin{align}\label{eq: decomposed system}
\begin{bmatrix}
{\bf x}_s[k+1]\\
{\bf x}_c[k+1]\\
{\bf x}_p[k+1]
\end{bmatrix}\!=\!
\underbrace{\begingroup
	\setlength\arraycolsep{1.9 pt}\begin{bmatrix}
	G_{ss} & G_{sc} & 0\\
	G_{cs} & G_{cc} & G_{cp}\\
	0 & G_{pc} & G_{pp}
	\end{bmatrix}\endgroup}_{M^{-1}GM}\begin{bmatrix}
{\bf x}_s[k]\\
{\bf x}_c[k]\\
{\bf x}_p[k]
\end{bmatrix}\!+\!\underbrace{\begin{bmatrix}
	{\bf w}_s[k]\\{\bf 0} \\{\bf 0}
	\end{bmatrix}}_{M^{-1}\Pi{\bf w}[k]}. 
\end{align}
From the above relation, note that the states of ${\cal C}_d$ serve as an input for the states of partitioned nodes set $\cal P$, i.e., 
\begin{align}\label{eq: subsystem}
{\bf x}_p[k+1]=G_{pp}{\bf x}_p[k]+G_{pc}{\bf x}_c[k]. 
\end{align}

Based on the transfer function matrix of subsystem \eqref{eq: subsystem}, we now state a result that is analogous to Theorem \ref{thm: network_theoretic_result_noiseless}, for the case $\sigma_v^2>0$. 
\begin{theorem}{\bf \textit{(Performance of sensors on the node cutset vs the partitioned set: noisy measurements)}}\label{thm: network_theoretic_result_noisy} Let $G_{pp}$ and $G_{pc}$ be as in \eqref{eq: decomposed system}, and assume that $\mathrm{spec}(G_{pp})\cap \{z\in\mathbb{C}: |z|=1\}=\phi$. Let $\overline\rho(z)$ and $\underline\rho(z)$ be the maximum and minimum singular values of $T_s(z)=(zI-G_{pp})^{-1}G_{pc}$, respectively.
	Let $\mbP_{e_m}({\cal C}_d)$ in \eqref{eq: P_em}  and $\mbP_{e_v}({\cal C}_d)$ in \eqref{eq: P_ev} be the error probabilities obtained using the noisy measurements ($\sigma_v^2>0$) from the cutset ${\cal C}_d$. Instead, let $\mbP_{e_m}({\cal P})$ and $\mbP_{e_v}({\cal P})$ be the error probabilities associated with the partitioned set $\cal P$. Then we have: 
	\begin{enumerate}
		\item  [1a)] If $\overline\rho(1)\leq 1$, then $\mbP_{e_m}({\cal C}_d)\!\leq\!\mbP_{e_m}({\cal P})$. 
		\vskip 0.2em
		\item  [1b)] If $\underline\rho(1)>1$, then $\mbP_{e_m}({\cal C}_d)\!>\!\mbP_{e_m}({\cal P})$.
		\vskip 0.2em
		\item [2a)] If $\sup_{|z|=1}\overline\rho(z)\leq1$ then $\mbP_{e_v}({\cal C}_d)\!\leq\!\mbP_{e_v}({\cal P})$. 
		\vskip 0.2em
		\item [2b)] If $\inf_{|z|=1}\underline\rho(z)>1$, then $\mbP_{e_v}({\cal C}_d)\!>\!\mbP_{e_v}({\cal P})$.
	\end{enumerate}
\end{theorem}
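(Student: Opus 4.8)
The plan is to reduce all four claims to comparisons of signal-to-noise ratios via Proposition~\ref{prop: error probability vs eta and R}: since $\mbP_{e_m}$ and $\mbP_{e_v}$ are decreasing in $\eta$ and $R$, it suffices to rank the SNR of the cutset sensors against that of the partitioned sensors. The structural ingredient is a cascade factorization. Writing $T_{{\cal C}_d}(z)=C_c(zI-G)^{-1}\Pi$ and $T_{\cal P}(z)=C_p(zI-G)^{-1}\Pi$ for the input-to-cutset and input-to-partition transfer matrices ($C_c,C_p$ selecting the respective nodes), I would first observe that, since the $(3,1)$ block of $M^{-1}GM$ in \eqref{eq: decomposed system} vanishes, the partition states obey \eqref{eq: subsystem}; hence in the $z$-domain (using ${\bf x}[0]=0$) ${\bf x}_p(z)=T_s(z){\bf x}_c(z)$, and therefore
\begin{align*}
T_{\cal P}(z)=T_s(z)\,T_{{\cal C}_d}(z),\qquad T_s(z)=(zI-G_{pp})^{-1}G_{pc}.
\end{align*}
The hypothesis $\mathrm{spec}(G_{pp})\cap\{|z|=1\}=\phi$ guarantees $T_s(z)$ is bounded on the unit circle, so $\overline\rho(z),\underline\rho(z)$ are well defined there. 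This factorization is the precise analytic counterpart of requirement (ii) of Definition~\ref{def: sep cut set}.

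For the mean-shift cases only the value at $z=1$ matters, since $\eta^2$ in \eqref{eq: eta asymp} enters $T$ only through $L=T(1)\Sigma_c^{1/2}$. The factorization gives $L_{\cal P}=T_s(1)L_{{\cal C}_d}$, hence $L_{\cal P}^\transpose L_{\cal P}=L_{{\cal C}_d}^\transpose T_s(1)^\transpose T_s(1)L_{{\cal C}_d}$. I would then use that $A\mapsto\widetilde{\bs\mu}_\Delta^\transpose(A+\sigma_v^2I)^{-1}A\,\widetilde{\bs\mu}_\Delta=\|\widetilde{\bs\mu}_\Delta\|^2-\sigma_v^2\widetilde{\bs\mu}_\Delta^\transpose(A+\sigma_v^2I)^{-1}\widetilde{\bs\mu}_\Delta$ is nondecreasing in the Loewner order on $A=L^\transpose L\succeq0$. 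When $\overline\rho(1)\le1$ we have $T_s(1)^\transpose T_s(1)\preceq I$, so $L_{\cal P}^\transpose L_{\cal P}\preceq L_{{\cal C}_d}^\transpose L_{{\cal C}_d}$ and $\eta({\cal P})\le\eta({\cal C}_d)$, giving 1a); when $\underline\rho(1)>1$ we have $T_s(1)^\transpose T_s(1)\succeq\underline\rho(1)^2I\succ I$, so $L_{\cal P}^\transpose L_{\cal P}\succ L_{{\cal C}_d}^\transpose L_{{\cal C}_d}$ and the reversed strict inequality of 1b).

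For the covariance-shift cases the SNR $R$ in \eqref{eq: Rb asymp} is governed by $\|T(z)\Sigma_1^{1/2}\|_\infty$. Writing $M(z)=T_{{\cal C}_d}(z)\Sigma_1^{1/2}$ and applying submultiplicativity of the spectral norm pointwise on $|z|=1$, namely $\underline\rho(z)\,\sigma_{\max}(M(z))\le\sigma_{\max}(T_s(z)M(z))\le\overline\rho(z)\,\sigma_{\max}(M(z))$, and then taking the essential supremum, gives
\begin{align*}
\Big(\inf_{|z|=1}\underline\rho(z)\Big)\|M\|_\infty\le\|T_{\cal P}\Sigma_1^{1/2}\|_\infty\le\Big(\sup_{|z|=1}\overline\rho(z)\Big)\|M\|_\infty.
\end{align*}
Hence $\sup_z\overline\rho(z)\le1$ yields $R({\cal P})\le R({\cal C}_d)$ (case 2a), while $\inf_z\underline\rho(z)>1$ yields $R({\cal P})>R({\cal C}_d)$ (case 2b); both conclusions then follow from Proposition~\ref{prop: error probability vs eta and R}.

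The main obstacle is the \emph{strict} inequalities in 1b) and 2b). In 1b) the passage $L_{\cal P}^\transpose L_{\cal P}\succ L_{{\cal C}_d}^\transpose L_{{\cal C}_d}$ through the merely monotone SNR map is strict only if $L_{{\cal C}_d}=T_{{\cal C}_d}(1)\Sigma_c^{1/2}$ has full column rank, i.e.\ the cutset captures all $r$ input directions; otherwise the quadratic form in $\widetilde{\bs\mu}_\Delta$ can be flat along $\mathrm{Null}(L_{{\cal C}_d})$ and one recovers only $\ge$. Similarly, the lower singular-value bound $\sigma_{\max}(T_sM)\ge\underline\rho\,\sigma_{\max}(M)$ uses $\|T_s(z)u\|\ge\underline\rho(z)\|u\|$, which requires $T_s(z)$ to be injective; this is exactly the regime in which $\underline\rho(z)>1$ is meaningful, and $\|M\|_\infty>0$ holds as soon as the cutset is reachable from the inputs. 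I would therefore state the reachability/full-rank condition explicitly, and take care to bound the scalar factors $\overline\rho(z),\underline\rho(z)$ by their sup/inf over the circle only after the pointwise submultiplicative step.
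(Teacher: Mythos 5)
Your proposal is correct and follows essentially the same route as the paper: the paper likewise reduces all four claims to SNR comparisons via Proposition~\ref{prop: error probability vs eta and R}, uses the cascade relation ${\bf x}_p[z]=T_s(z){\bf x}_c[z]$ (your factorization $T_{\cal P}=T_s T_{{\cal C}_d}$), the Loewner-monotone identity $[L^\transpose L+\sigma^2_vI]^{-1}L^\transpose L=I-\sigma^2_v[L^\transpose L+\sigma^2_vI]^{-1}$ for the mean-shift case, and pointwise submultiplicativity followed by the essential supremum for the covariance case. Your caveat that the strict inequalities in 1b) and 2b) need full column rank of $L_{{\cal C}_d}$ (resp.\ injectivity of $T_s(z)$ and reachability of the cutset) is a genuine point of care that the paper glosses over --- indeed its own proof of 1b) ends by concluding only $\mbP_{e_m}({\cal C}_d)\geq\mbP_{e_m}({\cal P})$.
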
 

\smallskip 
Hence, in the presence of noise, depending upon the entries in the matrix $[G_{pp}\, G_{pc}]$, measuring the cutset ${\cal C}_d$ might not be always optimal for the purposes of the detection. Instead, in the noiseless case, Theorem \ref{thm: network_theoretic_result_noiseless} states that measuring the cutset is always optimal, irrespective of the entries in $G$. We now explain the reason behind this contrasting behaviors.

Notice that, the quantities $\sup$ and $\inf$ of $\overline\rho(z)$ and $\underline\rho(z)$ in Theorem \ref{thm: network_theoretic_result_noisy}, respectively, are the maximum and minimum  input to output gains of the transfer function matrix $T_s(z)$, associated with the system \eqref{eq: subsystem}. Theorem \ref{thm: network_theoretic_result_noisy} says that, if the gain between the states ${\bf x}_c[k]$ and the states ${\bf x}_p[k]$ is high (low), the detection performance with sensors in $\cal P$ should be better (worse) that that of ${\cal C}_d$. In fact, recall from Lemma \ref{lma: special cases error probability} that the detectors associated with the noisy measurements of ${\cal C}_d$ and ${\cal P}$, respectively, depends on the SNRs of ${\bf x}_c[k]$ and ${\bf x}_c[k]$ (plus the sensor noise), respectively. Since ${\bf x}_p[z]=T_s(z){\bf x}_c[z]$, it is clear that the SNRs are influenced by the gains of $T_s(z)$. In particular, a higher gain increases the SNR of the detector associated with ${\cal P}$, which results in a better performance compared to the detector associated with that of ${\cal C}_d$. 

The above reasoning also holds in the case of noiseless measurements, however, the transfer function gain do not influence MAP detector's performance. In fact, this gain gets canceled in the error probability computations (this can be clearly seen in Example \ref{exam: classical detection setup} by interpreting $\alpha$ as the gain). Theorem \ref{thm: network_theoretic_result_noisy} provides conditions for placing sensors on or away from the cutset nodes. For general adjacency matrix, one needs to rely on the software (based on LMI based inequalities) to validate those conditions. However, for non-negative adjacency matrices, the conditions for placing (or not) sensors on the cutset nodes can be stated based on algebraic conditions on the entries of the adjacency matrix. In fact, we have the following result: 

\smallskip 
\begin{lemma}{\bf \textit{(Non-negative adjacency matrix)}}\label{lma: non-negative matrix}
	Let the matrix $G$ in \eqref{eq: system} be non-negative, and $\widetilde G=[G_{pp}\, G_{pc}]\in \mathbb{R}^{m_1\times n_1}$, where $G_{pp}$ and $G_{pc}$ are defined in \eqref{eq: subsystem}. 
	\begin{enumerate}
		\item  If $\|\widetilde G\|_{\infty}\!\leq\! 1/\sqrt{m_1}$, then we have $\mbP_{e_m}({\cal C}_d)\!\leq\!\mbP_{e_m}({\cal P})$ and $\mbP_{e_v}({\cal C}_d)\!\leq\!\mbP_{e_v}({\cal P})$. 
		\item  If $n_1=1$, and all row sums of $\widetilde G$ are greater than one, then $\mbP_{e_m}({\cal C}_d)\!\geq\!\mbP_{e_m}(\widetilde {\cal P})$ and $\mbP_{e_v}({\cal C}_d)\!\geq\!\mbP_{e_v}(\widetilde {\cal P})$, where $\widetilde {\cal P}\subseteq {\cal P}$. 
	\end{enumerate}
\end{lemma}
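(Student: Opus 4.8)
The plan is to reduce both claims to the singular-value conditions of Theorem~\ref{thm: network_theoretic_result_noisy}: for part~1 I would verify the hypotheses of items~1a) and~2a) (namely $\overline\rho(1)\le 1$ and $\sup_{|z|=1}\overline\rho(z)\le 1$), and for part~2 the hypotheses of items~1b) and~2b) ($\underline\rho(1)>1$ and $\inf_{|z|=1}\underline\rho(z)>1$). The non-negativity of $G$ is what lets these spectral conditions be read off directly from the entries of $\widetilde G=[G_{pp}\;G_{pc}]$. Throughout I use that, under Assumption~\ref{assump: error probability assumption}, the system is stable, so $\overline\lambda(G)<1$; since $G\ge 0$ and $G_{pp}$ is a principal submatrix, Perron--Frobenius monotonicity of the spectral radius for non-negative matrices gives $\overline\lambda(G_{pp})\le\overline\lambda(G)<1$. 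Hence $(zI-G_{pp})^{-1}$ is analytic on and outside the unit circle and $(I-G_{pp})^{-1}=\sum_{k\ge 0}G_{pp}^k\ge 0$.

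For part~1, the first step is an entrywise domination bound. Expanding $T_s(z)=\sum_{k\ge 0}z^{-(k+1)}G_{pp}^kG_{pc}$ and using $G_{pp}^kG_{pc}\ge 0$, for every $z$ with $|z|=1$ one gets the entrywise inequality $|T_s(z)|\le\sum_{k\ge 0}G_{pp}^kG_{pc}=(I-G_{pp})^{-1}G_{pc}=T_s(1)$, so $\|T_s(z)\|_\infty\le\|T_s(1)\|_\infty$ for all $|z|=1$. The second step bounds the row sums of the non-negative matrix $T_s(1)$: writing $\mathbf 1$ for the all-ones vector, the hypothesis $\|\widetilde G\|_\infty\le\rho$ reads $G_{pp}\mathbf 1+G_{pc}\mathbf 1\le\rho\mathbf 1$, whence $G_{pc}\mathbf 1\le\rho\mathbf 1-G_{pp}\mathbf 1$ and, applying the non-negative operator $(I-G_{pp})^{-1}$, $T_s(1)\mathbf 1\le(\rho-1)(I-G_{pp})^{-1}\mathbf 1+\mathbf 1\le\rho\mathbf 1$ once $\rho\le 1$ (using $(I-G_{pp})^{-1}\mathbf 1\ge\mathbf 1$). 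Thus $\|T_s(1)\|_\infty\le\|\widetilde G\|_\infty$. Finally I convert to the spectral norm via $\overline\rho(z)=\|T_s(z)\|_2\le\sqrt{m_1}\,\|T_s(z)\|_\infty$, so that $\overline\rho(z)\le\sqrt{m_1}\,\|\widetilde G\|_\infty\le 1$ for all $|z|=1$ under the hypothesis $\|\widetilde G\|_\infty\le 1/\sqrt{m_1}$. This gives both $\overline\rho(1)\le 1$ and $\sup_{|z|=1}\overline\rho(z)\le 1$, and items~1a) and~2a) of Theorem~\ref{thm: network_theoretic_result_noisy} deliver the two inequalities.

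For part~2, the condition $n_1=1$ collapses the subsystem to a single downstream node $\widetilde{\mathcal P}=\{p\}$ driven by the cutset, so that $T_s(z)=(z-a)^{-1}b$ with $a=G_{pp}\ge 0$ (the self-weight, $a<1$) and $b=G_{pc}\ge 0$ the incoming-edge vector from $\mathcal C_d$. Its unique nonzero singular value is $\overline\rho(z)=\underline\rho(z)=\|b\|_2/|z-a|$. For the mean-shift case I evaluate at $z=1$: $\underline\rho(1)=\|b\|_2/(1-a)$, and the hypothesis that the row sum exceeds one, $a+\|b\|_1>1$, gives $\underline\rho(1)>1$, so item~1b) applies. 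For the covariance case the relevant quantity is $\inf_{|z|=1}\underline\rho(z)=\|b\|_2/\max_{|z|=1}|z-a|=\|b\|_2/(1+a)$, which must be shown to exceed one before invoking item~2b). Because the comparison in Theorem~\ref{thm: network_theoretic_result_noisy} is with the full partitioned set while the claim is against $\widetilde{\mathcal P}\subseteq\mathcal P$, I apply the theorem to the reduced subsystem whose partitioned set is $\widetilde{\mathcal P}$; that this reduced comparison is legitimate, and that enlarging $\widetilde{\mathcal P}$ only helps, is exactly the monotonicity guaranteed by Lemma~\ref{lma: dependent_measurements_poe}.

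The routine part is part~1, which is just the Neumann-series positivity, the entrywise domination $|T_s(z)|\le T_s(1)$, and the elementary norm inequality $\|\cdot\|_2\le\sqrt{m_1}\,\|\cdot\|_\infty$. The delicate step is the covariance case of part~2: the worst-case frequency is $z=-1$, where $|z-a|=1+a$, so the row-sum condition $a+\|b\|_1>1$ only yields $\|b\|_2/(1+a)>1$ cleanly when the self-weight $a$ vanishes (or when $\mathcal C_d$ is a single node, so that $\|b\|_2=\|b\|_1$); pinning down precisely which reading of ``row sums greater than one'' makes $\inf_{|z|=1}\underline\rho(z)>1$ hold, together with justifying the spectral-radius monotonicity $\overline\lambda(G_{pp})\le\overline\lambda(G)$ needed to launch part~1, are the two points I would treat most carefully.
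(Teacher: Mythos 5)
Your part 1 is correct and follows essentially the paper's own route: the paper also establishes $\sup_{|z|=1}\overline\rho(z)=\overline\rho(1)$ by entrywise domination of the Neumann series of $(zI-G_{pp})^{-1}G_{pc}$, and then passes from the row-sum bound to the spectral norm via $\|\cdot\|_2\le\sqrt{m_1}\,\|\cdot\|_\infty$ before invoking items 1a) and 2a) of Theorem \ref{thm: network_theoretic_result_noisy}. The only difference is in how the bound $\|T_s(1)\|_\infty\le\|\widetilde G\|_\infty$ is obtained: you apply the non-negative operator $(I-G_{pp})^{-1}$ directly to the row-sum inequality and use $(I-G_{pp})^{-1}\mathbf 1\ge\mathbf 1$, whereas the paper iterates a block upper-triangular matrix against the all-ones vector and takes a limit. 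Your version is shorter and equally valid, so no complaint there; likewise your Perron--Frobenius justification of $\overline\lambda(G_{pp})<1$ matches the paper's.

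Part 2 is where the genuine gap sits, and you have located it but not closed it. First, you read $n_1=1$ as collapsing the \emph{partitioned} side to a single node, whereas the paper states explicitly, in the discussion following the lemma, that $n_1=1$ means $\text{card}(\mathcal{C}_d)=1$: then $G_{pc}$ is a column, $T_s(z)$ is a tall $m_1\times 1$ matrix, and $\underline\rho(z)=\overline\rho(z)=\|T_s(z)\|_2$. Under your reading $T_s(z)$ is a fat row vector, and the lower bound $\|{\bf x}_p[z]\|_2\ge\underline\rho(z)\|{\bf x}_c[z]\|_2$ on which items 1b) and 2b) rest degenerates on the kernel of $T_s(z)$, so the reduction does not even launch. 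With the paper's reading the mean-shift half is immediate: row sums greater than one give $G_{pc}>(I-G_{pp})\mathbf 1$ entrywise, hence $T_s(1)=(I-G_{pp})^{-1}G_{pc}>\mathbf 1$ and $\underline\rho(1)=\|T_s(1)\|_2>\sqrt{m_1}\ge 1$, so 1b) applies. The covariance half is exactly the obstruction you flag: 2b) requires $\inf_{|z|=1}\underline\rho(z)>1$, and for a non-negative subsystem the gain is \emph{maximized} at $z=1$ and is typically smallest near $z=-1$, so a condition verified only at $z=1$ gives no control over the infimum. The paper's appendix does not resolve this either --- it proves only part 1 and dismisses part 2 as ``similar analysis'' --- so you cannot be faulted for failing to reproduce an argument the authors never wrote; but as submitted your proposal does not establish the $\mbP_{e_v}$ inequality of part 2, and the role of the subset $\widetilde{\mathcal P}\subseteq\mathcal P$ (which is presumably what must absorb the difficulty, e.g.\ via Lemma \ref{lma: dependent_measurements_poe}) is left unexplained by your row-sum computation.
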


The inequality $\mbP_{e_m}({\cal C}_d)\!\leq\!\mbP_{e_m}({\cal P})$ can be obtained even without the non-negativity assumption on $G$. However, this might not be true for the case of $\mbP_{e_v}(\cdot)$. Thus, by ensuring that the maximum row sum of $\widetilde G$ is bounded by $1/\sqrt{m_1}$ (here $m_1$ refers to the cardinality of the partitioned set $\cal P$), one can guarantee that the detection performance of sensors on the cutset is always superior than that of the sensors on the partitioned nodes. The assumption $n_1=1$ in part 2) of above lemma implies that $\text{card}({\cal C}_d)=1$. For arbitrary $n_1$, the condition row sums of $\widetilde G$ greater than one may not be sufficient, and more assumptions on $G$ are required to handle this case. For instance, when $G$ is a diagonally dominant matrix, required sufficient conditions can be obtained using the lower bounds in \cite{JMV:75}. Finally, we notice that the bounds presented in Lemma \ref{lma: non-negative matrix} depends on the cardinality of the node sets, and hence, our results on networks with non-negative edge weights may be conservative when these cardinalities are large. 

{The network-theoretic analysis of the MAP and the LD-MAP
  detectors developed in this section can also be used to inform the
  placement of sensors for detection. The results show that sensors
  placed close to the stochastic inputs are effective for batch
  detection. More precisely, measurements on separating cutsets of the
  network necessarily outperform downstream sensing strategies. Thus,
  a strategy for the placement of few sensors is to find small node
  cutsets that isolate the input nodes. The design of these algorithms
  falls outside the scope of this paper and is left as the subject of
  future research.}

\smallskip 
\begin{remark}{\bf \textit{(Extension of network theoretic results to the other detectors: noisy measurements)}}\label{rmk: non-Gaussian} In the cases where the analytical error probability calculation is difficult, eg., the general Gaussian or non-Gaussian detection problem and the covariance shift model with arbitrary covariance matrix structures, one relies on the Chernoff type bounds (for eg., see \cite{HLV:04}) to quantify the detection performance. In both the cases, i.e., evaluating the performance directly or via bounds, Theorem  \ref{thm: network_theoretic_result_noisy} holds true for any detector whose performance (resp. bounds) is monotonically increasing in $\|T(z)=C(zI-G)^{-1}\Pi\|_M$, for some suitable $M\succ 0$. For instance, the Chernoff bounds on the error probability of the general Gaussian detection problem \eqref{eq: input hypothesis} depend on the moment generating function (mgf) of the sufficient statistic of the MAP detector, which ultimately depends on the filtered mean and covariance matrices \eqref{eq: moments}, and our analysis becomes applicable. In the non-Gaussian case, the mgf might depend on other moments as well, and extending our analysis to this case will be challenging.  \QEDB
\end{remark}

\subsection{Single input single output (SISO) line networks}\label{subsec: toeplitz line networks}  

In this section, we validate our cutset based results, that we presented in previous section, for the case of line networks by explicitly expressing the error probabilities as a function of the entires of $G$, and then compare the performance of sensors on ${\cal C}_d$ versus sensors on ${\cal P}$. We restrict our attention to the SISO systems. 

We assume that a stochastic input enters the network through a fixed node $q\in \{1,\ldots,n\}$, and we allow any node $l\in \{1,\ldots,n\}$ with $\text{dist}(l,q)\geq d$ for sensor placement. For this setup, we assume that probabilities $\mbP_{e_m}(l)$ and $\mbP_{e_v}(l)$ are obtained by substituting the SNRs $\eta_s$ \eqref{eq: identical eta asymp} and $R_s$ \eqref{eq: identical R asymp} in the expressions of \eqref{eq: P_em} and \eqref{eq: P_ev}, respectively. Notice that, in contrast to the previous analysis, in which we assume $\Sigma_2=0$ (see Assumption \ref{assump: error probability assumption}), in this section we do not assume $\sigma_2^2=0$ in $R_s$. For the ease of presentation, we assume the cutset to be a singleton set, i.e., ${\cal C}_d=\{j\}$. The following proposition is an extension of Lemma \ref{lma: non-negative matrix} for our SISO system setup with the revised error probabilities. 

\begin{figure}
	\centering
	\includegraphics[width=1\linewidth]{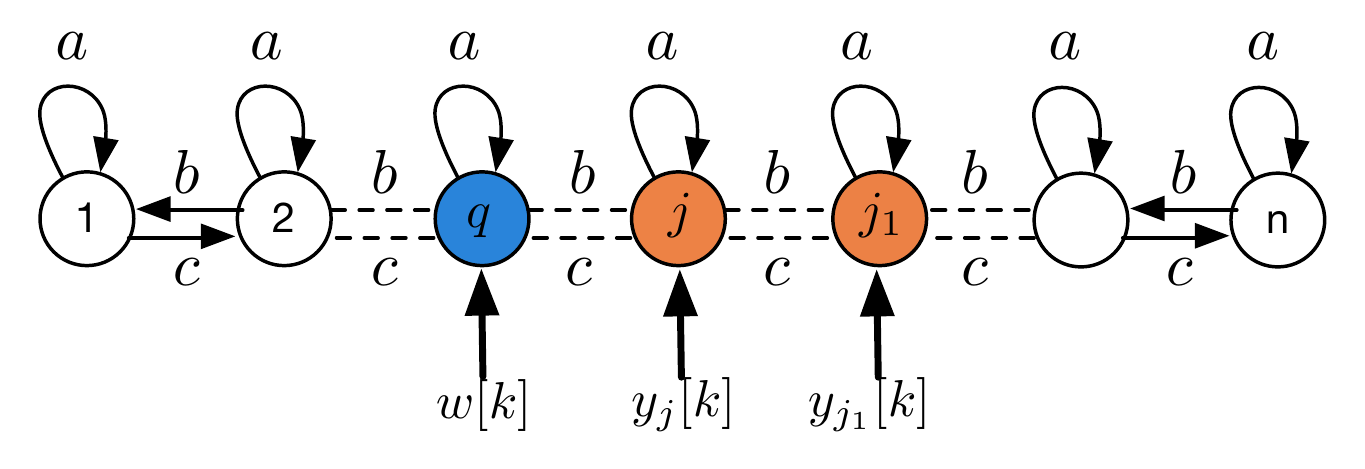}
	\caption{{\footnotesize Toeplitz line network with $n$ nodes. The $q$-th node is injected with the input, and the $j$-th node represents the cutset node.}}
	\label{fig: toeplitz}
\end{figure}

\begin{proposition}\label{ref: prop siso} Let $\widetilde G$ be as in Lemma \ref{lma: non-negative matrix}, and $\sigma^2_v>0$. Let $\{j\}$ and $\cal P$ be the cutset and partitioned sets, resp. If $\|\widetilde G\|_{\infty}\leq 1$, then for any $j_1 \in \cal P$, we have $\mbP_{e_m}(j)\!\leq\!\mbP_{e_m}({j_1})$ and $\mbP_{e_v}(j)\!\leq\!\mbP_{e_v}(j_1)$. The opposite inequality holds true if all row sums of $\widetilde G$ are greater than one. 
\end{proposition}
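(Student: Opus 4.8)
The plan is to reduce the entire statement to a comparison of the scalar DC gains $T_l(1)={\bf e}_l^\transpose(I-G)^{-1}{\bf e}_q$ at the two candidate sensor locations $l\in\{j,j_1\}$, exploiting the non-negativity of $G$. First I would specialize Corollary \ref{cor: special cases error probability} to the SISO setting: for a single sensor at node $l$ the transfer function $T_l(z)={\bf e}_l^\transpose(zI-G)^{-1}{\bf e}_q$ is scalar, so (with $D=1$) the SNRs become $\eta_s^2=N\mu_\Delta^2\,T_l(1)^2/(\sigma_c^2 T_l(1)^2+\sigma_v^2)$ and $R_s=(\sigma_1^2\|T_l\|_\infty+\sigma_v^2)/(\sigma_2^2\|T_l\|_\infty+\sigma_v^2)$, which are respectively increasing in $|T_l(1)|$ and, since $\sigma_1^2>\sigma_2^2$, in $\|T_l\|_\infty$. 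By Proposition \ref{prop: error probability vs eta and R}, $\mbP_{e_m}(l)$ and $\mbP_{e_v}(l)$ are therefore non-increasing in $|T_l(1)|$ and $\|T_l\|_\infty$, respectively.

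The crucial simplification for non-negative $G$ is that the $H_\infty$ norm is attained at $z=1$. Writing $T_l(z)=\sum_{k\ge 0}z^{-(k+1)}[G^k]_{lq}$ (convergent on $|z|=1$ because stability from Assumption \ref{assump: error probability assumption} gives $\overline\lambda(G)<1$) and using $[G^k]_{lq}\ge 0$, I would bound $|T_l(z)|\le\sum_{k\ge0}[G^k]_{lq}=T_l(1)$ for every $|z|=1$, with equality at $z=1$; hence $\|T_l\|_\infty=T_l(1)\ge 0$. Consequently both $\mbP_{e_m}(l)$ and $\mbP_{e_v}(l)$ are non-increasing in the single scalar $T_l(1)$, and the whole proposition reduces to comparing $T_j(1)$ with $T_{j_1}(1)$.

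Next I would factor the DC gain across the cutset. Because $\{j\}$ is a node cutset (Definition \ref{def: sep cut set}), the decomposition \eqref{eq: decomposed system} has a zero $(\mathcal P,\mathcal S)$ block, so the partitioned states are driven solely through $x_j$ via the subsystem \eqref{eq: subsystem}; in the $z$-domain $X_{j_1}(z)=[T_s(z)]_{j_1}X_j(z)$ with $T_s(z)=(zI-G_{pp})^{-1}G_{pc}$, giving $T_{j_1}(1)=[T_s(1)]_{j_1}\,T_j(1)$. Since there is a path from the input node $q$ to $j$ and $G\ge 0$, we have $T_j(1)>0$, so the comparison hinges entirely on whether the non-negative number $[T_s(1)]_{j_1}=[(I-G_{pp})^{-1}G_{pc}]_{j_1}$ is at most or at least $1$.

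Finally I would read off this threshold from the row sums of $\widetilde G=[G_{pp}\,G_{pc}]$, exactly as in Lemma \ref{lma: non-negative matrix}. As a principal submatrix of the stable non-negative $G$, $G_{pp}$ satisfies $\overline\lambda(G_{pp})<1$, so $(I-G_{pp})^{-1}=\sum_{k\ge0}G_{pp}^k\ge I\ge 0$. If $\|\widetilde G\|_\infty\le 1$ then $G_{pp}{\bf 1}+G_{pc}\le{\bf 1}$, i.e. $(I-G_{pp}){\bf 1}\ge G_{pc}$; multiplying by $(I-G_{pp})^{-1}\ge0$ yields $T_s(1)\le{\bf 1}$, whence $[T_s(1)]_{j_1}\le 1$, $T_{j_1}(1)\le T_j(1)$, and so $\mbP_{e_m}(j)\le\mbP_{e_m}(j_1)$ and $\mbP_{e_v}(j)\le\mbP_{e_v}(j_1)$. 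If instead every row sum exceeds one, writing $G_{pc}=(I-G_{pp}){\bf 1}+u$ with $u>0$ gives $T_s(1)={\bf 1}+(I-G_{pp})^{-1}u\ge{\bf 1}+u>{\bf 1}$, so $[T_s(1)]_{j_1}>1$ for every $j_1\in\mathcal P$ and both inequalities reverse. I expect the main obstacle to be the second paragraph: establishing that the $H_\infty$ norm coincides with the DC gain for non-negative systems, since this is what collapses the covariance-shift comparison (governed a priori by a $\sup_{|z|=1}$ of a singular value, cf. Theorem \ref{thm: network_theoretic_result_noisy}) onto the same scalar DC-gain test as the mean-shift case and removes the $\sqrt{m_1}$ slack present in Lemma \ref{lma: non-negative matrix}.
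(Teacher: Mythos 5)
Your proof is correct and follows exactly the route the paper intends: the paper omits the proof of Proposition \ref{ref: prop siso}, stating only that it mirrors the proof of Lemma \ref{lma: non-negative matrix}, and your argument is that proof specialized to a singleton cutset --- non-negativity of $G$ collapses the $H_\infty$ norm onto the DC gain $T_l(1)$, the Neumann series converts the row-sum condition into an entrywise bound on $(I-G_{pp})^{-1}G_{pc}$, and Proposition \ref{prop: error probability vs eta and R} converts gain comparisons into error-probability comparisons. Two details you supply are genuinely needed and are not a verbatim transcription of that proof: the direct check that $R_s$ in \eqref{eq: identical R asymp} remains increasing in the gain when $\sigma_2^2>0$ (this subsection explicitly drops the $\Sigma_2=0$ assumption, so Theorem \ref{thm: network_theoretic_result_noisy} cannot be invoked literally), and the scalar factorization $T_{j_1}(1)=[T_s(1)]_{j_1}\,T_j(1)$, which is what replaces the $1/\sqrt{m_1}$ threshold of Lemma \ref{lma: non-negative matrix} by the threshold $1$ in this per-node comparison.
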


 The proof of above proposition is similar to the proof of Lemma \ref{lma: non-negative matrix} and hence, the details are omitted.  By not resorting to any proof techniques, i.e, the functional dependence arguments, that we used in previous section, we now validate assertions in above proposition by expressing the error probability in terms of the entries in the matrix $G$. To this aim, we consider a line network (see Fig. \ref{fig: toeplitz}), whose adjacency matrix is given by the following matrix: 
\begin{align}\label{eq: Toeplitz adjacency matrix}
G&=\begin{bmatrix}
a & b & 0 & \cdots & 0 & 0\\
c & a & b & \cdots & 0 & 0\\
\vdots & \vdots & \vdots & \ddots & \vdots & \vdots \\
0 & 0 & 0 & \cdots & a & b\\
0 & 0 & 0 & \cdots & c & a
\end{bmatrix}_{n \times n},
\end{align}
where, $a,b,c \in \mathbb{R}_{\geq 0}$. We let the cutset node $j$ be located on the right of the input node $q$, i.e., $1\leq q<j<n$ (see Fig \ref{fig: toeplitz}). The case when $j$ is to the left side of the input node $q$ follows similar analysis. Thus, we have the partitioned set ${\cal P}=\{j+1,\ldots,n\}$. We now show that, for any $l \in \cal P$, the error probabilities $\mbP_{e_m}(l)$ and $\mbP_{e_v}(l)$ are greater or smaller than those of the cutset node $j$. The following proposition helps us achieve the required goal: 

\begin{proposition}\label{prop: top monontone}
	Let $G$ be as in \eqref{eq: Toeplitz adjacency matrix} and $\overline{\lambda}(G)<1$. Let $|(I-G)^{-1}_{l,q}|$ be the absolute value of $(l,q)$-th entry of $(I-G)^{-1}$. Let $\widetilde G$ be as in Lemma \ref{lma: non-negative matrix}. Then, we have:
	\begin{itemize}
		\item [i)] If $\|\widetilde G\|_{\infty}<1$, then $|(I-G)^{-1}_{q,q}|\geq |(I-G)^{-1}_{q+1,q}| \cdots \geq |(I-G)^{-1}_{n,q}|$.
		\item [ii)] If all row sums of $\widetilde G$ are greater than one, then $|(I-G)^{-1}_{lq}|\geq|(I-G)^{-1}_{qq}|$ for all $q<l\leq n$. If $b=0$, we have $|(I-G)^{-1}_{q+1,q}|\geq |(I-G)^{-1}_{q+2,q}| \cdots \geq |(I-G)^{-1}_{n,q}|$.
	\end{itemize}
\end{proposition}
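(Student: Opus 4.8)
The plan is to treat the $(l,q)$ entries of $(I-G)^{-1}$ as the solution of a scalar three-term recurrence and to extract the two orderings by induction. First I would observe that, because $G\geq0$ and $\overline{\lambda}(G)<1$, the Neumann series $(I-G)^{-1}=\sum_{k\geq0}G^k$ converges entrywise to a non-negative matrix; hence $h_l:=(I-G)^{-1}_{l,q}\geq0$, so $|(I-G)^{-1}_{l,q}|=h_l$ and the absolute values may be discarded, with $h_q\geq1$. Since $a$ is the spectral radius of the $1\times1$ principal submatrix of $G$ indexed by $q$, non-negativity gives $a\leq\overline{\lambda}(G)<1$, so $1-a>0$. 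Writing the defining identity $(I-G)h={\bf e}_q$ rowwise for the matrix in \eqref{eq: Toeplitz adjacency matrix} yields the homogeneous recurrence $(1-a)h_l=c\,h_{l-1}+b\,h_{l+1}$ at every interior index $q<l<n$, together with the terminal relation $(1-a)h_n=c\,h_{n-1}$. Finally I would record that the interior rows of $\widetilde G=[G_{pp}\ G_{pc}]$ sum to $a+b+c$ while its last row sums to $a+c$, so $\|\widetilde G\|_{\infty}=a+b+c$ and the minimal row sum equals $a+c$.

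For (i) I would prove $h_l\geq h_{l+1}$ for $l=n-1,\dots,q$ by backward induction. The base case is the terminal relation: $\|\widetilde G\|_{\infty}<1$ forces $a+c<1$, hence $c/(1-a)<1$ and $h_n=\tfrac{c}{1-a}h_{n-1}\leq h_{n-1}$. For the step, the recurrence at index $l+1$ gives $c\,h_l=(1-a)h_{l+1}-b\,h_{l+2}\geq(1-a-b)h_{l+1}\geq c\,h_{l+1}$, the first bound using the hypothesis $h_{l+2}\leq h_{l+1}$ with $b\geq0$ and the second using $a+b+c\leq1$; since $c>0$ this gives $h_l\geq h_{l+1}$. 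Chaining the inequalities yields $h_q\geq h_{q+1}\geq\cdots\geq h_n$, and the degenerate case $c=0$ is immediate because then $G$ is upper triangular with $h_l=0$ for $l>q$.

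Part (ii) runs the same machinery with every inequality reversed, now driven by the hypothesis that all row sums exceed one, i.e.\ $a+c>1$ (and a fortiori $a+b+c>1$); note that $a<1$ then forces $c>0$. The terminal relation now gives $h_n=\tfrac{c}{1-a}h_{n-1}\geq h_{n-1}$ because $c/(1-a)>1$, which is the base case of the backward induction $h_{l+1}\geq h_l$. In the step, $c\,h_l=(1-a)h_{l+1}-b\,h_{l+2}\leq(1-a-b)h_{l+1}\leq c\,h_{l+1}$, where the first bound uses $h_{l+2}\geq h_{l+1}$ with $b\geq0$ and the second uses $a+b+c\geq1$; cancelling $c>0$ gives $h_l\leq h_{l+1}$. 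This produces $h_q\leq h_{q+1}\leq\cdots\leq h_n$, and in particular $h_l\geq h_q$ for all $q<l\leq n$. When $b=0$ the three-term recurrence collapses to the first-order relation $h_l=\tfrac{c}{1-a}h_{l-1}$ for $l>q$, so the tail is geometric with ratio $c/(1-a)$, and both its monotone ordering and its comparison with $h_q$ can be read off directly from the sign of $a+c-1$.

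The only delicate point is the general $b>0$ case: the three-term recurrence couples $h_{l-1},h_l,h_{l+1}$, so monotonicity cannot be read off termwise. The resolution is to anchor the induction at the terminal node, whose one-sided relation $(1-a)h_n=c\,h_{n-1}$ fixes the direction of the first comparison, and to exploit that the two distinct row sums $a+c$ and $a+b+c$ are exactly what the base case and the inductive step respectively require. Verifying that the inductive step never needs more than $b\geq0$ and the already-established ordering of the next pair — so that no two-sided a priori control of the $h_l$ is needed — is what makes the induction close, and is the crux of the argument.
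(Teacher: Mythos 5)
Your proof is correct and, structurally, runs the same backward induction as the paper's: both arguments control the ratio of consecutive entries $|(I-G)^{-1}_{l,q}|/|(I-G)^{-1}_{l-1,q}|$, anchor it at the boundary row $l=n$, and propagate down to $l=q$. The difference lies in how the underlying three-term recurrence is obtained. The paper imports the closed-form expression for the inverse of a tridiagonal Toeplitz matrix (the $\theta_k,\phi_k$ recursions of \cite{JWL:82}) and manipulates the resulting ratios $\zeta(l)$; you instead read the recurrence $(1-a)h_l=c\,h_{l-1}+b\,h_{l+1}$ and the terminal relation $(1-a)h_n=c\,h_{n-1}$ directly off $(I-G)h={\bf e}_q$, and supply the non-negativity $h_l\geq 0$ from the Neumann series, which is precisely what the inductive step $(1-a-b)h_{l+1}\gtrless c\,h_{l+1}$ requires. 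This buys three things: the argument is self-contained; the degenerate case $c=0$, where the paper's ratio $\zeta(l)$ is $0/0$, is handled explicitly; and part (ii), which the paper explicitly leaves to the reader, is actually carried out. (Both arguments read $\|\widetilde G\|_{\infty}$ as $a+b+c$, which tacitly presumes $\mathrm{card}(\mathcal{P})\geq 2$; that gloss is shared with the paper.)

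One substantive point: under the hypothesis of part (ii) your induction yields the \emph{increasing} chain $h_q\leq h_{q+1}\leq\cdots\leq h_n$, and for $b=0$ the explicit solution $h_l=(1-a)^{-1}\bigl(c/(1-a)\bigr)^{l-q}$ with ratio $c/(1-a)>1$ confirms that the tail grows with $l$. This contradicts the literal ``$\geq$'' chain written in the second sentence of part (ii), but it agrees with the paper's own interpretation of the result (``the node that is farthest from the input has better detection performance than any other node''), so the inequality direction printed there is evidently a typo and your conclusion is the correct one. You should, however, state that direction outright rather than hedging with ``can be read off from the sign of $a+c-1$'': under the standing hypothesis $a+c>1$ the sign is determined, and committing to it is what exposes the misprint.
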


\smallskip 
For a fixed input $q$, above proposition characterizes the qualitative behavior of the input-to-output transfer function gains associated with different output nodes. This fact can be easily seen by expressing $|(I-G)^{-1}_{lq}|$ as $|{\bf e}_l^\trans(I-G)^{-1}{\bf e}_q|$. For the case of Toeplitz line networks, the assertion in Proposition \ref{ref: prop siso} is now an easy consequence of Proposition \ref{prop: error probability vs eta and R} and \ref{prop: top monontone}. In particular, if $b=0$ and $a+c>1$, Proposition \ref{prop: top monontone} also implies that, the node that is farthest from the input has better detection performance than any other node, including the cutset node. Similarly, assertion in Theorem \ref{thm: network_theoretic_result_noiseless} can be verified by letting $\sigma^2_v=0$. 
 
The procedure illustrated above, evaluating the error probabilities via the entries of $(I-G)^{-1}$, becomes tedious and might not be even possible for arbitrary network structures. In such situations, one can use the proof techniques presented in Section \ref{sec: network_analysis} for understanding the detection performance of sensors on networks.  

\section{Simulation results}\label{sec: numerical studies} 
In this section, we present numerical simulations to validate the effectiveness of our cutset based characterization of MAP detection performance on networks, for the case of noisy measurements. 
\begin{figure}
	\centering
	\includegraphics[width=1.0\linewidth]{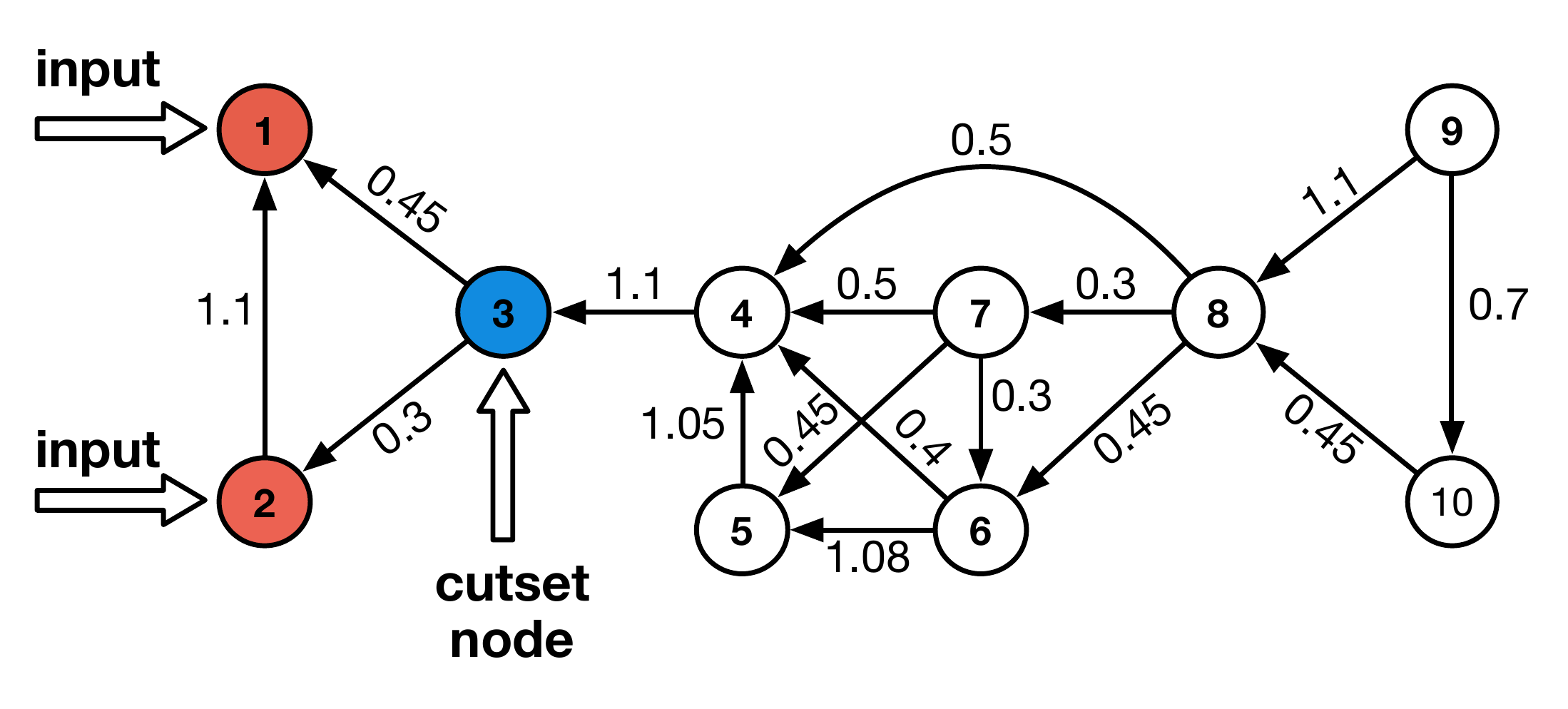}
	\caption{\footnotesize The graph of a network consisting of $10$ nodes. The nodes that are to the right of the cutset node $\{3\}$ form the partitioned set. Instead, nodes $1$ and $2$ form the source set.}
	\label{fig: synthetic}
\end{figure}

\textit{(Detection performance of sensors on the partitioned nodes is better than that of the sensors on the cutset nodes)}: For this scenario, consider the network in Fig \ref{fig: synthetic}. The network has 10 nodes, with $1$ and $2$ being the input nodes, $\mathcal{C}_d=\{3\}$ is the cutset node, and $\mathcal{P}=\{4,\ldots,10\}$ is the partitioned node set. The adjacency matrix of this network is nilpotent, and as a result, system \eqref{eq: system} evolving on this network will have a short memory (in fact $G^{10}=0$). By short (resp. long) memory, we mean that the current state of the network depends on few (resp. several) past states.  For the mean shift model, the input ${\bf w}_i[k]\sim \mathcal{N}(\mu_i{\bf 1},\sigma_i^2I_{2\times 2})$, where $\mu_1=2$, $\mu_2=1$, and $\sigma^2_2=\sigma^2_2=1.5$. Instead, for the  covariance shift model, the input\footnote{the choice of zero mean is arbitrary, since, the LD-MAP detector's error probability do not depend on the mean; see Lemma \ref{lma: special cases error probability}.} ${\bf w}_i[k]\sim \mathcal{N}({\bf 0},\sigma_i^2I_{2\times 2})$, where $\sigma_1^2=2.0$ and $\sigma_2^2=1.0$. In both the models, $N=200$ and the sensor noise variance $\sigma^2_v=1.2$. 

\begin{figure}
	\begin{subfigure}[b]{0.5\linewidth}
		\centering
		\includegraphics[width=1.0\textwidth]{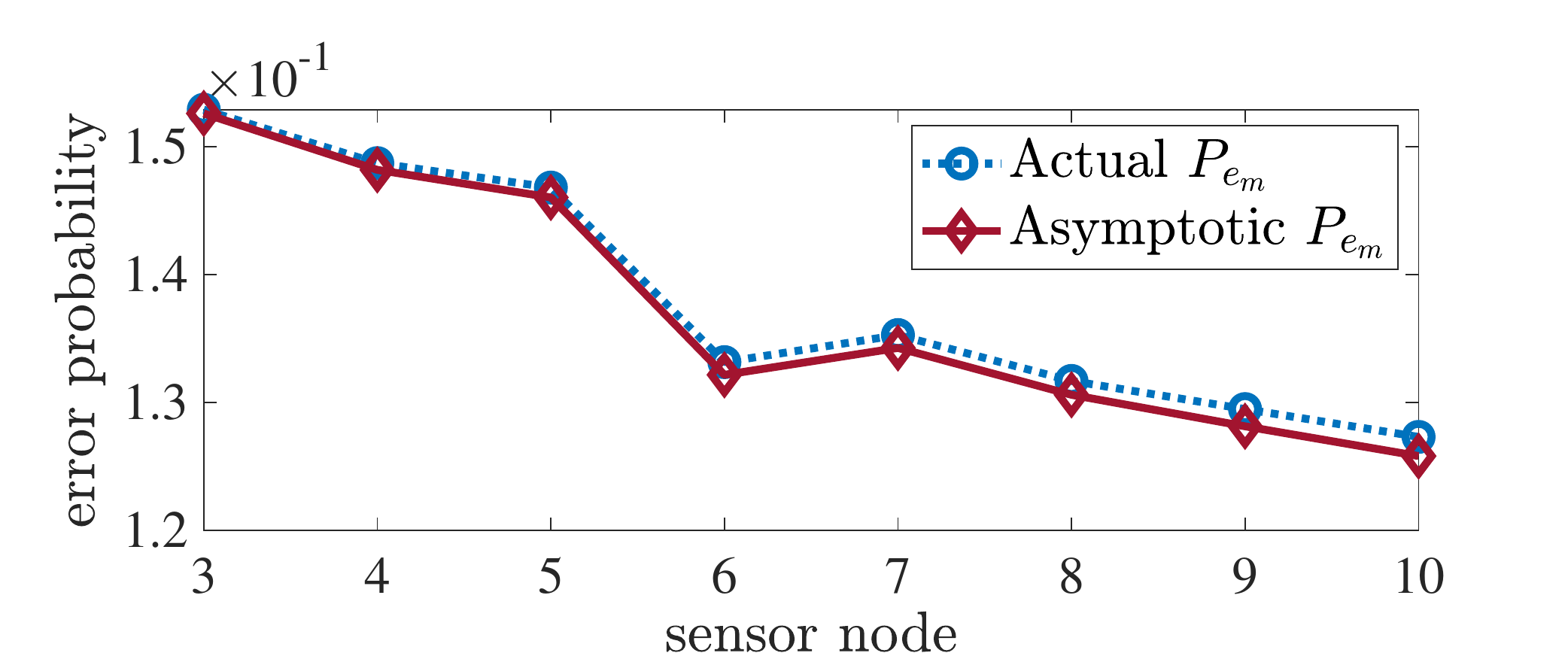} 
		\caption{mean model: short memory} 
		\label{fig: synthetic_a} 
	\end{subfigure}
	\begin{subfigure}[b]{0.5\linewidth}
		\centering
		\includegraphics[width=1.0\textwidth]{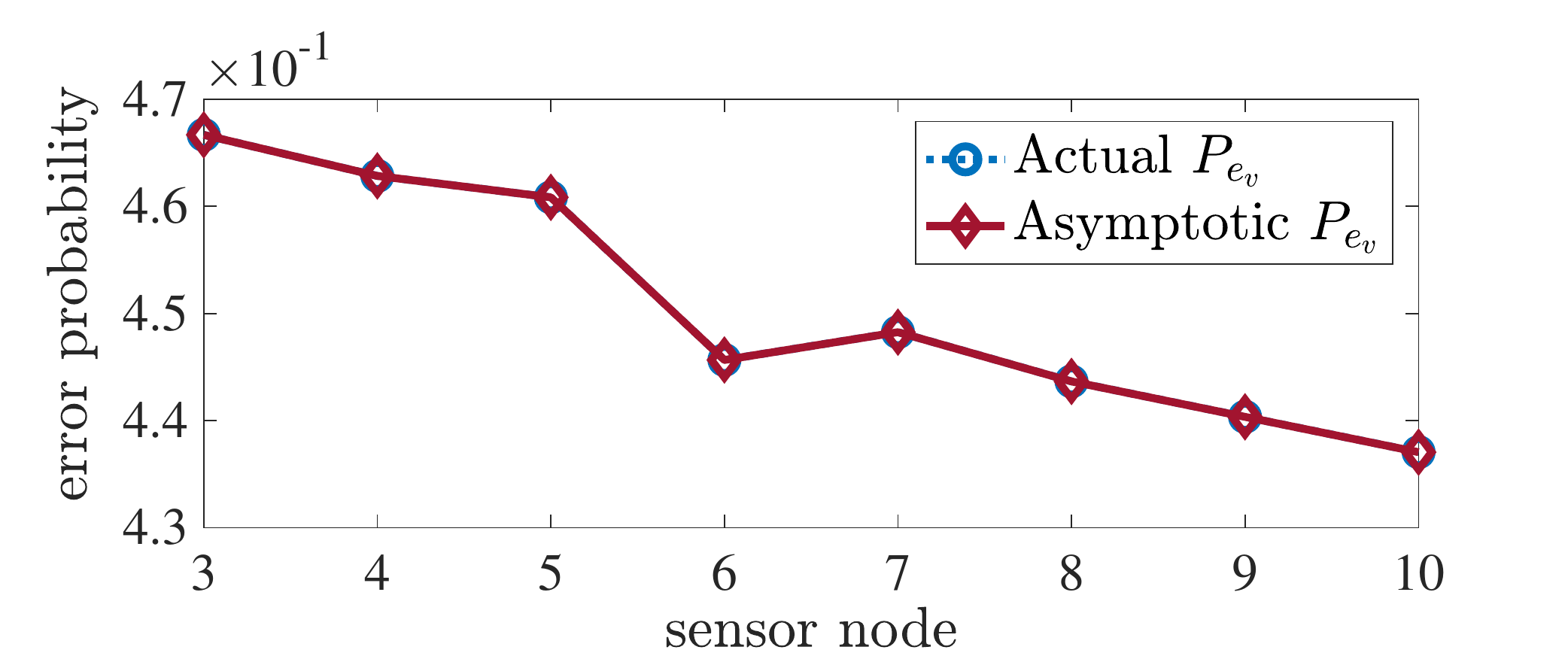} 
		\caption{covariance model: short memory} 
		\label{fig: synthetic_b} 
	\end{subfigure} 
	\begin{subfigure}[b]{0.5\linewidth}
		\centering
		\includegraphics[width=1.0\textwidth]{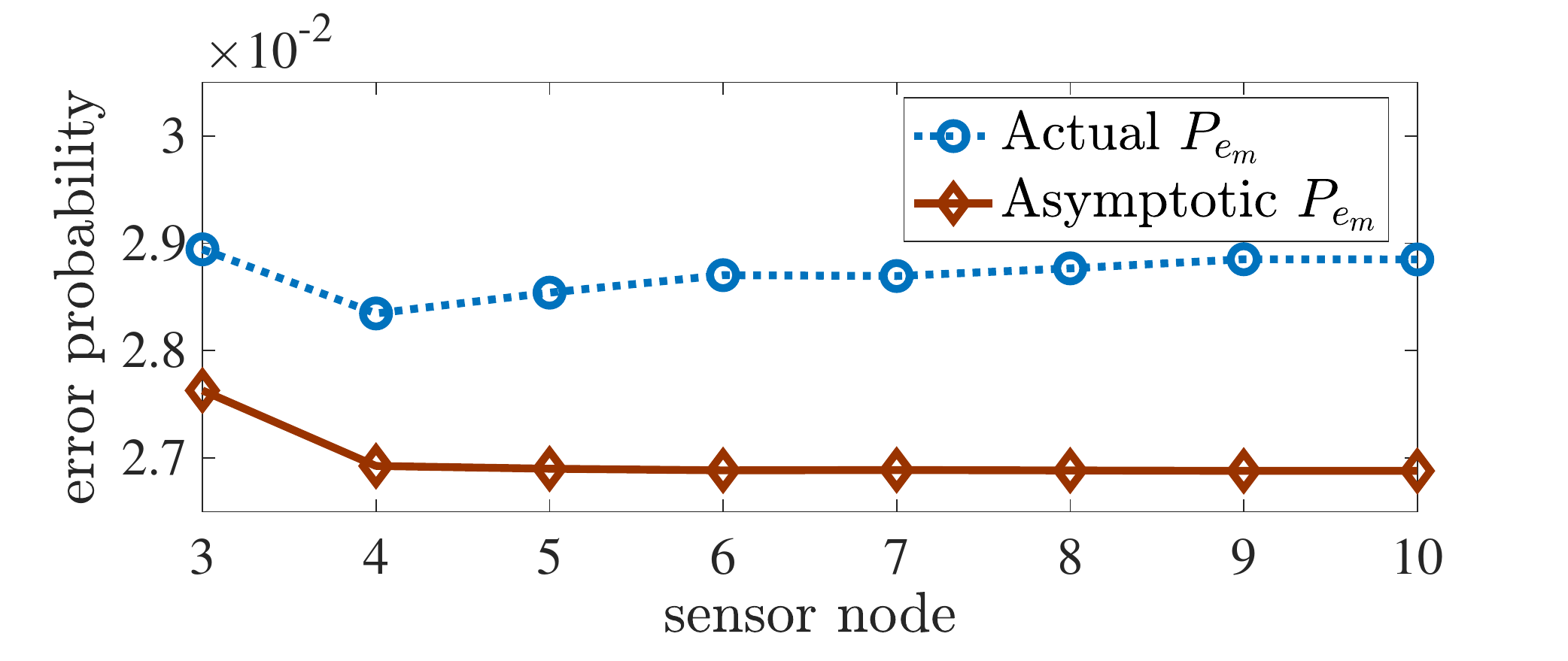} 
		\caption{mean model: long memory} 
		\label{fig: synthetic_c} 
	\end{subfigure}
	\begin{subfigure}[b]{0.5\linewidth}
		\centering
		\includegraphics[width=1.0\textwidth]{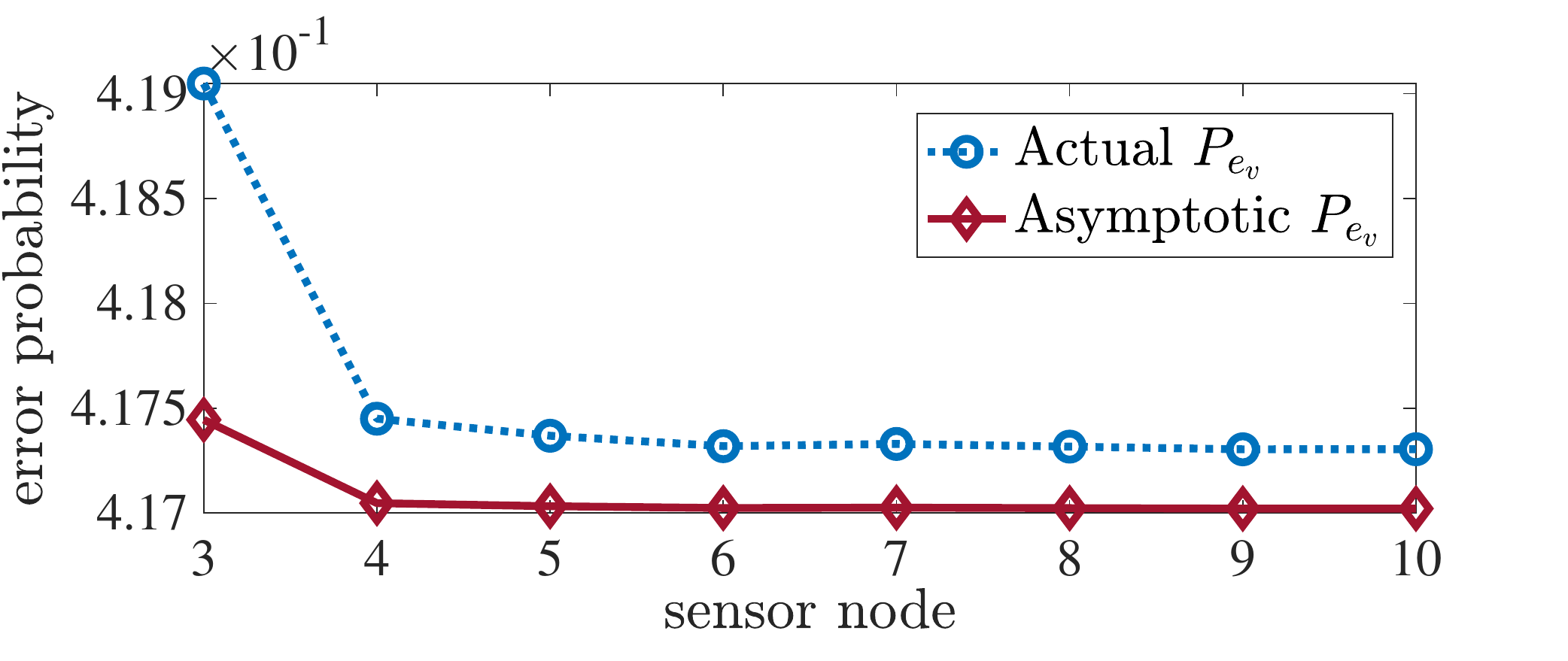} 
		\caption{covariance model: long memory} 
		\label{fig: synthetic_d} 
	\end{subfigure} 
	\caption{\footnotesize Actual and asymptotic error probabilities (Lemma \ref{lma: special cases error probability}) of the MAP and LD-MAP detectors associated with various nodes of the network shown in Fig. \ref{fig: synthetic}. The panels (a) and (b) corresponds to the adjacency matrix that results in the shorter memory of the network dynamics \eqref{eq: system}. Instead, panels (c) and (d) are associated with an adjacency matrix that results in the longer memory of the network dynamics. The error probability associated with each node in the partitioned set $\mathcal{P}=\{4,\ldots,10\}$ is less than that of the cutset node ${\cal C}_d=\{3\}$. This result is consistent with Lemma \ref{lma: non-negative matrix}, because all row sums of submatrix $\widetilde G$ are greater than one.}
	\label{fig7} 
\end{figure}

Fig. \ref{fig: synthetic_a} and Fig. \ref{fig: synthetic_b} illustrates the actual and asymptotic error probabilities of the mean and covariance shift models, respectively. The error probabilities are computed using the formulas in Lemma \ref{lma: special cases error probability}. In particular, for the asymptotic case, we use the SNRs in Corollary \ref{cor: special cases error probability}. In both figures, the error probability associated with the cutset node is greater than that of any node in the partitioned set. This must be the case since $G\geq0$, and the row sums of the submatrix $\widetilde G$ are greater than one (see Lemma \ref{lma: non-negative matrix}). 

The error between the asymptotic and actual error probabilities in Fig. \ref{fig: synthetic_a} and Fig. \ref{fig: synthetic_b}  is almost negligible, even when $N$ is not large. This is because the adjacency matrix $G$ is a nilpotent matrix, and as a result, the difference between the actual and asymptotic SNRs is minimum. However, this might not be the case when $G$ has long memory, i.e., $G^k\approx0$ only for a very large $k$. For $N=800$, Fig. \ref{fig: synthetic_c} and Fig. \ref{fig: synthetic_d} illustrate this scenario for the  
network that is obtained by modifying some edges of the network in Fig. \ref{fig: synthetic}, such that $G^k\approx0$ for very large $k$. 

\textit{(Detection performance of sensors on the cutset nodes is better than that of the sensors on the partitioned nodes)}: 

\begin{figure}
	\centering
	\includegraphics[width=0.9\linewidth]{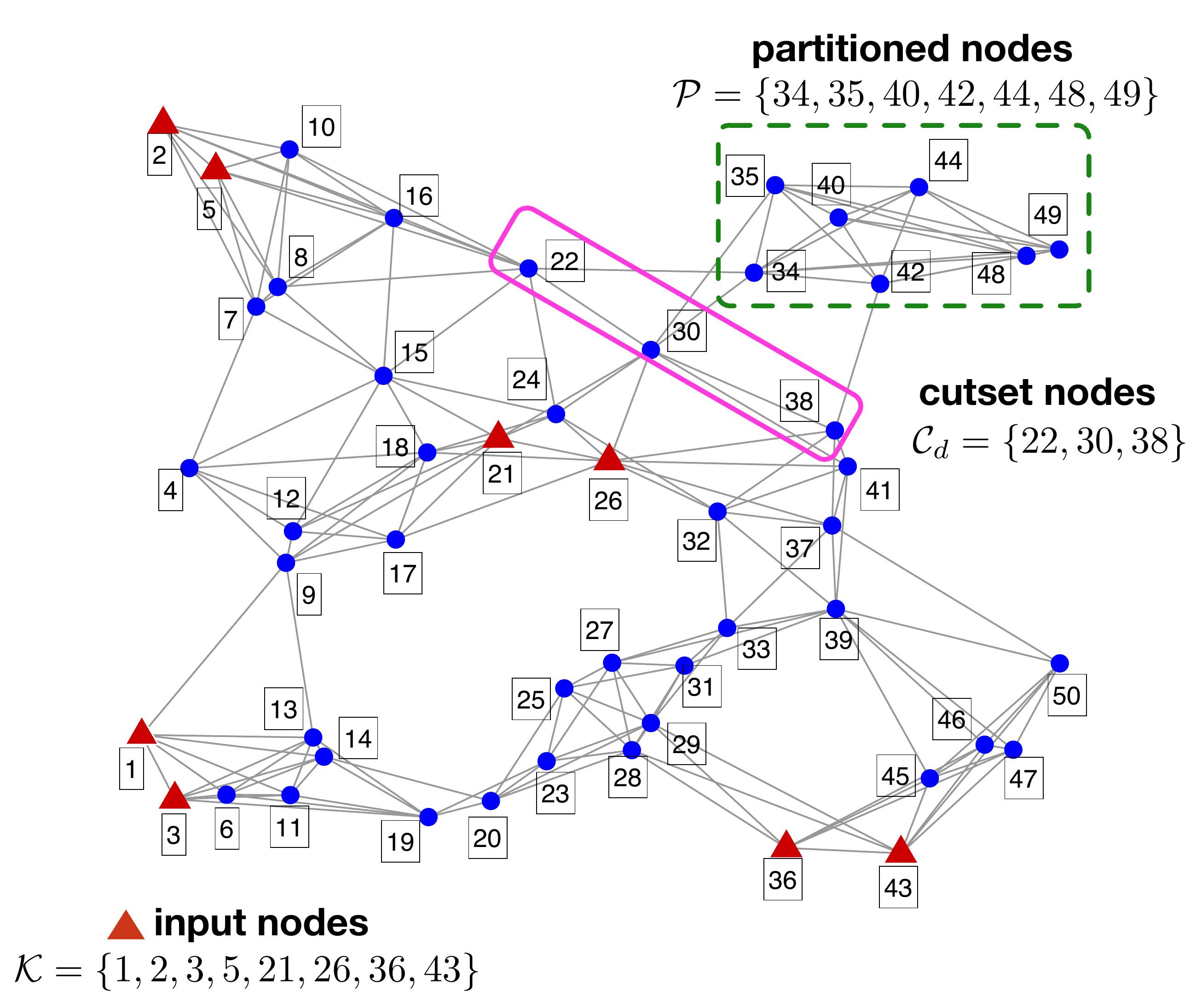}
	\caption{\footnotesize Graph associated with a randomly generated network consisting of $50$ nodes \cite{PN-JP-DS-LM-VK-PV-DH:14}. A total of $8$ nodes are subjected to stochastic inputs. Instead, sensors are placed on the cutset nodes and the partitioned nodes that are not collocated with the input nodes.}
	\label{fig: sensor_network}
\end{figure}

 Consider the network shown in Fig \ref{fig: sensor_network}. The network has 50 nodes among which $\mathcal{K}=\{1,2,3,5,21,26,36,43\}$ are the input nodes. The cutset $\mathcal{C}_d=\{22,30,38\}$ separates $\cal K$ from the partitioned set $\mathcal{P}=\{34,35,40,42,44,48,49\}$. For the mean shift model, the input ${\bf w}_i[k]
\sim\mathcal{N}(\mu_i{\bf 1},\sigma_i^2I_{8})$, where $\mu_1=2$, $\mu_2=1$, and $\sigma^2_2=\sigma^2_2=1.5$, and $\sigma^2_v=1.2$. Instead, for the covariance shift model, the input ${\bf w}_i[k]\sim \mathcal{N}({\bf 0},\sigma_i^2I_{8})$, where $\sigma_1^2=25.0$, $\sigma_2^2=0.1$, and $\sigma^2_v=0.5$. In both the models, $N=200$. 

Consider all possible subsets of $\mathcal{C}_d\sqcup \mathcal{P}$ whose cardinalities are same as that of the cutset $\mathcal{C}_d$. It is easy to see that there are $120$ such sets. For each of these sets, we associate a label $\mathcal{J}_\text{ind}$, where $\text{ind}\in \{1,\ldots,120\}$. The labels are given based on a decreasing order of the error probabilities associated with the subsets. In Fig. \ref{fig: sensor_mean} and Fig. \ref{fig: sensor_cov}, we show the actual and asymptotic error probabilities of the mean and covariance shift models, respectively. In both figures, the error probability associated with the ${\mathcal {C}}_d$ is lesser than that of any $\mathcal{J}_\text{ind}$. This must be the case because $G\geq0$, and the row sums of the submatrix $\|\widetilde G\|_{\infty}<1/\sqrt{7}=0.3780$ (see Lemma \ref{lma: non-negative matrix}). 

\begin{figure}[h!]
	\begin{subfigure}[b]{0.95\linewidth}
		\centering
		\includegraphics[width=1.0\textwidth]{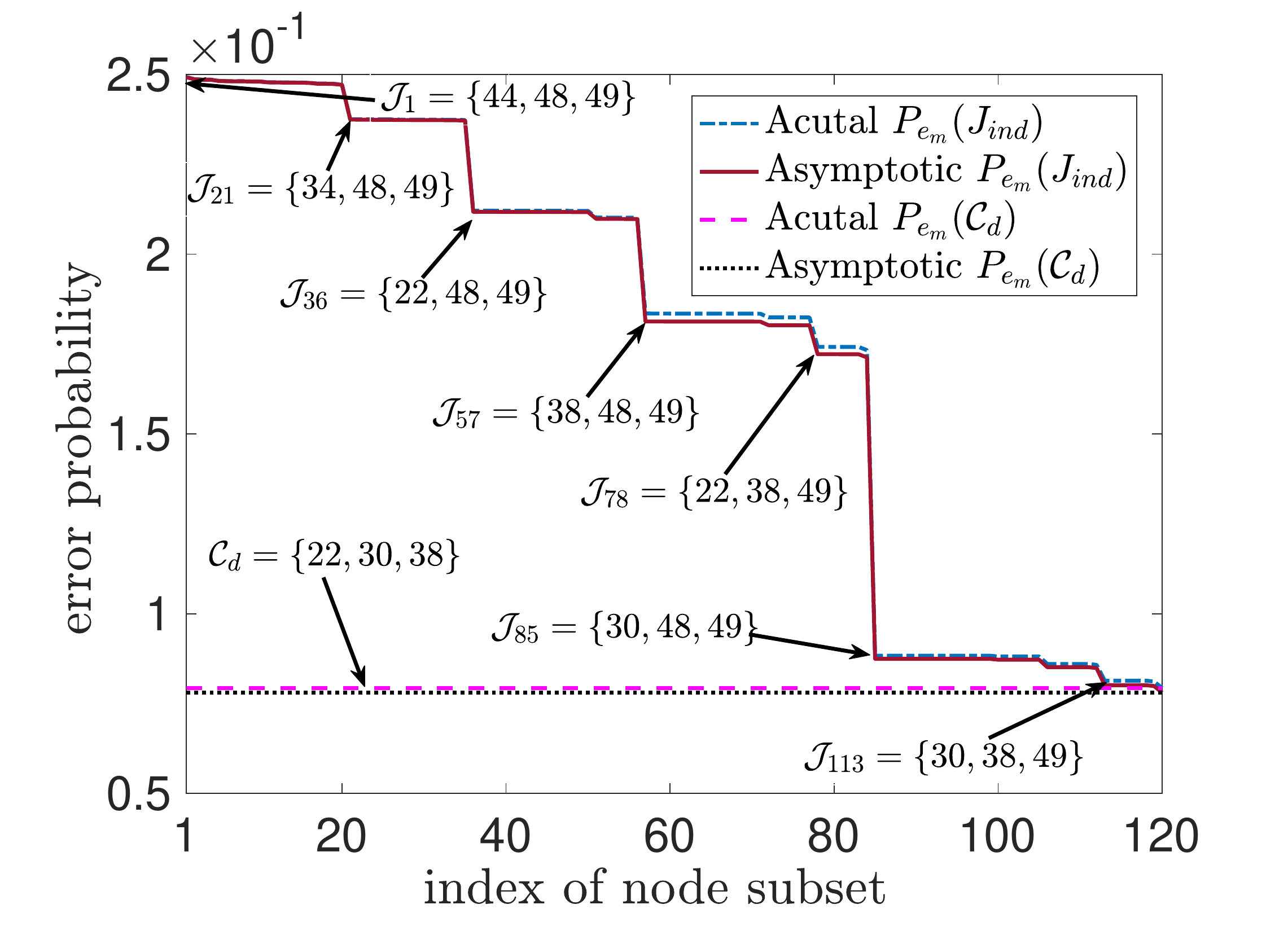}
		\caption{\footnotesize mean shift model.}
		\label{fig: sensor_mean}
	\end{subfigure}
	\begin{subfigure}[b]{0.95\linewidth}
		\centering
		\includegraphics[width=1.0\textwidth]{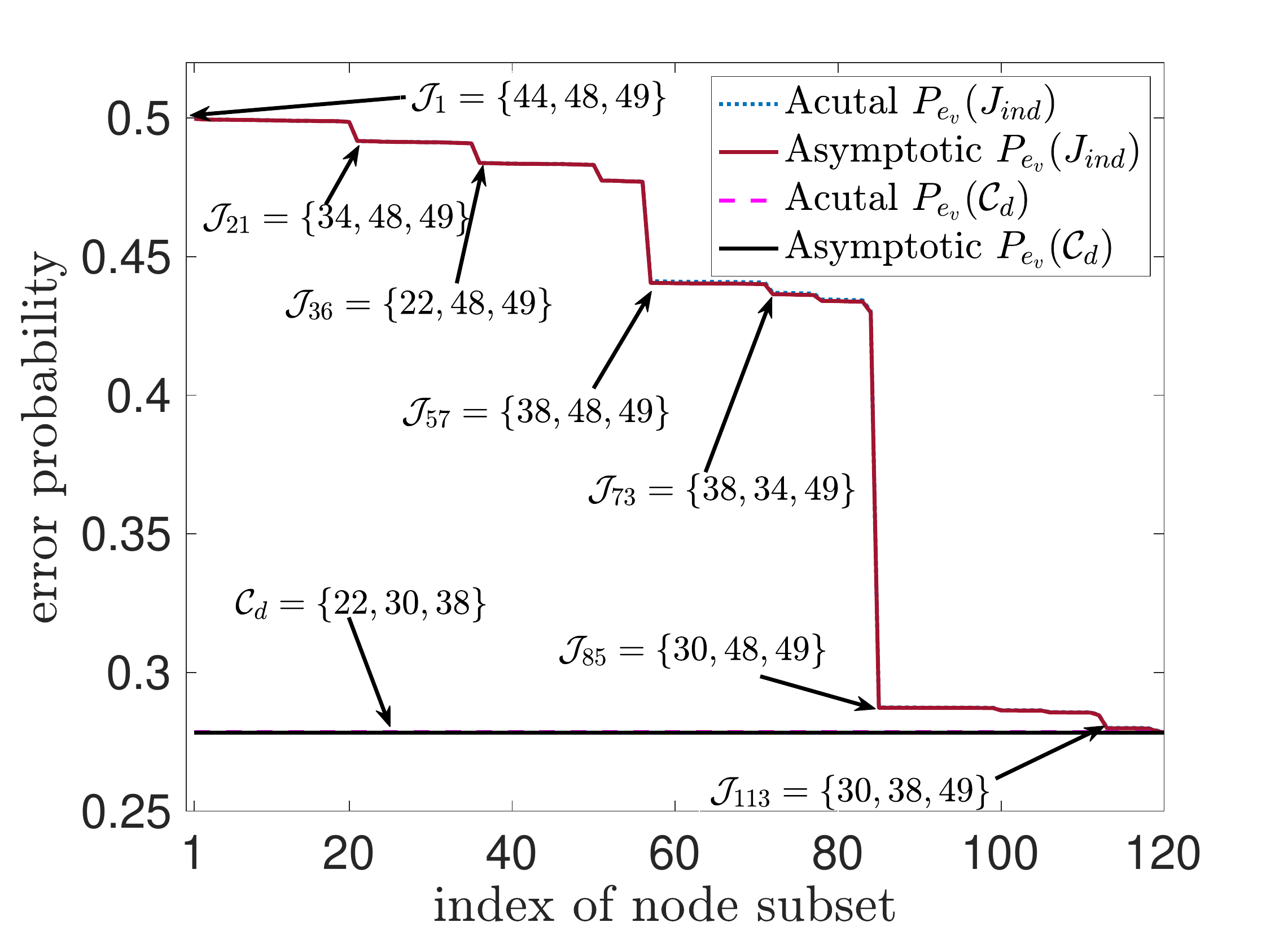}
		\caption{\footnotesize covariance shift model.}
		\label{fig: sensor_cov}
	\end{subfigure}
\caption{\footnotesize Actual and asymptotic error probabilities (Lemma \ref{lma: special cases error probability}) of the MAP and LD-MAP detectors associated with the node cutset $\mathcal{C}_d$ and all possible $3$ node subsets of $\mathcal{C}_d\sqcup \cal P$ of the network shown in Fig. \ref{fig: sensor_network}.The error probabilities of the detectors associated with cutset nodes is lower than that of the detectors associated with any subset of the nodes in the partitioned set. This result is consistent with Lemma \ref{lma: non-negative matrix}, because the submatrix $\widetilde G$ row sums of the adjacency matrix $G$ are less than $1/\sqrt{m_1}$ ($m_1=7$).}
\end{figure}

\section{Conclusion}\label{sec: conclusions} {In this
  paper we formulate mean and covariance detection placement problems
  for linear dynamical systems defined over networks with unknown
  stochastic inputs. The main technical contribution of the paper is
  to identify graphical conditions that predict the performance of MAP
  and LD-MAP detectors based on the distance between the employed
  sensors and the stochastic inputs. For networks with non-negative
  edge weights, we also show that the performance of a detector can be
  independent of the graphical distance between the sensors and the
  stochastic inputs.}


\setcounter{section}{1} \renewcommand{\thetheorem}{A.\arabic{theorem}}
\section*{APPENDIX}

\noindent {\it Proof of proposition \ref{prop: moments of measurements}}: 
From the network dynamics \eqref{eq: system} and sensor measurements \eqref{eq: measurement model}, ${\bf Y}_{\cal J}$ \eqref{eq: measurements recorded} can be expanded as 
\begin{align}\label{eq: expanded_measurements}
{\bf Y}_{\cal J}=\mathcal{O}{\bf x}[0]+\mathcal{F}{\bf w}_{0:N-1}+{\bf v}_{1:N}, 
\end{align}
where the vectors ${\bf w}_{0:N-1}=[{\bf w}[0]^\transpose, \ldots, {\bf w}[N-1]^\transpose]^\transpose$ and ${\bf v}_{1:N}=[{\bf v}[1]^\transpose, \ldots, {\bf v}[N]^\transpose]^\transpose$, respectively. The matrices $\mathcal{O}$ and $\mathcal{F}$ are defined in the statement of the proposition. The expressions of $\ol{\bs \mu}_i$ and ${\ol \Sigma}_i$ in \eqref{eq: moments} follows by taking the expectation and covariance of ${\bf Y}_{\cal J}$, respectively. \QEDB

\smallskip 
\noindent {\it Proof of Lemma \ref{lemma: MAP detector}}: Let $\zeta$ and $z$ are the realizations of ${\bf Y}_{\cal J}$ and $y$, respectively. Since the input and measurement noises follows a Gaussian distribution, the probability density functions of ${\bf Y}_{\cal J}$ \eqref{eq: measurements recorded} and $y={\bf b}^\transpose {\bf Y}_{\cal J}$ are  
\begin{align}\label{eq: pdfs}
f(\zeta|H_i)&\propto \frac{1}{\sqrt{|\overline \Sigma_i|}}\exp \left[ -\frac{1}{2}(\zeta-\ol{\bs \mu}_i)^{T}\overline{\Sigma}_i^{-1}(\zeta-\ol{\bs \mu}_{i})\right] \text{ and }\nonumber \\
g(z|H_i)&\propto \frac{1}{\sqrt{{\bf b}^\transpose \ol \Sigma_i {\bf b}}}\exp \left[ -\frac{(z-{\bf b}^\transpose \ol{\bs \mu}_i)^2}{2\,{\bf b}^\transpose \ol \Sigma_i {\bf b}}\right], 
\end{align}
respectively, where $|\cdot|$ denotes the determinant. Define the log likelihood ratios $\Psi(\zeta)=\ln(f(\zeta|H_2)/f(\zeta|H_1))$ and $\widehat\Psi(z)=\ln(f(z|H_2)/f(z|H_1))$. Then, from the mixed Bayes formula \cite{JM-DC:78}, the MAP decision rules based on $\zeta$ and $z$, respectively, are given by 
\begin{align}\label{eq: MAP pdfs}
\Psi(\zeta)\overset{\widehat{H}=H_{2}}{\underset{\widehat{H}=H_{1}}{\gtrless}}\gamma \text{ and } \widehat \Psi(z)\overset{\widehat{H}=H_{2}}{\underset{\widehat{H}=H_{1}}{\gtrless}}\gamma.  
\end{align}

{\bf part 1)} Since $ {\Sigma}_1={\Sigma}_2$ and $\bs{\mu}_1\ne \bs{\mu}_2$, from \eqref{eq: moments}, it follows that $\ol {\Sigma}_1=\ol {\Sigma}_2$ and $\ol{\bs \mu}_1\ne \ol{\bs \mu}_2$. Invoking this observation in $f(\zeta|H_i)$, yields the following expression for $\psi(\zeta)$: 
\begin{align}\label{eq: llr mean detector}
\Psi(\zeta)=-0.5\,\ol{\bs \mu}_\Delta^\transpose{\ol \Sigma}_2^{-1}\ol{\bs \mu}_\Delta+\left(y-\ol{\bs \mu}_1\right)^\transpose{\ol \Sigma}_2^{-1}\ol{\bs \mu}_\Delta. 
\end{align}
Substitute \eqref{eq: llr mean detector} in the first decision rule of \eqref{eq: MAP pdfs} and simplify the resulting expression to obtain the MAP decision rule \eqref{eq: MAP mean detector} for $\zeta$. Finally, replacing $\zeta$ with ${\bf Y}_{\cal J}$ yields the required expression. 

{\bf part 2)} In this case we have $\ol{\bs \mu}_1= \ol{\bs \mu}_2$ and $\ol {\Sigma}_1\ne \ol {\Sigma}_2$.  A similar procedure, as in part 1), based on $g(z|H_i)$ \eqref{eq: MAP pdfs} and the second decision rule in \eqref{eq: pdfs}, yields the LD-MAP detector's expression \eqref{eq: MAP mean detector}.  Details are left to the reader.  \QEDB

\smallskip 
\noindent {\it Proof of Lemma \ref{lma: special cases error probability}}: We divide the proof into two parts. In part 1) we derive the expressions \eqref{eq: P_em} and \eqref{eq: eta asymp} Instead, in part 2) we derive the expressions \eqref{eq: P_ev} and \eqref{eq: Rb asymp}. 

\smallskip 
{\bf part 1)} Under the assumption that $N<\infty$, let $\widehat \mbP_{e_m}(\cal J)$ be the error probability of \eqref{eq: MAP mean detector}.. Then, from \eqref{eq: MAP mean detector}, we have
\begin{align*}
\text{Pr}\left(\widehat H=H_2|H_1\right)&=\text{Pr}\left(\vphantom{\left. > \ol{\bs \mu}_\Delta^\transpose{\ol \Sigma}_c^{-1}\left( \ol{\bs \mu}_1+\ol{\bs \mu}_2\right)|H_1\right)}s> \ol{\bs \mu}_\Delta^\transpose{\ol \Sigma}_c^{-1}\left( \ol{\bs \mu}_1+\ol{\bs \mu}_2\right)|H_1\right) \text{ and}\\
\text{Pr}\left(\widehat H=H_1|H_2\right)&=\text{Pr}\left(\vphantom{\left. < \ol{\bs \mu}_\Delta^\transpose{\ol \Sigma}_c^{-1}\left( \ol{\bs \mu}_1+\ol{\bs \mu}_2\right)|H_1\right)}s< \ol{\bs \mu}_\Delta^\transpose{\ol \Sigma}_c^{-1}\left( \ol{\bs \mu}_1+\ol{\bs \mu}_2\right)|H_2\right),
\end{align*}
where $s=2\, \ol{\bs \mu}_\Delta^\transpose {\overline \Sigma}_c^{-1}  {\bf Y}_{\cal J}$ follows $\mathcal{N}(\ol{\bs \mu}_\Delta^\transpose{\ol \Sigma}_c^{-1}\ol{\bs \mu}_1, 4\ol{\bs \mu}_\Delta^\transpose{\ol \Sigma}_c^{-1}\ol{\bs \mu}_\Delta)$ under $H_i$, because $s$ is a linear transform of ${\bf Y}_{\cal J}|H_i$, which follows a Gaussian distribution. Define $\widehat \eta^2=\ol{\bs \mu}_\Delta^\transpose{\ol \Sigma}_c^{-1}\ol{\bs \mu}_\Delta$, and notice that $\text{Pr}(\widehat H=H_2|H_1)= Q_{\mathcal{N}}\left(0.5\,\widehat \eta\right)$ and $\text{Pr}(\widehat H=H_1|H_2)=1- Q_{\mathcal{N}}\left(0.5\,\widehat \eta\right)$. Finally, from \eqref{eq: general error probability}, we have $\widehat \mbP_{e_m}({\cal J})=0.5\,Q_{\cal N}(\widehat \eta)$. Define $\mbP_{e_m}({\cal J})= \lim_{N \to \infty}\widehat \mbP_{e_m}({\cal J})$, and note the following: 
\begin{align*}
\mbP_{e_m}({\cal J})=\lim\limits_{N \to \infty} 0.5\,Q_{\mathcal{N}}\left(0.5\,\widehat \eta\right)=0.5\,Q_{\mathcal{N}}\left(0.5\lim\limits_{N \to \infty} \widehat\eta\right)
\end{align*}
where the final equality follows because $\widehat \eta$ is increasing in $N$ (see Proposition \ref{prop: eta and R versus N}). We now show that $\lim_{N \to \infty} \widehat\eta=\eta$. From \eqref{eq: moments}, it follows that
\begin{align}\label{eq: eta_proof}
\widehat \eta^2&=\left( \mathcal{F}{\bf m}\right)^\transpose {\ol \Sigma_c^{-1}} \left( \mathcal{F}{\bf m}\right), 
\end{align}
where  ${\bf m}={\bs 1}_N\otimes {\bs \mu}_{\Delta}$ and ${\bs \mu}_{\Delta}={\bs \mu}_2-{\bs \mu}_1$. Let $l=1,2,\ldots.$, and define $K(l)=CG^l\Pi$ and $S(i)=\sum_{l=0}^{i-1}K(l)$. With these definitions and the assumption $\overline{\lambda}(G)<1$, we have $\lim_{i \to \infty}S(i)=C(I-G)^{-1}\Pi\triangleq \overline K$, and 
\begin{align}\label{eq: mFJm_decomp_proof}
\mFJ{\bf m}&=\underbrace{\begin{bmatrix}
	S(1)-\overline K\\
	S(2)-\overline K\\
	\vdots \\
	S(N)-\overline K
	\end{bmatrix}}_{S_N} {\bs \mu}_\Delta+\left[ {\bs 1}_N\otimes \overline K\right] {\bs \mu}_\Delta. 
\end{align}
Let $t(S_N)={\bs \mu}_\Delta^\transpose \left[S_N^\transpose  \ol \Sigma_c^{-1} S_N+2S_N^\transpose\Sigma_c^{-1}  \left[{\bs 1}_N\otimes  \overline K\right]\right]  {\bs \mu}_\Delta$. By 
substituting \eqref{eq: mFJm_decomp_proof} in \eqref{eq: eta_proof}, we have
\begin{align}\label{eq: eta_square_oN}
\widehat \eta^2&={\bs \mu}_\Delta^\transpose \underbrace{\left[{\bs 1}_N\otimes  \overline K\right]^\transpose \ol \Sigma_c^{-1} \left[{\bs 1}_N\otimes  \overline K\right]}_F{\bs \mu}_\Delta+t(S_N).
\end{align}
Consider the first term of \eqref{eq: eta_square_oN}. Since ${\bf x}[0]=0$, from \eqref{eq: moments}, it follows that $\ol \Sigma_c\!=\!\left[\mathcal{F}\left(I_N\otimes \Sigma_c\right)\mathcal{F}^\transpose\!+\!\sigma^2_vI\right]$. Further,
\begin{align}\label{eq: M_form_two}
\left[{\bs 1}_N\otimes \overline K\right]^\transpose\ol \Sigma_c&=\left[ \overline K^\transpose\overline K\Sigma_c+\sigma^2_vI\right]\left[{\bs 1}_N\otimes \overline K\right]^\transpose+\nonumber \\ 
&\quad \underbrace{\overline K^\transpose\left[ \widetilde S_N^\transpose\left(I \otimes \Sigma_c\right)\mathcal{F}^\transpose+\overline K\Sigma_cS_N^\transpose\right]}_{\widetilde M}, 
\end{align}
where $\widetilde S_N$ is obtained by permuting, bottom to top, the block matrices of $S_N$ \eqref{eq: mFJm_decomp_proof}. Right multiplying either sides of \eqref{eq: M_form_two} with ${\ol \Sigma_c}^{-1}\left[{\bs 1}_N\otimes \overline K\right]$ gives us: 
\begin{align}\label{eq: M_right_multiplied}
N\overline K^\transpose\overline K\!=\!\left[\overline K^\transpose\overline K\Sigma_c+\sigma^2_vI\right]F +P,
\end{align}
where $P=\widetilde{M}{\ol \Sigma_c}^{-1}[{\bs 1}_N\otimes \overline K]$ and $F$ is defined in \eqref{eq: eta_square_oN}. Since $\overline K^\transpose\overline K\Sigma_c\succeq 0$, from \eqref{eq: M_right_multiplied}, it follows that $F=[\overline K^\transpose\overline K\Sigma_c+\sigma^2_vI]^{-1}[ N\overline K^\transpose\overline K-P]$. Substituting $F$ in \eqref{eq: eta_square_oN} yields 
\begin{align*}
\widehat \eta^2&=N{\bs \mu}_{\Delta}^\transpose([\overline K^\transpose\overline K\Sigma_c+\sigma^2_vI]^{-1}\overline K^\transpose \overline K){\bs \mu}_{\Delta}+\epsilon(N),  
\end{align*}
where $\epsilon(N)=-{\bs \mu}_{\Delta}^\transpose  [\overline K^\transpose \overline K\Sigma_c+\sigma^2_vI]^{-1}P {\bs \mu}_{\Delta}+t(S_N)$. Finally, substituting $K=L\Sigma_c^{-\frac{1}{2}}$ in the above expression,  and manipulating the terms will give us 
\begin{align}\label{eq: eta_J remainder}
\widehat \eta^2&=N\widetilde {\bs \mu}_{\Delta}^\transpose\left(  [ L^\transpose L+\sigma^2_vI] ^{-1}L^\transpose L\right)\widetilde{ \bs \mu}_{\Delta} +\epsilon(N). 
\end{align}

We claim that $\lim_{N\to\infty}\epsilon(N)=0$. To see this, rewrite $\epsilon(N)$ as ${\bs \mu}_{\Delta}^\transpose (Q_1(N)+Q_2(N)+Q_3(N)){\bs \mu}_{\Delta}$, where 
\begin{align*}
Q_1(N)&=S_N^\transpose\left[\ol \Sigma_c^{-1}S_N+2\Sigma_c^{-1}  \left[{\bs 1}_N\otimes  \overline K\right]\right]\\
Q_2(N)&=[\overline K^\transpose \overline K\Sigma_c+\sigma^2_vI]^{-1}\overline K^\transpose \widetilde S_N^\transpose\left(I \otimes \Sigma_c\right)\mathcal{F}^\transpose\\
Q_3(N)&=[\overline K^\transpose \overline K\Sigma_c+\sigma^2_vI]^{-1}\overline K^\transpose \overline K\Sigma_cS_N^\transpose.
\end{align*}
From part 1) of Assumption \ref{assump: error probability assumption}, there exist a $k \in \mathbb{N}$ such that for all $m \in \{k,k+1,\ldots,N\}$, $S(m)-\overline K=0$. Thus, all but finite rows of $S_N$ \eqref{eq: mFJm_decomp_proof} are zeros, i.e., we can express $S_N^\transpose$ as $[F_1^\transpose \,\, 0^\transpose]$ and $\widetilde S_N^\transpose$ as $[0^\transpose \,\, F_2^\transpose]$, where the dimension of $F_1$ and $F_2$ depends only $k$. Thus, for all $N>k$, $Q_i(N)$ is a constant matrix, say $Q_i$, and we may conclude that 
\begin{align*}
\|{\bs \mu}_{\Delta}\|^2_2\sum_{i=1}^3\lambda_\text{min}(Q_i+Q_i^\transpose)& \leq 2\sum_{i=1}^3{\bs \mu}_{\Delta}^\transpose Q_i{\bs \mu}_{\Delta} \\
&\leq  \|{\bs \mu}_{\Delta}\|^2_2\sum_{i=1}^3\lambda_\text{max}(Q_i+Q_i^\transpose), 
\end{align*}
where $\lambda_\text{max}(\cdot)$ and $\lambda_\text{min}(\cdot)$ are the maximum and minimum eigenvalues. Since $\lim_{N\to \infty}N \|{\bs \mu}_{\Delta}\|_2=c$ (Assumption \ref{assump: error probability assumption}), it follows that $\lim_{N\to \infty} \|{\bs \mu}_{\Delta}\|_2=0$. Hence, 
$\lim_{N\to \infty}\epsilon(N)=0$ and $\lim_{N\to \infty} \widehat \eta=\eta$ \eqref{eq: eta asymp}. 

\smallskip 
{\bf part 2)} Under the assumption that $N<\infty$, let $\widehat \mbP_{e_v}(\cal J)$ be the error probability of \eqref{eq: MAP covariance detector}. Then, from \eqref{eq: MAP mean detector}, we have
\begin{align*}
\text{Pr}\left(\widehat H=H_2|H_1\right)\!&=\!\text{Pr}\left(\ln(\widehat R)\!>\! \left[\frac{Z^2}{{\bf b}^\transpose \ol \Sigma_2 {\bf b}}-\frac{Z^2}{{\bf b}^\transpose \ol \Sigma_1 {\bf b}}\right]|H_1\right), \\
\text{Pr}\left(\widehat H=H_1|H_2\right)\!&=\!\text{Pr}\left(\ln(\widehat R)\!<\! \left[\frac{Z^2}{{\bf b}^\transpose \ol \Sigma_2 {\bf b}}-\frac{Z^2}{{\bf b}^\transpose \ol \Sigma_1 {\bf b}}\right]|H_2\right), 
\end{align*}
where $Z={\bf b}^\transpose[{\bf Y}_J-\ol{\bs \mu}_c]$ and $\widehat R=({\bf b}^\transpose \ol \Sigma_1 {\bf b}/({\bf b}^\transpose \ol \Sigma_2 {\bf b})>1$ (since $\Sigma_2=0$; Assumption \ref{assump: error probability assumption}). Let $U \sim \mathcal{N}(0,1)$. Then, $Z|H_i\overset{d}{=}(\sqrt{{\bf b}^\transpose \ol \Sigma_i {\bf b}}) U$, where $\overset{d}{=}$ means equality in the distribution. From this fact, we now have $\text{Pr}(\widehat H=H_2|H_1)=\text{Pr}\left(\widehat \tau > U^2\right)$ and  $\text{Pr}(\widehat H=H_1|H_2)=\text{Pr}(U^2> \widehat \tau \widehat R)$, where $\widehat \tau =\ln(\widehat R)/(\widehat R-1)$. Since $U^2\sim \chi^2(1)$, we finally have 
\begin{align*}
\widehat \mbP_{e_v}(\cal J)&\!=\!0.5\left[ 1-Q_{\chi^2}\left(1, \widehat \tau \right)\right] +0.5\, Q_{\chi^2}(1, \widehat \tau \widehat R). 
\end{align*}
To simplify $\widehat R$, note the following: since ${\bf b}$ is the maximizer of $I$-divergence \eqref{eq: I-divergence}, from \cite{LLS:91}, we can also express $\widehat R$ as 
\begin{align*}
\widehat R=\frac{{\bf b}^\transpose \ol \Sigma_1 {\bf b}}{{\bf b}^\transpose \ol \Sigma_2 {\bf b}}=\max\limits_{{\bf d}\, \in\, \mathbb{R}^{mN}}\, \frac{{\bf d}^\transpose \ol \Sigma_1 {\bf d}}{{\bf d}^\transpose \ol \Sigma_2 {\bf d}}. 
\end{align*}
Let ${\bf c}=\ol\Sigma^{1/2}_2{\bf d}$, and note the following: 
\begin{align*}
\widehat R&=\max\limits_{{\bf c}\, \in\, \mathbb{R}^{mN}}\left(\frac{\bf c}{\|{\bf c}\|_2}\right)^\transpose \ol \Sigma^{-1/2}_2\ol \Sigma_1 \ol \Sigma^{-1/2}_2\left(\frac{\bf c}{\|{\bf c}\|_2}\right)\\
&=\lambda_\text{max}\left(\Sigma^{-1/2}_2\ol \Sigma_1 \ol \Sigma^{-1/2}_2\right)=\lambda_\text{max}\left(\ol \Sigma_1 \ol \Sigma^{-1}_2\right).
\end{align*} 
Since $\widehat R$ is an increasing sequence, with respect to $N$ (see Proposition \ref{prop: eta and R versus N}), the limits $R=\lim_{N\to \infty}\widehat R$, $\tau=\lim_{N \to \infty}\widehat \tau$ and $\lim_{N \to \infty}\widehat \tau\widehat R=\tau R$ are well defined. Now, consider 
\begin{align*}
\mbP_{e_v}(\cal J)&=\lim_{N\to\infty}\widehat \mbP_{e_v}(\cal J)\\
&=\lim_{N\to\infty}0.5\left[ 1-Q_{\chi^2}\left(1, \widehat \tau \right)\right] +0.5\, Q_{\chi^2}(1, \widehat \tau \widehat R)\\
&=0.5\left[1-Q_{\chi^2}\left(1, \tau \right)\right]\!+\!0.5\, Q_{\chi^2}(1, \tau R), 
\end{align*}
where the last equality follows because $\widehat \tau$ and $\widehat \tau\widehat R$ are decreasing and increasing in $N$ (Proposition \ref{prop: eta and R versus N}), resp. 

We now show that $R$ is given by \eqref{eq: Rb asymp}. 
Since $\Sigma_2=0$ and ${\bf x}[0]=0$, we have $\ol\Sigma_2=\sigma^2_vI$ and $\ol \Sigma_1=FF^\transpose+\sigma^2_vI$, where $F=\mathcal{F}(I_N\otimes \Sigma_1^{\frac{1}{2}})$ and $\Sigma_1^{\frac{1}{2}}$ satisfies $\Sigma_1=\Sigma_1^{\frac{1}{2}}\Sigma_1^{\frac{1}{2}}$. From these observations, we may conclude that 
\begin{align}\label{eq: limit wide hat R}
R=\lim_{N\to \infty}\widehat R&=\lim_{N\to \infty}\frac{\lambda_\text{max}(FF^\transpose+\sigma^2_vI)}{\sigma^2_v}\nonumber\\
&=1+\sigma^{-2}_v\lim_{N\to \infty}\lambda_\text{max}(FF^\transpose). 
\end{align}
It now suffices to evaluate $\lim_{N\to \infty}\lambda_\text{max}(FF^\transpose$. Since $\overline{\lambda}(G)<1$, we may define the following matrix valued function \cite{RMG:06}: 
\begin{align*}
A(\omega)&=\sum^{\infty}_{l=0}K(l)\Sigma_1^{1/2}e^{jk\omega} \quad \omega \in [0,2\pi],
\end{align*}
where $K(l)=CG^l\Pi$ and $j=\sqrt{-1}$. Since the coefficients $K(l)\Sigma_1^{1/2}$ are absolutely summable, for any $l \in \mathbb{N}$, these coefficients can also be recovered as \cite{RMG:06}: 
\begin{align*}
K(l)\Sigma_1^{1/2}=\frac{1}{2\pi}\int_{0}^{2\pi}A(\omega)e^{-jl\omega}d\omega. 
\end{align*}
Let $\overline z$ be the conjugate of $z \in \mathbb{C}$. Then, from \cite[Chapter~6.4]{BA-SB:99}, we have 
\begin{align*}
\lim\limits_{N\to \infty}\lambda^{1/2}_\text{max}(FF^\transpose)&\!=\!\underset{\omega \in [0, 2\pi]}{\text{ess sup }}\|A(\omega)\|_2\\
&\!=\!\underset{\omega \in [0, 2\pi]}{\text{ess sup }}\left\Vert C \left( \sum^{\infty}_{l=0}G^le^{jl\omega}\right)\Pi \Sigma_1^{1/2}\right\Vert_2\\
&\!=\!\underset{\omega \in [0, 2\pi]}{\text{ess sup }}\left\Vert C \left(I-Ge^{jw}\right)^{-1}\Pi \Sigma_1^{1/2}\right\Vert_2\\
&\!=\!\underset{\{z\in \mathbb{C}:|z|=1\}}{\text{ess sup }}\left\Vert C \left(\overline z I-G\right)^{-1}\Pi \Sigma_1^{1/2}\right\Vert_2\\
&\!\overset{(a)}{=}\!\underset{\{z\in \mathbb{C}:|z|=1\}}{\text{ess sup }}\left\Vert C \left(z I-G\right)^{-1}\Pi \Sigma_1^{1/2}\right\Vert_2\\
&\!=\!\underset{\{z\in \mathbb{C}:|z|=1\}}{\text{ess sup }}\left\Vert T(z)\Sigma_1^{\frac{1}{2}}\right\Vert_2=||T(z)\Sigma_1^{\frac{1}{2}}||_{\infty}. 
\end{align*}
where $(a)$ follows because, for any $A \in \mathbb{C}^{N\times N}$ with $A^*$ denoting its complex conjugate transpose, $\|A\|_2=\|A^\transpose\|_2=\|A^*\|_2$. Substituting $ \lim_{N\to \infty}\lambda^{1/2}_\text{max}(FF^\transpose)$ in \eqref{eq: limit wide hat R} gives us $R=1+\sigma^{-2}_v||T^*(z)||^2_{\infty}$. \QEDB

\medskip 
\noindent{\it Proof of Theorem \ref{thm: network_theoretic_result_noiseless}}		
Let ${\bf y}_\calP[k]$ and ${\bf y}_\calS[k]$ denote the measurements of associated with the sensor sets $\cal P$ and $\cal C$, respectively. Since $\sigma^2_v=0$, from \eqref{eq: subsystem}, we have
\begin{align}\label{eq: functional_dynamics}
{\bf y}_{\cal P}[k+1]=G_{pp}{\bf y}_{\cal P}[k]+B{\bf y}_{\mathcal{C}}[k], 
\end{align}
where $B=G_{pc}$. From \eqref{eq: functional_dynamics}, it follows that 
\begin{align*}
\begin{split}
\underbrace{\begin{bmatrix}
	{\bf y}_{\cal P}[1]\\
	{\bf y}_{\cal P}[2]\\
	\vdots\\
	{\bf y}_{\cal P}[N]
	\end{bmatrix}}_{{\bf Y}_{\cal P}}&=\underbrace{\begin{bmatrix}
	G_{pp} & B\\
	G^2_{pp} & G_{pp}B\\
	\vdots & \vdots \\
	G^{N}_{pp} & G^{N-1}_{pp}B 
	\end{bmatrix}}_{M}\underbrace{\begin{bmatrix}
	{\bf y}_\calP[0]\\
	{\bf y}_\calC[0]
	\end{bmatrix}}_{\widehat{\bf Y}[0]}\\
&+\underbrace{\begin{bmatrix}
	0 & 0 & \cdots & 0 & 0\\
	B & 0 & \cdots & 0 &0\\
	\vdots & \vdots & \ddots & \vdots & \vdots\\
	G^{N-2}_{pp}B & G^{N-3}_{pp}B  & \cdots & B &0
	\end{bmatrix}}_{\widehat M}\underbrace{\begin{bmatrix}
	{\bf y}_{\mathcal{C}} [1]\\
	{\bf y}_{\mathcal{C}} [2]\\
	\vdots\\
	{\bf y}_{\mathcal{C}} [N]
	\end{bmatrix}}_{{\bf Y}_{\calC}}.
\end{split}
\end{align*}
Since $\widehat{\bf Y}[0]$ is independent of $H_i$, the assertion of the theorem follows from Lemma \ref{lma: dependent_measurements_poe}. \QEDB 

\medskip 
\noindent{\it Proof of Lemma \ref{lma: dependent_measurements_poe}} We shall prove the result assuming that ${{\bf Y}}$ and ${\bf Z}=g({\bf Y})+{\bf v}$ admits density functions. With the expense of notation, the given proof can be adapted to handle random variables that do not have densities. Let ${\bf L}=[{\bf Y}^\transpose, {\bf Z}^\transpose]^\transpose$. Consider the following log likelihood ratio (LR) based on ${\bf L}$: 
\begin{align*}
\frac{f(l|M_2)}{f(l|M_1)} &= \frac{f(y, g(y)+v|M_2)}{f(y, g(y)+v|M_1)}\\
& = \frac{f(y, g(y)+v|y,M_2)f(y|M_2)}{f(y, g(y)+v|y,M_1)f(y|M_1)}\\
& \overset{(a)}{=}\frac{f(y, g(y)+v|y)f(y|M_2)}{f(y, g(y)+v|y)f(y|M_1)}= \frac{f(y|M_2)}{f(y|M_1)}, 
\end{align*}
where (a) follows because ${\bf v}$ is independent of $M_i$. Since LRs of ${\bf L}$ and ${\bf Y}$ are equal, the error probabilities associated with their MAP rules should be the same. Instead, the error probability of the MAP rule based on ${\bf L}$ is always superior to that of ${\bf Y}$ or ${\bf Z}$ alone. Thus $\delta_1\leq \delta_2$.  \QEDB

\smallskip 
\noindent{\it Proof of Theorem \ref{thm: network_theoretic_result_noisy}} 
Consider the following deterministic analogue of \eqref{eq: system}: ${\bf x}[k+1]=G{\bf x}[k]+\Pi{\bf u}$, where ${\bf u}$ is arbitrary. Recall that
${\bf x}_p[k+1]=G_{pp}{\bf x}_p[k]+G_{pc}{\bf x}_c[k]$ \eqref{eq: subsystem}. Since ${\bf x}[0]=0$, for $z\notin \text{spec}(G)\cup \text{spec}(G_{pp})$, we have 
\begin{subequations}
	\begin{equation}
	{\bf x}[z]=(zI-G)^{-1}\Pi {\bf u}\text{ and }\label{eq: x[z]}
	\end{equation}
	\begin{equation}
	{\bf x}_p[z]=(zI-G_{pp})^{-1}G_{pc}{\bf x}_c[z]=T_s(z){\bf x}_c[z]\label{eq: x_p[z]}. 
	\end{equation}
\end{subequations}
From \eqref{eq: x_p[z]}, the following inequalities are obvious 
\begin{align}
\underline \rho(z) \|{\bf x}_c[z]\|_2\leq \|{\bf x}_p[z]\|_2\leq \overline \rho(z) \|{\bf x}_c[z]\|_2. \label{eq: x_p[z] inequality}
\end{align}
Let $C_1$ and $C_2$ be the sensor matrices associated with $\cal C$ and $\cal P$, respectively. Then,
\begin{align}\label{eq: xc xp relation with x}
{\bf x}_c[z]=C_1{\bf x}[z] \text{ and } {\bf x}_p[z]=C_1{\bf x}[z]. 
\end{align}

\smallskip 
{\bf part 1)} We now consider the cases 1a) and 1b). Let $L_i=C_i(I-G)^{-1}\Pi \Sigma_c^{\frac{1}{2}}$, where $\Sigma_c=\Sigma_c^{\frac{1}{2}}\Sigma_c^{\frac{1}{2}}$ is defined in Lemma \ref{lemma: MAP detector}. Let $z=1$. Then, from \eqref{eq: xc xp relation with x} note that 
\begin{align*}
\begin{split}
\|{\bf x}_c[1]\|_2^2&=\|C_1{\bf x}[1]\|_2^2={\bf u}^\transpose L_1^\transpose L_1{\bf u} \text{ and }\\
\|{\bf x}_p[1]\|_2^2&=\|C_2{\bf x}[1]\|_2^2={\bf u}^\transpose L_2^\transpose L_2{\bf u}. 
\end{split}
\end{align*}
From \eqref{eq: x_p[z] inequality} and above identities, it follows that 
\begin{align}\label{eq: pd inequalites}
\begin{split}
\overline \rho(1)<1  &\implies L_1^\transpose L_1+\sigma^2_vI\succ L_2^\transpose L_2+\sigma^2_vI \text{ and}\\
\underline \rho(1)>1 & \implies L_2^\transpose L_2+\sigma^2_vI\succ L_1^\transpose L_1+\sigma^2_vI. 
\end{split}
\end{align}
Let ${\bf u}=\widetilde{\bs \mu}_{\Delta}$, where $\widetilde{\bs \mu}_{\Delta}$ is defined in the statement of Lemma \ref{lma: special cases error probability}. Let $\eta_1$ and $\eta_2$ be the SNRs of $\mbP_{e_m}(\cal C)$ and $\mbP_{e_m}(\cal P)$, respectively. Then from \eqref{eq: eta asymp}, we have 
\begin{align*}
\eta^2_i&=N\widetilde {\bs \mu}_{\Delta}^\transpose\left(  [ L_i^\transpose L_i+\sigma^2_vI] ^{-1}L_i^\transpose L_i\right)\widetilde{ \bs \mu}_{\Delta}. 
\end{align*}
Using the identity $[ L_i^\transpose L_i+\sigma^2_vI]^{-1}L_i^\transpose L_i=I-\sigma^2_v[ L_i^\transpose L_i+\sigma^2_vI]^{-1}$, we can also express $\eta^2_i$ as 
\begin{align}\label{eq: eta_i}
\eta^2_i=\widetilde {\bs \mu}_{\Delta}^\transpose \widetilde {\bs \mu}_{\Delta}-
\sigma^2_v\,\widetilde {\bs \mu}_{\Delta}^\transpose\left[L_i^\transpose L_i+\sigma^2I\right]^{-1}\widetilde {\bs \mu}_{\Delta}^\transpose. 
\end{align}
Finally, from \eqref{eq: eta_i} and \eqref{eq: pd inequalites}, and Proposition \ref{prop: error probability vs eta and R}, we have
\begin{align*}
\overline \rho(1)&<1 \implies \eta^2_1\geq \eta^2_2\implies \mbP_{e_m}({\cal C}_d)\!\leq\!\mbP_{e_m}({\cal P}) \text{ and}\\
\underline \rho(1)&>1 \implies \eta^2_1\leq \eta^2_2\implies \mbP_{e_m}({\cal C}_d)\!\geq\!\mbP_{e_m}({\cal P}). 
\end{align*}

\smallskip 
{\bf part 2)} We now consider the cases 2a) and 2b). Let $T_i(z)=C_i(zI-G)^{-1}$. Let ${\bf u}=\Sigma_1^{1/2}{\bf d}$, where $\Sigma_1^{1/2}$ is defined in the statement of Lemma \ref{lma: special cases error probability}. From \eqref{eq: xc xp relation with x} and \eqref{eq: x[z]}, we have ${\bf x}_c[k]=T_1(z)\Sigma_1^{1/2}{\bf d}$ and ${\bf x}_c[k]=T_2(z)\Sigma_1^{1/2}{\bf d}$. By invoking these two facts in \eqref{eq: x_p[z] inequality}, we may now conclude that 
\begin{align*}
\sup_{|z|=1}\overline\rho(z)<1 &\implies \|T_2(z)\Sigma_1^{\frac{1}{2}}{\bf d}\|_2\leq \|T_1(z)\Sigma_1^{\frac{1}{2}}{\bf d}\|_2 \text{ and }\\
\inf_{|z|=1}\underline\rho(z)>1 &\implies \|T_2(z)\Sigma_1^{\frac{1}{2}}{\bf d}\|_2\geq \|T_1(z)\Sigma_1^{\frac{1}{2}}{\bf d}\|_2, 
\end{align*}
for all $z$ that satisfies $|z|=1$. Let $R_1$ and $R_2$ be the SNRs of $\mbP_{e_v}(\cal C)$ and $\mbP_{e_v}(\cal P)$, respectively. Then, from \eqref{eq: Rb asymp} 
\begin{align*}
R_i-1=\frac{\|T_i(z)\Sigma_1^{\frac{1}{2}}\|^2_\infty}{\sigma^2_v}=\left( \underset{\{z\in \mathbb{C}:|z|=1\}}{\text{ess sup }}\| T_2(z)\Sigma_1^{1/2}{\bf d}\|_2\right)^2. 
\end{align*}
From Proposition \ref{prop: error probability vs eta and R}, it follows that 
\begin{align*}
\sup_{|z|=1}\overline\rho(z)<1 & \implies R_1\geq R_2\implies \mbP_{e_v}({\cal C}_d)\!\leq\!\mbP_{e_v}({\cal P}) \text{ and}\\
\inf_{|z|=1}\underline\rho(z)>1 & \implies R_1\leq R_2\implies \mbP_{e_v}({\cal C}_d)\!\geq\!\mbP_{e_v}({\cal P}).  \quad \QEDB 
\end{align*}

\smallskip 
\noindent{\it Proof of Corollary \ref{lma: non-negative matrix}} We shall prove part 1) of the corollary, and part 2) can be derived using similar analysis (the details are omitted). The idea of the proof is to show that $||\widetilde G||_\infty\leq 1/\sqrt{m} \implies \overline \rho(1)<1$, and there upon invoking Theorem \ref{thm: network_theoretic_result_noisy} yields the desired assertion. 

\smallskip 
{\bf step 1)} For $G\geq 0$, it follows that $\sup_{|z|=1}\overline\rho(z)=\overline \rho(1)$, where $\overline \rho(z)$ is $\|(zI-G_{pp})^{-1}G_{pc}\|_2$. To see this, note the following: For any ${\bf d} \in {\mathbb C}^{n_1}$, let $|{\bf d}|=(|d_1|, \ldots, |d_{n_1}|)^\transpose$. Then, for any $l \in {\mathbb N}$ and $z$ that satisfies $|z|=1$, we have 
\begin{align*}
|(\overline{z}G_{pp})^lG_{pc}{\bf d}|=|(G_{pp})^lG_{pc}{\bf d}|\leq (G_{pp})^lG_{pc}|{\bf d}|, 
\end{align*}
where the inequality, to be understood coordinate wise, follows because $[G_{pp} \, G_{pc}]\geq 0$. From the above inequality, and the fact $|{\bf y}+{\bf z}|\leq |{\bf y}|+|{\bf z}|$ for any ${\bf x}, {\bf y} \in {\mathbb{C}}^{p}$, we have 
\begin{align*}
\left\vert\sum_{l=0}^{\infty}(\overline{z}G_{pp})^lG_{pc}{\bf d}\right\vert\leq \sum_{l=0}^{\infty}|(\overline{z}G_{pp})^lG_{pc}{\bf d}|\leq \sum_{l=0}^{\infty}(G_{pp})^lG_{pc}|{\bf d}|. 
\end{align*}
Since $G_{pp}$ is a submatrix of $G\geq$, which is a non-negative matrix, we have $|\lambda_\text{max}(\overline zG_{pp})|=|\lambda_\text{max}(G_{pp})|\leq |\lambda_\text{max}(G)|<1$ \cite{AB-PRJ:94}, the above inequality can also be expressed as 
\begin{align*}
\left\vert(I-\overline z G_{pp})^{-1}G_{pc}{\bf d}\right\vert\leq (I-G_{pp})^{-1}G_{pc}\left\vert{\bf d}\right\vert.
\end{align*}
Taking $2$-norm on both sides of the inequality yields us: 
\begin{align*}
\left\Vert \, \left\vert(I-\overline z G_{pp})^{-1}G_{pc}{\bf d}\right\vert\, \right\Vert_2 \leq \left\Vert \, (I-G_{pp})^{-1}G_{pc}\left\vert{\bf d}\right\vert\, \right\Vert_2. 
\end{align*}
Since the above inequality holds for any vector ${\bf d} \in {\bf R}^{n_1}$, using the identity $\|\,|{\bf x}|\,\|_2=\|\,{\bf x}\,\|_2$ for any ${\bf x} \in {\mathbb{C}}^{p}$, the following inequality is now obvious: 
\begin{align*}
\sup_{|z|=1}\sup_{\|{\bf d}\|_2=1}\|(I-\overline z G_{pp})^{-1}G_{pc}{\bf d}\|_2\leq \left\Vert \, (I-G_{pp})^{-1}G_{pc} \, \right\Vert_2, 
\end{align*}
which can be expressed as $\sup_{|z|=1}\overline \rho(z) \leq \overline \rho(1)$. The equality is attained at $z=1$. 

\smallskip 
{\bf step 2)} Since $\sup_{|z|=1}\overline\rho(z)=\overline \rho(1)$, from Theorem \ref{thm: network_theoretic_result_noisy} it readily follows that, both 
$\mbP_{e_m}({\cal C}_d)\!\leq\!\mbP_{e_m}({\cal P})$ and $\mbP_{e_v}({\cal C}_d)\!\leq\!\mbP_{e_v}({\cal P})$ holds true whenever $\overline \rho(1)<1$. We now show that 
$\|\widetilde G\|_{\infty}=\|[G_{pp}\, G_{pc}]\|_{\infty}<1/\sqrt{m_1}$ guarantees $\rho(1)<1$. Let ${\bf 1}$ denote the all ones vector, and note the following identity: 
\begin{align}\label{eq: block matrix ones trick}
\begin{bmatrix}
G_{pp} & G_{pc}\\
0 & I
\end{bmatrix}^k\begin{bmatrix}
{\bf 1}\\
{\bf 1}
\end{bmatrix}=\begin{bmatrix}
G_{pp}^k & \sum_{l=0}^{k-1}G_{pp}^{l}G_{pc}\\
0 & I
\end{bmatrix}^k\begin{bmatrix}
{\bf 1}\\
{\bf 1}
\end{bmatrix}. 
\end{align}
\smallskip 
Since $\|[G_{pp}\,\, G_{pc}]\|_{\infty}<1/\sqrt{m_1}$, for any $k \in {\mathbb N}$, we also have  
\begin{align*}
\begin{bmatrix}
G_{pp} & G_{pc}\\
0 & I
\end{bmatrix}^k\begin{bmatrix}
{\bf 1}\\
{\bf 1}
\end{bmatrix}\leq \begin{bmatrix}
\frac{1}{\sqrt{m_1}}{\bf 1}\\
{\bf 1}
\end{bmatrix}. 
\end{align*}
From the above inequality and \eqref{eq: block matrix ones trick}, it follows that 
\begin{align*}
\lim\limits_{k \to \infty}\begin{bmatrix}
G_{pp}^k & \sum_{l=0}^{k-1}G_{pp}^{l}G_{pc}\\
0 & I
\end{bmatrix}^k\begin{bmatrix}
{\bf 1}\\
{\bf 1}
\end{bmatrix}\leq \begin{bmatrix}
\frac{1}{\sqrt{m_1}}{\bf 1}\\
{\bf 1}
\end{bmatrix}. 
\end{align*}
Since $\overline{\lambda}(G_pp)<1$, as $k \to \infty$, it follows that $G^k_{pp}\to 0$ and $ \sum_{l=0}^{k-1}G_{pp}^{l}G_{pc} \to (I-G_{pp})^{-1}G_{pc}$. Thus $(I-G_{pp})^{-1}G_{pc}{\bf 1}=\|(I-G_{pp})^{-1}G_{pc}\|_\infty<1/\sqrt{m_1}$, 
and hence, $\overline \rho(1)=||(I-G_{pp})^{-1}G_{pc}||_2<\sqrt{m_1}||(I-G_{pp})^{-1}G_{pc}||_{\infty}<1$. \QEDB 

\smallskip 
\noindent{\it Proof of Proposition \ref{prop: error probability vs eta and R}} Since $Q_{\cal N}(x)$ is decreasing function of $x$, $\mbP_{e_m}(\cal J)$ \eqref{eq: P_em} is decreasing in SNR $\eta$, given by either \eqref{eq: eta asymp} or  \eqref{eq: identical eta asymp}. For $\mbP_{e_v}(\cal J)$ \eqref{eq: P_ev} note the following: first, observe that $R>1$ in both \eqref{eq: Rb asymp} and \eqref{eq: identical R asymp}. Thus
\begin{align}\label{eq: tau R derivatives}
\begin{split}
\frac{d\tau}{dR}&=\frac{\left( \frac{R-1}{R}\right) -\ln R}{(R-1)^2}<0, \text{ and }\\
\frac{d(\tau R)}{dR}&=\frac{\left( R-1\right) -\ln R}{(R-1)^2}>0. 
\end{split}
\end{align}
Hence, we conclude that $\tau$ is deceasing in $R$. Instead, $\tau R$ is increasing in $R$. From this observation and the fact that the $Q_{\chi^2}(1,z)=\text{Pr}[Z\geq z]$, where $Z\sim \chi^2(1)$, is decreasing in $z$, it follows that $\mbP_{e_v}(\cal J)$ is decreasing in $R$. \QEDB  

\smallskip 
\noindent{\it Proof of Proposition \ref{prop: top monontone}} From \eqref{eq: Toeplitz adjacency matrix}, and the fact that $1\leq q<j<\ldots<n$, where $\mathcal{C}_d=\{j\}$ and $\mathcal{P}=\{j+1,\ldots,n\}$, the row sums of $\widetilde G$ takes values in the set $\{a+c, a+b+c\}$. Let $|\overline{G}_{l,q}|=|(I-G)^{-1}_{l,q}|$. Using the principle of backward induction, we shall show that, when $\|\widetilde G\|_{\infty}=a+b+c<1$, $\{|\ol G_{lq}|\}_{l=q}^n$ is monotonically decreasing. The proof of part (ii) is left to the reader as an exercise. 

Let $\ta=1-a$, $\tb=-b$, $\tc=-c$. If $\ta\ne 0$, then $\ol G_{l,q}$ of $(I-G)^{-1}$ are given by the following expressions \cite{JWL:82}: 
\begin{align}\label{eq: G bar elements}
\ol G_{l,q}&=\frac{1}{\theta_n}\left\{\begin{array}{lr}
(-1)^{l+q}\tb^{q-l}\theta_{l-1}\phi_{q+1} & q\geq l\\
(-1)^{l+q}\tc^{l-q}\theta_{q-1}\phi_{l+1} & q<l
\end{array}\right.
\end{align}
where $l,q\in \{1,\ldots,n\}$, and $\theta_k$ and $\phi_k$ are governed by
\begin{align}\label{eq: toeplitz recursions}
\begin{split}
\theta_k&=\ta\theta_{k-1}-\tb\tc\theta_{k-2} \quad \text{for } k= 2,\ldots,n\\
\phi_k&=\ta\phi_{k+1}-\tb\tc\phi_{k+2} \quad \text{for } k= n-1,\ldots,1
\end{split}
\end{align}
where $\theta_0=1$, $\theta_1=\ta$, $\phi_n=\ta$, $\phi_{n+1}=1$ and $\theta_n=\text{det}(I-G)$. Let $\mathcal{L}=\{q+1,\ldots,n\}$. Then, for any $l \in \mathcal{L}\cup \{q\}$, 
\begin{align*}
|\ol G_{lq}|\triangleq |(I-G)^{-1}_{lq}|&=\left\vert\frac{\theta_{q-1}\widetilde c^{-q}}{\theta_n}\right\vert \left\vert\widetilde c^l\phi_{l+1}\right\vert, 
\end{align*}
Let $l \in \mathcal{L}$, and define  $\zeta(l)=|\ol G_{l,q}|/|\ol G_{l-1,q}|$. Since $\phi_{n+1}=1$ and $\phi_{n}=\widetilde a$, for $l=n$ (base step), it follows that 
\begin{align*}
\zeta(n)=\frac{|\ol G_{nq}|}{|\ol G_{n-1,q}|}=\frac{\left\vert\widetilde c^n\phi_{n+1}\right\vert}{\left\vert\widetilde c^{n-1}\phi_{n}\right\vert}=\frac{|\widetilde c|}{|\widetilde a|}=\frac{c}{1-a}\overset{(i)}{<}1, 
\end{align*}
where $(i)$ follows because $a,b,c>0$, and $a+b+c<1$. Let $q<l<n$ and $\zeta(l+1)<1$ (inductive step). Then, 
\begin{align*}
\zeta(l)=\frac{|\ol G_{l,q}|}{|\ol G_{l-1,q}|}&=\frac{|\widetilde c|\left\vert\phi_{l+1}\right\vert}{\left\vert\phi_{l}\right\vert}\overset{\eqref{eq: toeplitz recursions}}{=}\frac{c}{\left\vert \widetilde a-\widetilde b \widetilde c\left( \frac{\phi_{l+2}}{\phi_{l+1}}\right) \right\vert}<1, 
\end{align*}
To see the last inequality, consider the following: 
\begin{align*}
b+c < 1-a &\overset{(ii)}{\implies} b\left(\frac{|\widetilde c| |\phi_{l+2}|}{|\phi_{l+1}|}\right) +c<1-a \\
&\implies b\left(\frac{c \phi_{l+2}}{\phi_{l+1}}\right) +c<1-a \\
&\implies\frac{c}{\left\vert (1-a)-bc\left( \frac{\phi_{k+2}}{\phi_{k+1}}\right) \right\vert}<1\\
& \overset{(iii)}{\implies}\frac{|\widetilde c|}{\left\vert \widetilde a-\widetilde b \widetilde c\left( \frac{\phi_{l+2}}{\phi_{l+1}}\right) \right\vert}<1, 
\end{align*}
where $(ii)$ follows because the hypothesis $\zeta(l+1)<1$ implies that  $|\widetilde c| (|\phi_{l+2}|/|\phi_{l+1}|)<1$, and $(iii)$ from the fact that 
$\widetilde a=1-a$, $\widetilde b=-b$, and $\widetilde c=-c$. From the principle of finite induction, for all $l \in \mathcal{L}$, we have $\zeta(l)<1$. Hence, $\{|\ol G_{lq}|\}_{l=q}^n$ is a decreasing sequence. \QEDB 

\smallskip 
\begin{proposition}\label{prop: eta and R versus N}
	Let $\widehat \eta^2=\ol{\bs \mu}_\Delta^\transpose{\ol \Sigma}_c^{-1}\ol{\bs \mu}_\Delta$, $\widehat R=\lambda_\text{max}(\ol \Sigma_1 \ol \Sigma^{-1}_2)$ and $\widehat \tau =\ln(\widehat R)/(\widehat R-1)$, where ($\overline {\bs \mu}_{\Delta}$, $\ol\Sigma_c$, $\ol\Sigma_1$, $\ol\Sigma_2$) are defined in the statement of Lemma \ref{lemma: MAP detector}.
	Then, $\widehat \eta$, $\widehat R$, and $\tau$ are increasing in $N$. However, $\widehat \tau\widehat R$ is decreasing in $N$. 
\end{proposition}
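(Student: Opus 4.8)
The plan is to treat the three listed quantities separately, since each reduces to a monotonicity claim in $N$, and to read off the directions for $\tau$ and $\widehat\tau\widehat R$ by composing a scalar monotonicity with that of $\widehat R$. Starting with $\widehat\eta$, I would use the representation $\widehat\eta^2=(\mathcal F{\bf m})^\transpose\overline\Sigma_c^{-1}(\mathcal F{\bf m})$ from \eqref{eq: eta_proof} and show it cannot decrease when the horizon is extended from $N$ to $N+1$. The most transparent route is detection-theoretic: since $\widehat\eta$ is (up to a fixed constant) the SNR of the mean-shift MAP detector's sufficient statistic, augmenting the measurement record ${\bf Y}_{\cal J}$ with the extra block ${\bf y}_{\cal J}[N+1]$ can only improve the detector by Lemma \ref{lma: dependent_measurements_poe}, and hence can only increase $\widehat\eta$. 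Equivalently, a Schur-complement comparison of the horizon-$N$ and horizon-$(N+1)$ information matrices $\mathcal F^\transpose\overline\Sigma_c^{-1}\mathcal F$ yields the same conclusion.

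Next, for $\widehat R=\lambda_{\max}(\overline\Sigma_1\overline\Sigma_2^{-1})$, I would use $\overline\Sigma_2=\sigma^2_vI$ and $\overline\Sigma_1=FF^\transpose+\sigma^2_vI$ with $F=\mathcal F(I_N\otimes\Sigma_1^{1/2})$, exactly as in the proof of Lemma \ref{lma: special cases error probability}, so that $\widehat R=1+\sigma^{-2}_v\,\sigma_{\max}(F)^2$. Because the convolution structure of $\mathcal F$ embeds the horizon-$N$ matrix $F$ as the top-left principal submatrix of the horizon-$(N+1)$ one, its largest singular value cannot decrease; the variational identity $\sigma_{\max}(F)=\max_{\|u\|=\|w\|=1}w^\transpose F u$ taken over the enlarged feasible set makes this precise and shows that $\widehat R$ is increasing in $N$.

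For $\tau$ and $\widehat\tau\widehat R$, I would invoke the scalar monotonicity of the maps $R\mapsto\tau=\ln R/(R-1)$ and $R\mapsto\tau R$ supplied by the derivative-sign analysis in Proposition \ref{prop: error probability vs eta and R}, and compose it with the monotonicity of $\widehat R$ in $N$ established above. Reading off the composed directions then delivers the claimed behavior, namely that $\tau$ is increasing and $\widehat\tau\widehat R$ is decreasing in $N$; since these two quantities are exactly the arguments of $Q_{\chi^2}(1,\cdot)$ in \eqref{eq: P_ev}, this is precisely the information needed to justify the limit interchange used in the proof of Lemma \ref{lma: special cases error probability}.

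The main obstacle is the change of dimension: none of the four claims concerns a fixed matrix, since $\overline\Sigma_c$, $\overline\Sigma_1$, and $\overline\Sigma_2$ all grow in size as $N$ increases. Each comparison between the horizon-$N$ and horizon-$(N+1)$ objects therefore has to be framed as an embedding/interlacing argument (for $\widehat R$) or as an information-augmentation argument (for $\widehat\eta$), rather than as an inequality between two operators on a common space. Verifying that the appended rows and columns contribute nonnegatively, and that the principal-submatrix embedding is the right one, is where the real work lies; once that bookkeeping is in place, the scalar composition step for $\tau$ and $\widehat\tau\widehat R$ is routine.
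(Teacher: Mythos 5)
Your overall architecture matches the paper's proof: an information-augmentation argument for $\widehat\eta$, spectral monotonicity of $\widehat R$ in $N$, and composition with the scalar derivative signs of \eqref{eq: tau R derivatives} for the last two quantities. Two of your variants are fine and even attractive. Deducing $\widehat\eta_{N+1}\geq \widehat\eta_{N}$ from Lemma \ref{lma: dependent_measurements_poe} together with the exact finite-$N$ formula $\widehat{\mathbb{P}}_{e_m}({\cal J})=0.5\,Q_{\mathcal N}(0.5\,\widehat\eta)$ and strict monotonicity of $Q_{\mathcal N}$ is cleaner than the paper's explicit block-partition and Schur-complement computation of $\ol\Sigma_{N_2}^{-1}$ (which is needed precisely because $\ol{\bs\mu}_\Delta$ and $\ol\Sigma_c$ both grow with $N$, an obstacle you correctly flag). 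Your leading-submatrix argument $\sigma_{\max}(F_N)\leq\sigma_{\max}(F_{N+1})$ is more elementary than the paper's appeal to eigenvalue interlacing for symmetric matrix pencils, but it only covers the case $\ol\Sigma_2=\sigma_v^2 I$ and $\ol\Sigma_1=FF^\transpose+\sigma_v^2I$ (i.e., $\Sigma_2=0$, ${\bf x}[0]=0$), whereas the proposition is stated for the general $\ol\Sigma_1,\ol\Sigma_2$ of Lemma \ref{lemma: MAP detector}; since the proposition is only invoked inside the proof of Lemma \ref{lma: special cases error probability}, where Assumption \ref{assump: error probability assumption} is in force, this restriction is tolerable but should be stated explicitly.

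The genuine error is in your final composition step. From \eqref{eq: tau R derivatives} you have $d\tau/dR<0$ and $d(\tau R)/dR>0$ for $R>1$ (indeed $1-1/R<\ln R<R-1$). Composing with $\widehat R$ increasing in $N$ therefore yields $\widehat\tau$ \emph{decreasing} and $\widehat\tau\widehat R$ \emph{increasing} in $N$ --- the opposite of what you ``read off'' ($\tau$ increasing, $\widehat\tau\widehat R$ decreasing). You have faithfully reproduced the directions printed in the proposition statement, but those directions are a typo in the statement itself: the paper's own proof concludes that $\widehat\tau$ and $\widehat\tau\widehat R$ are decreasing and increasing in $N$, respectively, and this is exactly how the proposition is used in the proof of Lemma \ref{lma: special cases error probability}, where the monotone limits $\widehat\tau\downarrow\tau$ and $\widehat\tau\widehat R\uparrow\tau R$ justify passing the limit inside $Q_{\chi^2}(1,\cdot)$. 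As written, your proposal is internally inconsistent --- the derivative signs you cite contradict the conclusion you assert --- so the fix is to state the correct directions (and flag the typo in the statement), not to force the composition to match the printed claim.
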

\begin{proof}
	Let $N<\infty$. Then, from Proposition \ref{prop: moments of measurements}, we have $\ol{\bs \mu}_\Delta=\mathbb{E}\left[{\bf Y}_{\cal J}|H_2\right]-\mathbb{E}\left[{\bf Y}_{\cal J}|H_1\right]$, $\ol \Sigma_c=\text{Cov}[{\bf Y}_{\cal J}|H_1]=\text{Cov}[{\bf Y}_{\cal J}|H_2]$. For clarity, we drop the existing subscripts and replace them with the total number of measurements. Let $N_2=N_1+k$, $k \in \mathbb{N}$, and consider ${\bf Y}_{N_2}^\transpose=\begin{bmatrix}{\bf Y}_{N_1}^\transpose,\, {\bf Z}^\transpose_{k} \end{bmatrix}$, where ${\bf Z}_{k}$ are the measurements collected after $N_1$. Then, 
	\begin{align*}
	\ol{\bs \mu}_{N_2}\!=\!\renewcommand\arraystretch{1.1}
	\left[
	\begin{array}{c}
	\ol{\bs \mu}_{N_1}\\
	\hline
	{\bf m}_k
	\end{array}
	\right] \text{ and } \ol\Sigma_{N_2}\!=\!\renewcommand\arraystretch{1.1}
	\left[
	\begin{array}{c|c}
	\ol\Sigma_{N_1} & D \\
	\hline
	D^\transpose & M
	\end{array}
	\right],
	\end{align*} 
	where ${\bf m}_k=\mathbb{E}[{\bf Z}_k|H_2]-\mathbb{E}[{\bf Z}_k|H_1]$, $M=\text{Cov}[{\bf Z}_k|H_1]>0$,  and $D\!=\!\text{Cov}[{\bf Y}_k, {\bf Z}_k|H_1]$. Further, using the Schur complement, $\ol\Sigma_{N_2}^{-1}$ can be expressed as 
	\begin{align*}
	\ol\Sigma_{N_2}^{-1}\!=\!\renewcommand\arraystretch{1.1}
	\left[
	\begin{array}{c|c}
	\ol\Sigma_{N_1} & D \\
	\hline
	D^\transpose & M
	\end{array}
	\right]^{-1}=\left[
	\begin{array}{c|c}
	\ol\Sigma_{N_1}^{-1} & 0 \\
	\hline
	0^\transpose & 0
	\end{array}
	\right]+\underbrace{F}_{>0}. 
	\end{align*}
	From the above identity, it follows that
	\begin{align*}
	\widehat \eta_{N_2}=\left( \ol{\bs \mu}_{N_2}^\transpose\ol\Sigma^{-1}_{N_2}\ol{\bs \mu}_{N_2}\right)^{\frac{1}{2}}&=\left(\ol{\bs \mu}_{N_1}^\transpose\ol\Sigma^{-1}_{N_1}\ol{\bs \mu}_{N_1}+\ol{\bs \mu}_{N_2}^\transpose F \ol{\bs \mu}_{N_2}\right)^{\frac{1}{2}}\\
	& \geq \left( \ol{\bs \mu}_{N_1}^\transpose\ol\Sigma^{-1}_{N_1}\ol{\bs \mu}_{N_1}\right)=\widehat \eta_{N_1}. 
	\end{align*}
	Hence, we may conclude that $\widehat \eta$ is increasing in $N$. Instead, from the eigenvalue interlacing property for the symmetric matrix pencils \cite{R:15}, it follows that $\widehat R=\lambda_\text{max}(\ol \Sigma_1\ol\Sigma_2^{-1})$ is increasing in $N$. Finally, from \eqref{eq: tau R derivatives}, it follows that $\widehat \tau$ and $\widehat \tau \widehat R$ are decreasing and increasing in $N$, respectively. 
\end{proof}

\bibliographystyle{elsarticle-harv}
\bibliography{BIB}

\begin{thebibliography}{10}

\bibitem{FP-FD-FB:10y}
F.~Pasqualetti, F.~D{\"o}rfler, and F.~Bullo.
\newblock Attack detection and identification in cyber-physical systems.
\newblock {\em IEEE Transactions on Automatic Control}, 58(11):2715--2729, Nov
  2013.

\bibitem{HSS-DR-TE-VP-JQ:19}
H.S. S{\'a}nchez, D.~Rotondo, T.~Escobet, V.~Puig, and J.~Quevedo.
\newblock Bibliographical review on cyber attacks from a control oriented
  perspective.
\newblock {\em Annual Reviews in Control}, 48:103 -- 128, 2019.

\bibitem{SR-JAT-MX:16}
S.~{Roy}, J.~{Abed Torres}, and M.~{Xue}.
\newblock Sensor and actuator placement for zero-shaping in dynamical networks.
\newblock In {\em 2016 IEEE 55th Conference on Decision and Control (CDC)},
  pages 1745--1750, Dec 2016.

\bibitem{LY-SR-SS:18}
L.~{Ye}, S.~{Roy}, and S.~{Sundaram}.
\newblock On the complexity and approximability of optimal sensor selection for
  kalman filtering.
\newblock In {\em 2018 Annual American Control Conference (ACC)}, pages
  5049--5054, June 2018.

\bibitem{RD-JAT-SR:15}
R.~Dhal, J.~Abad Torres, and S.~Roy.
\newblock Detecting link failures in complex network processes using remote
  monitoring.
\newblock {\em Physica A: Statistical Mechanics and its Applications}, 437:36
  -- 54, 2015.

\bibitem{RA-RD-SR-FP:16}
R.~{Anguluri}, R.~{Dhal}, S.~{Roy}, and F.~{Pasqualetti}.
\newblock Network invariants for optimal input detection.
\newblock In {\em 2016 American Control Conference (ACC)}, pages 3776--3781,
  July 2016.

\bibitem{SR-MX-SS:18}
S.~{Roy}, M.~{Xue}, and S.~{Sundaram}.
\newblock Graph-theoretic analysis of estimators for stochastically-driven
  diffusive network processes.
\newblock In {\em 2018 Annual American Control Conference (ACC)}, pages
  1796--1801, June 2018.

\bibitem{LY-SJ-BA:13}
Y.~Liu, J.~J. Slotine, and A.~L. Barab{\'a}si.
\newblock Observability of complex systems.
\newblock {\em Proceedings of the National Academy of Sciences},
  110(7):2460--2465, 2013.

\bibitem{AV-CJ-MX-SR-SW:19}
A.~Vosughi, C.~Johnson, M.~Xue, S.~Roy, and S.~Warnick.
\newblock Target control and source estimation metrics for dynamical networks.
\newblock {\em Automatica}, 100:412--416, 2019.

\bibitem{THS-FLC-JL:16}
T.~H. {Summers}, F.~L. {Cortesi}, and J.~{Lygeros}.
\newblock On submodularity and controllability in complex dynamical networks.
\newblock {\em IEEE Transactions on Control of Network Systems}, 3(1):91--101,
  March 2016.

\bibitem{HZ-RA-SS:17}
H.~{Zhang}, R.~{Ayoub}, and S.~{Sundaram}.
\newblock Sensor selection for kalman filtering of linear dynamical systems:
  Complexity, limitations and greedy algorithms.
\newblock {\em Automatica}, 78:202--210, 2017.

\bibitem{JAT-SR-YW:17}
J.~{Abad Torres}, S.~{Roy}, and Y.~{Wan}.
\newblock Sparse resource allocation for linear network spread dynamics.
\newblock {\em IEEE Transactions on Automatic Control}, 62(4):1714--1728, April
  2017.

\bibitem{FP-SZ-FB:14}
F.~{Pasqualetti}, S.~{Zampieri}, and F.~{Bullo}.
\newblock Controllability metrics, limitations and algorithms for complex
  networks.
\newblock {\em IEEE Transactions on Control of Network Systems}, 1(1):40--52,
  March 2014.

\bibitem{SZ-FP:17}
S.~Zhao and F.~Pasqualetti.
\newblock Discrete-time dynamical networks with diagonal controllability
  gramian.
\newblock {\em IFAC-PapersOnLine"}, 50(1):8297--8302, 2017.
\newblock 20th IFAC World Congress.

\bibitem{HJV-MK-HL:17}
H.~J. {Van Waarde}, M.~K. {Camlibel}, and H.~L. {Trentelman}.
\newblock A distance-based approach to strong target control of dynamical
  networks.
\newblock {\em IEEE Transactions on Automatic Control}, 62(12):6266--6277, Dec
  2017.

\bibitem{GI-KMA-GS:93}
I.~Gohberg, A.~M. Kaashoek, and S.~Goldberg.
\newblock Block toeplitz operators.
\newblock In {\em Classes of Linear Operators Vol. II}. Birkh{\"a}user Basel,
  1993.

\bibitem{JA-SR:15}
J.~Abed Torres and S.~Roy.
\newblock Graph-theoretic analysis of network input-output processes: Zero
  structure and its implications on remote feedback control.
\newblock {\em Automatica}, 61:73--79, 2015.

\bibitem{HLV:04}
H.~L.~Van Trees.
\newblock {\em Detection, Estimation, and Modulation Theory. Part I:}.
\newblock Wiley Press, 2004.

\bibitem{LLS:91}
L.~L. Scharf.
\newblock {\em Statistical Signal Processing}.
\newblock Reading, {MA}: Addition-Wesley, 1991.

\bibitem{RD-PH-DS:00}
R.~Duda, P.~Hart, and D.~Stork.
\newblock {\em Pattern Classification}.
\newblock New York: Wiley, 2000.

\bibitem{YB-VUR-TK:88}
Y.~Bresler, V.~U. Reddy, and T.~Kailath.
\newblock Optimum beamforming for coherent signal and interferences.
\newblock {\em IEEE Trans. on Acou., Speech, and Signal Process.},
  36(6):833--843, June 1988.

\bibitem{JM-IM:65}
J.~M. Wozencraft and I.~M. Jacobs.
\newblock {\em Principles of Communication Engineering}.
\newblock New York: Wiley, 1965.

\bibitem{JM-DC:78}
J.~L. Melsa and D.~L. Cohn.
\newblock {\em Decision and Estimation Theory}.
\newblock McGraw-Hill, 1978.

\bibitem{JMV:75}
J.~M. Varah.
\newblock A lower bound for the smallest singular value of a matrix.
\newblock {\em Linear Algebra and its Applications}, 11(1):3--5, 1975.

\bibitem{PN-JP-DS-LM-VK-PV-DH:14}
N.~Perraudin, J.~Paratte, D.~Shuman, L.~Martin, V.~Kalofolias,
  P.~Vandergheynst, and D.~K. Hammond.
\newblock Gspbox: A toolbox for signal processing on graphs.
\newblock {\em ArXiv e-prints}, Aug. 2014.

\bibitem{RMG:06}
R.~M. Gray.
\newblock Toeplitz and circulant matrices: A review.
\newblock {\em Foundations and Trends\textsuperscript{\textregistered} in
  Communications and Information Theory}, 2(3):155--239, 2006.

\bibitem{BA-SB:99}
A.~{B{\"o}ttcher} and B.~{Silbermann}.
\newblock Block toeplitz matrices.
\newblock In {\em Introduction to Large Truncated Toeplitz Matrices}, pages
  185--219. Springer New York, 1999.

\bibitem{AB-PRJ:94}
A.~Berman and R.J. Plemmons.
\newblock {\em Nonnegative Matrices in the Mathematical Sciences}.
\newblock Society for Industrial and Applied Mathematics, 1994.

\bibitem{JWL:82}
J.~W. Lewis.
\newblock Inversion of tridiagonal matrices.
\newblock {\em Numerische Mathematik}, 38(3):333--345, Oct 1982.

\bibitem{R:15}
R.~C. Li.
\newblock Rayleigh quotient based optimization methods for eigenvalue problems.
\newblock In {\em Matrix Functions and Matrix Equations}, pages 76--108. World
  Scientific, 2015.

\end{thebibliography}

\end{document}